\theoremstyle{plain}
\newtheorem{theorem}{\bf Theorem}[section]
\newtheorem{proposition}[theorem]{\bf Proposition}
\newtheorem{lemma}[theorem]{\bf Lemma}
\newtheorem{corollary}[theorem]{\bf Corollary}
\theoremstyle{definition}
\newtheorem{example}[theorem]{\bf Example}
\newtheorem{examples}[theorem]{\bf Examples}
\newtheorem{remark}[theorem]{\bf Remark}
\newcommand{\N}{\mathbb N}
\newcommand{\Z}{\mathbb Z}
\newcommand{\R}{\mathbb R}
\newcommand{\Q}{\mathbb Q}
\DeclareMathOperator{\coker}{Coker}
\DeclareMathOperator{\spec}{spec} \DeclareMathOperator{\supp}{supp}
\DeclareMathOperator{\Pic}{Pic} 
\DeclareMathOperator{\id}{id}
\DeclareMathOperator{\Ker}{Ker}
\renewcommand{\time}{\negthinspace \times \negthinspace}
\newcommand{\DP}{\negthinspace : \negthinspace}
\newcommand{\red}{\text{\rm red}}
\newcommand{\eq}{\text{\rm eq}}
\newcommand{\adj}{\text{\rm adj}}
\newcommand{\monn}{\text{\rm mon}}
\newcommand{\FF}{\text{\rm FF}}
\newcommand{\mmod}{\negthickspace \mod}
\renewcommand{\t}{\, | \,}
\newcommand{\cls}[2]{\mathcal{C}(#1,#2)}
\newcommand{\quot}{\mathsf{q}}
\newcommand{\cat}{\mathsf{c}}
\newcommand{\ve}{\varepsilon}
\newcommand{\ZZ}{\mathsf{Z}}
\newcommand{\dd}{\mathsf{d}}
\newcommand{\wh}{\widehat}
\numberwithin{equation}{section}
\begin{document}

\title{Arithmetic of seminormal weakly Krull monoids and domains}

\address{Institut f\"ur Mathematik und Wissenschaftliches Rechnen\\
Karl--Fran\-zens--Universit\"at Graz, NAWI Graz\\
Heinrichstra{\ss}e 36\\
8010 Graz, Austria}

\email{alfred.geroldinger@uni-graz.at, florian.kainrath@uni-graz.at, andreas.reinhart@uni-graz.at}

\author{Alfred Geroldinger and Florian Kainrath and Andreas Reinhart}

\thanks{This work was supported by
the Austrian Science Fund FWF (Project Number P26036-N26).
\\}

\keywords{non-unique factorizations, sets of lengths, half-factoriality,  weakly Krull domains, seminormal domains, non-principal orders}

\subjclass[2010]{13A05, 13F05, 13F15, 13F45, 20M13}

\begin{abstract}
We study the arithmetic of seminormal $v$-noetherian weakly Krull monoids with nontrivial conductor which have finite class group and prime divisors in all classes. These monoids include seminormal  orders in holomorphy rings in global fields. The crucial property of seminormality allows us to give precise arithmetical results analogous to the well-known results for Krull monoids having finite class group and prime divisors in each class. This allows us to show, for example, that unions of sets of lengths are  intervals and to provide a characterization of half-factoriality.
\end{abstract}

\maketitle

\bigskip
\section{Introduction}
\bigskip

Let $R$ be a noetherian domain. Then every nonzero nonunit $a \in R$ can be written as a finite product of atoms (irreducible elements), say $a = u_1 \cdot \ldots \cdot u_k$. Such a product is called a factorization of $a$ in $R$. The main goal of factorization theory is to describe the various phenomena of non-uniqueness of factorizations by suitable arithmetical invariants (such as sets of lengths and unions of sets of lengths) and to study their relationship with classical ring-theoretical parameters of the underlying domain. Given an element $a \in R$, the set $\mathsf L (a)$ of all possible factorization lengths $k \in \N$ is called the set of lengths of $a$. Since $R$ is noetherian, $\mathsf L(a)$ is a finite subset of positive integers.  For $k \in \N$, let $\mathcal U_k (R)$ denote the set of all $l \in \N$ with the following property: There are atoms $u_i$ and $v_j$ for indices $i \in [1,k]$ and $j \in [1,l]$ such that $u_1 \cdot \ldots \cdot u_k = v_1 \cdot \ldots \cdot v_l$. Thus $\mathcal U_k (R)$ is the union of all sets of lengths containing $k$.
In particular, a domain $R$ is said to be half-factorial if $|\mathsf L (a)|=1$ for every nonzero nonunit $a \in R$ (equivalently, $\mathcal U_k (R) = \{k \}$ for all $k \in \N$). We note that if $R$ is not half-factorial, then there exists $a \in R$ with $|\mathsf L (a)| > 1$ and hence $|\mathsf L (a^N)| > N$ for each positive integer $N \in \N$. Throughout this manuscript we will study, for a seminormal weakly Krull domain $R$, sets of lengths $\mathsf L(a)$ where $a \in R$ and unions of sets of lengths $\mathcal U_k(R)$ where $k \in \mathbb N$.

\smallskip
Within factorization theory, there are two main cases. The first --- and by far the best understood --- case is that of Krull domains.  Recall that a noetherian domain is Krull if and only if it is integrally closed. Let $R$ be a Krull domain. Then arithmetical phenomena depend only on the class group and on the distribution of prime divisors in the classes.  In particular, $R$ is factorial if and only if the class group is trivial. Let $G$ denote the class group of $R$ and let $G_P \subset G$ denote the set of classes containing prime divisors. Then there is a transfer homomorphism from $R$ to the monoid $\mathcal B (G_P)$ of zero-sum sequences over $G_P$, which preserves sets of lengths and other invariants (see Section \ref{4}). This is the basis for a full variety of arithmetical finiteness results for Krull domains. If the class group is finite and every class contains a prime divisor, even more is known. Suppose this is the case; that is, $|G|<\infty$ and $G_P = G$. Then the natural transfer homomorphism provides  a perfect link to additive group and number theory. In particular, from this transfer homomorphism one is able to  establish not only finiteness results, but precise arithmetical results (see \cite[Chapter 6]{Ge-HK06a} and \cite{Ge09a} for an overview). We now demonstrate this by way of a few examples. The first such result --- indeed one of the first results in this area of factorization theory, due to Carlitz 1960 --- is a characterization of half-factoriality: the domain $R$ is half-factorial if and only if $|G| \le 2$. Secondly, it is known that  the unions of sets of lengths $\mathcal U_k(R)$ are not only finite  (which is true even in many non-Krull settings), but they are finite intervals. Precise values of other arithmetical invariants such as elasticity and the catenary degree have been determined using only  the structure of the class group $G$.

\smallskip
Much less is known in the non-Krull case. Suppose $R$ is noetherian but not integrally closed. Apart from certain classes of semigroup rings, the most investigated class is that of C-domains. Let $\overline R$ denote the integral closure and let $\mathfrak f = (R \DP \overline R)$ denote the conductor. If $\overline R$ is a finitely generated $R$-module and both $R/\mathfrak f$ and the class group $\mathcal C (\overline R)$ are finite, then $R$ is  a C-domain (for more examples see \cite{Re13a}). Here, a variety of finiteness results are known (see \cite{Ge-HK06a, Fo-Ge05, Fo-Ha06a}, and \cite{Ge-Ha08a, Ka14a} for generalizations).  However, even for the most simple C-domains (which are not Krull) there are more or less no precise arithmetical results. To give a striking example, consider a one-dimensional noetherian domain with finite Picard group and suppose that every class contains a prime ideal. For example, every non-principal order in an algebraic number field is such a domain. With no lack of effort, there has been no characterization of half-factoriality and no characterization of the structure of unions of sets of lengths for such a domain.

\smallskip
The study of seminormal commutative rings goes back to the work of Traverso and Swan (\cite{Tr70a, Sw80a}).
We refer to the survey of Vitulli \cite{Vi10a} and to the monographs \cite{Br-Gu09a, We13a}  for their role in  algebraic geometry and K-Theory.
The  theory of divisorial ideals of  Mori domains and monoids  was a central topic in multiplicative ideal theory over the past several  decades (see the classic text by Gilmer \cite{Gi92}, a survey by Barucci \cite{Ba00}, and  more recent  monographs  and proceedings \cite{HK98, Ge-HK06a, F-K-O-S11a}). Over the years, it has become evident that the seminormality condition  plays an important role in answering many of the posed algebraic questions (see for example \cite{Ba94}).

\smallskip
However, thus far the impact of the seminormality condition on the arithmetic of $R$ has not been studied, and this is the goal of the present paper.
We study the arithmetic of seminormal Mori domains and monoids which are  weakly Krull. These domains generalize Krull domains and include all seminormal one-dimensional noetherian domains (we will repeat the concept of weakly Krull domains at the beginning of Section \ref{5}, and provide a detailed discussion of examples in \ref{5.7}). We  derive several precise arithmetical results analogous to those known for Krull monoids with finite class group where each class contains a prime divisor, and demonstrate by examples that these results do not hold without the seminormality assumption.

\smallskip
In Section \ref{2} we gather together the arithmetical concepts  needed in the sequel. As is usual in factorization theory and in parts of multiplicative ideal theory, we proceed in  the setting of monoids. Apart from being more general, this procedure exhibits the purely multiplicative character of the theory. For example, a domain is a weakly Krull domain if and only if its multiplicative monoid is a weakly Krull monoid. Moreover, our main tools are those of transfer homomorphisms which allow us to shift problems from the original weakly Krull monoids to a simpler class of weakly Krull monoids which are easier to study. We note that even if one begins with a domain $R$, the transfer homomorphism is to a monoid and not to a domain; see Section \ref{4} and Lemma \ref{5.2}. In Section \ref{3} we study the local case as a preparation for the general (global) case of seminormal weakly Krull monoids which is handled in Section \ref{5}. Throughout, we develop in parallel the algebraic and the arithmetic properties. Our main results are stated in Theorem \ref{5.5} (algebraic structure), Theorem \ref{5.6} (arithmetic structure), and Theorem \ref{6.2} (characterization of half-factoriality). The relevance of the arithmetical results is discussed just before the formulation of Theorem \ref{5.6}. In Example \ref{5.12} we will show that    each of  the   statements in Theorem \ref{5.6} fails if one weakens some of the assumptions slightly, in particular if we drop the seminormality condition. Finally, we provide a new and state of the art outline for the study of half-factoriality at the beginning of Section \ref{6}.

\medskip
\section{Arithmetical preliminaries} \label{2}
\medskip

We denote by $\N$ the set of positive integers, and we put $\N_0 =
\N \cup \{0\}$.  For real numbers $a, b \in \R$, we
set $[a, b] = \{ x \in \Z \mid a \le x \le b \}$. Let $L, L' \subset
\Z$.  We denote by $L+L' = \{a+b \mid a \in L,\, b \in L' \}$ their
{\it sumset}. Two distinct elements $k, l \in L$ are called {\it
adjacent} if $L \cap [\min\{k,l\},\max\{k,l\}]=\{k,l\}$. A positive
integer $d \in \N$ is called a \ {\it distance} \ of $L$ \ if there
exist adjacent elements $k,l \in L$ with $d=|k-l|$, and we denote by $\Delta (L)$ the {\it set of  distances}.

In this preliminary section we gather the required  terminology from factorization theory. Our main reference is \cite{Ge-HK06a}.

\smallskip
\noindent {\bf Monoids and factorizations}. By a {\it monoid}, we
mean a commutative, cancelative semigroup with unit element.  Let
$H$ be a monoid. We denote by $\mathcal A (H)$ the set of atoms
(irreducible elements) of $H$, by $H^{\times}$ the group of
invertible elements, by $H_{\red} = H/H^{\times}= \{ a H^{\times} \mid a \in H \}$
the associated reduced monoid of $H$, and by $\mathsf q (H)$ the
quotient group of $H$. Each monoid homomorphism  $\varphi \colon H \to D$  induces a monoid homomorphism $\varphi_{\red} \colon H_{\red} \to D_{\red}$, defined by $\varphi (aH^{\times}) = a D^{\times}$ for all $a \in H$, and a group homomorphism $\mathsf q ( \varphi) \colon \mathsf q (H) \to \mathsf q (D)$  satisfying $\mathsf q (\varphi) \t H = \varphi$. If $D$ is a monoid and $H \subset D$ a submonoid, then $H \subset D$ is said to be {\it saturated} if $H = \mathsf q (H) \cap D$.
For a domain $R$, we denote by $R^{\bullet}$ its multiplicative semigroup of nonzero elements, and obviously this is a monoid. All arithmetical terms defined for monoids carry over to domains, and we set $\mathcal A (R) = \mathcal A (R^{\bullet})$, and so on.

For a set $P$, we denote by $\mathcal F (P)$ the \ {\it free
abelian monoid} \ with basis $P$. Then every $a \in \mathcal F
(P)$ has a unique representation in the form
\[
a = \prod_{p \in P} p^{\mathsf v_p(a) } \quad \text{with} \quad
\mathsf v_p(a) \in \N_0 \ \text{ and } \ \mathsf v_p(a) = 0 \ \text{
for almost all } \ p \in P \,.
\]
We call $|a|= \sum_{p \in P}\mathsf v_p(a)$ the \emph{length} of $a$
and $\supp (a) = \{p \in P \mid \mathsf v_p (a) > 0\} \subset P$ the
{\it support} of $a$.
The  monoid  $\mathsf Z (H) = \mathcal F \bigl(
\mathcal A(H_\red)\bigr)$  is called the  {\it factorization
monoid}  of $H$, and  the unique homomorphism
\[
\pi \colon \mathsf Z (H) \to H_{\red} \quad \text{satisfying} \quad
\pi (u) = u \quad \text{for each} \quad u \in \mathcal A(H_\red)
\]
is  the  {\it factorization homomorphism}  of $H$. For $a
\in H$ and $k \in \N$,
\[
\begin{aligned}
\mathsf Z_H (a) = \mathsf Z (a)  & = \pi^{-1} (aH^\times) \subset
\mathsf Z (H) \quad
\text{is the \ {\it set of factorizations} \ of \ $a$} \,, \\
\mathsf Z_{H,k} (a) = \mathsf Z_k (a) & = \{ z \in \mathsf Z (a) \mid |z| = k \} \quad
\text{is the \ {\it set of factorizations} \ of \ $a$ of length \
$k$}, \quad \text{and}
\\
\mathsf L_H (a) = \mathsf L (a) & = \bigl\{ |z| \, \bigm| \, z \in
\mathsf Z (a) \bigr\} \subset \N_0 \quad \text{is the \ {\it set of
lengths} \ of $a$}  \,.
\end{aligned}
\]
We denote by $\mathcal L (H) = \{ \mathsf L (a) \mid a \in H \}$ the {\it system of sets of lengths} of $H$.
The monoid $H$ is called
\begin{itemize}
\item {\it atomic} if $\mathsf Z (a) \ne \emptyset$ for all $a \in H$,
\item an {\it \FF-monoid} if $\mathsf Z (a)$ is finite and non-empty for all $a \in H$,
\item {\it factorial} if $|\mathsf Z (a)| = 1$ for all $a \in H$, and
\item {\it half-factorial} if $|\mathsf L (a)| = 1$ for all $a \in H$.
\end{itemize}
If $H$ is atomic but not half-factorial, then there is an $a \in H$ with $|\mathsf L (a)| > 1$, and a simple calculation shows that $|\mathsf L (a^N)|>N$   for each $N \in \N$.
In a $v$-noetherian monoid all sets of lengths are finite. We discuss some parameters describing the non-uniqueness of factorizations.
Suppose that $H$ is atomic.

\medskip
\noindent
{\bf Lengths of factorizations}.
Sets of lengths and invariants derived from them are the best investigated parameters in factorization theory.
Let  $k \in \N$. If $H = H^{\times}$, then we set $\mathcal U_k (H) = \{k\}$. If $H \ne H^{\times}$, then
\[
\mathcal U_k (H) = \bigcup_{k \in L, L \in \mathcal L (H)} L
\]
is the union of all sets of lengths containing $k$. In both cases, we define
\[
\rho_k (H) = \sup \mathcal U_k (H) \quad \text{and} \quad \lambda_k (H) = \min \mathcal U_k (H) \,.
\]
Next, let
\[
\Delta (H) = \bigcup_{L \in \mathcal L (H)} \Delta (  L )
\]
denote the {\it set of distances} of $H$. By definition, $H$ is  half-factorial if and only  if $\Delta (H) = \emptyset$ (equivalently, $\mathcal U_k (H) = \{k\}$ for all $k \in \N$). We will need the following simple lemma.

\smallskip
\begin{lemma} \label{2.1}
Let $H$ be an atomic monoid and $n \in \N$. If $\mathcal U_n (H) \supset \N_{\ge n}$, then $\mathcal U_k (H) \supset \N_{\ge n}$ for all $k \ge n$.
\end{lemma}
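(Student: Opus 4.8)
The plan is to prove the inclusion for every $k \ge n$ by induction on $k$, with the base case $k = n$ being exactly the hypothesis. Before starting I would isolate two elementary structural properties of the unions $\mathcal U_k(H)$. The first is \emph{symmetry}: for $s, t \in \N$ one has $s \in \mathcal U_t(H)$ if and only if $t \in \mathcal U_s(H)$, since both assertions merely say that there is an element $a \in H$ with $\{s,t\} \subseteq \mathsf L(a)$. The second is a \emph{shift} property: $\mathcal U_k(H) + 1 \subseteq \mathcal U_{k+1}(H)$ for all $k$. To see this, note first that the hypothesis forces $\mathcal U_n(H)$ to be infinite, so $H \ne H^{\times}$, and then atomicity yields an atom $u \in \mathcal A(H)$. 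Now if $l \in \mathcal U_k(H)$, choose $a \in H$ with $k, l \in \mathsf L(a)$; appending $u$ to factorizations of $a$ gives $k+1, l+1 \in \mathsf L(au)$, whence $l+1 \in \mathcal U_{k+1}(H)$.

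With these in hand, the inductive step runs as follows. Assume $\mathcal U_k(H) \supseteq \N_{\ge n}$ for some $k \ge n$. For every $m \ge n+1$ we have $m-1 \ge n$, so $m-1 \in \mathcal U_k(H)$ by the inductive hypothesis, and the shift property gives $m \in \mathcal U_{k+1}(H)$; hence $\N_{\ge n+1} \subseteq \mathcal U_{k+1}(H)$. The only value left to treat is $m = n$ itself. Here I would use the base hypothesis together with symmetry: since $k+1 \ge n$ we have $k+1 \in \mathcal U_n(H)$, and symmetry converts this into $n \in \mathcal U_{k+1}(H)$. Combining the two parts yields $\mathcal U_{k+1}(H) \supseteq \N_{\ge n}$, closing the induction.

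The main point to watch — and it is the only nonroutine one — is that the shift property by itself loses the bottom value of the set: iterating it would only propagate $\N_{\ge n}$ to $\N_{\ge n+1}$, $\N_{\ge n+2}$, and so on, never recovering the endpoint $n$. Thus the real content of the argument is the recognition that one must feed in the endpoint separately at each step, and that symmetry is precisely the tool that supplies the single value $n$ that the shift cannot reach. Everything else is immediate from the definitions, so I do not anticipate any genuine obstacle beyond correctly pairing these two elementary moves.
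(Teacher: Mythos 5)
Your proof is correct and uses essentially the same two ingredients as the paper's own argument: induction on $k$, the symmetry $s \in \mathcal U_t(H) \Leftrightarrow t \in \mathcal U_s(H)$, and the shift obtained by appending a single atom to both factorizations. The only (immaterial) difference is how the cases are split: the paper invokes symmetry for all $m \in [n,k]$ and the shift for $m \ge k+1$, while you invoke symmetry only for the endpoint $m=n$ and the shift for all $m \ge n+1$.
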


\begin{proof}
We proceed by induction on $k$. We suppose that $n \in \N$, \ $k \ge n$ and \,$\mathcal U_l(H) \supset \N_{\ge n}$ for all $l \in [n,k]$, and we show that $\mathcal U_{k+1}(H) \supset \N_{\ge n}$. If $m \in [n,k]$, then $k+1 \in \mathcal U_m(H)$ by assumption, and thus also $m \in \mathcal U_{k+1}(H)$. If $m \ge k+1> n$, then $m-1 \in \mathcal U_k(H)$, and therefore there exist atoms $u_1, \ldots, u_k,\,v_1, \ldots, v_{m-1}$ such that $u_1 \cdot \ldots \cdot u_k = v_1 \cdot \ldots \cdot v_{m-1}$. Hence $u_1 \cdot \ldots \cdot u_k w = v_1 \cdot \ldots \cdot v_{m-1}w$ for any atom $w \in H$, and therefore $m \in \mathcal U_{k+1}(H)$.
\end{proof}

\medskip
\noindent
{\bf Distances of factorizations}.
Let $z,\, z' \in \mathsf Z (H)$. Then we can write
\[
z = u_1 \cdot \ldots \cdot u_lv_1 \cdot \ldots \cdot v_m \quad
\text{and} \quad z' = u_1 \cdot \ldots \cdot u_lw_1 \cdot \ldots
\cdot w_n\,,
\]
where  $l,\,m,\, n\in \N_0$ and $u_1, \ldots, u_l,\,v_1,
\ldots,v_m,\, w_1, \ldots, w_n \in \mathcal A(H_\red)$ are such that
\[
\{v_1 ,\ldots, v_m \} \cap \{w_1, \ldots, w_n \} = \emptyset\,.
\]
Then $\gcd(z,z')=u_1\cdot\ldots\cdot u_l$, and we call
\[
\dd (z, z') = \max \{m,\, n\} = \max \{ |z \gcd (z, z')^{-1}|,
|z' \gcd (z, z')^{-1}| \} \in \N_0
\]
the {\it distance} between $z$ and $z'$. If $\pi (z) = \pi (z')$ and
$z \ne z'$, then
\begin{equation} \label{basic1}
2 + \bigl| |z |-|z'| \bigr| \le \dd (z, z')
\end{equation}
by \cite[Lemma 1.6.2]{Ge-HK06a}. For subsets $X, Y \subset \mathsf Z
(H)$, we set
\[
\dd (X, Y) = \min \{ \dd (x, y ) \mid x \in X, \, y \in
Y \} \,,
\]
and thus  $\dd (X, Y) = 0$  if and only if ( $X  \cap Y \ne
\emptyset$ or $X= \emptyset$ or $Y = \emptyset$ ).

\medskip
\noindent
{\bf Chains of factorizations}. Let $a \in H$ and
$N \in \N_0 \cup \{\infty\}$. A finite sequence $z_0, \ldots,
z_k \in \mathsf Z (a)$ is called a {\it $($monotone$)$ $N$-chain of
factorizations} if $\dd (z_{i-1}, z_i) \le N$ for all $i \in
[1, k]$ (and $|z_0| \le \ldots \le |z_k|$ or $|z_0| \ge \ldots \ge
|z_k|$). We denote by  $\mathsf c (a) \in \N _0 \cup
      \{\infty\}$ (or by $\mathsf c_{\monn} (a)$ resp.)  the smallest $N \in \N _0 \cup \{\infty\}$ such
      that any two factorizations $z,\, z' \in \mathsf Z (a)$ can be
      concatenated by an $N$-chain (or by a monotone $N$-chain resp.).
Then
\[
\mathsf c(H) = \sup \{ \mathsf c(b) \mid b \in H\} \in \N_0 \cup \{\infty\} \quad \text{and} \quad
\mathsf c_{\monn} (H) = \sup \{ \mathsf c_{\monn} (b) \mid b \in H\} \in \N_0 \cup \{\infty\} \quad
\,
\]
denote  the \ {\it catenary degree} \ and the \ {\it monotone
catenary degree} of $H$. The monotone catenary degree is studied by
using the two auxiliary notions of the equal and the adjacent
catenary degrees. Let $\mathsf c_{\eq} (a)$ denote the smallest $N
\in \N_0 \cup \{\infty\}$ such that any two factorizations $z, z' \in \mathsf Z (a)$ with $|z| = |z'|$ can be concatenated by a monotone $N$-chain.
We call
      \[
      \mathsf c_{\eq} (H) = \sup \{ \mathsf c_{\eq} (b) \mid b \in H \} \in \N_0 \cup \{\infty\}
      \]
      the {\it equal catenary degree} of $H$.
We set
      \[
      \mathsf c_{\adj}(a)  = \sup \{ \dd \big( \mathsf Z_k (a),
      \mathsf Z_l (a) \big) \mid k, l \in \mathsf L (a) \ \text{are
      adjacent} \} \,,
      \]
      and the {\it adjacent catenary degree} of $H$ is defined as
      \[
      \mathsf c_{\adj} (H) = \sup \{ \mathsf c_{\adj} (b) \mid b \in H \} \in \N_0 \cup
      \{\infty\} \,.
      \]
Obviously,  we have
\[
\mathsf c (a) \le \mathsf c_{\monn} (a) = \sup \{ \mathsf c_{\eq} (a), \mathsf c_{\adj} (a) \} \le
\sup \mathsf L (a) \quad \text{for all} \quad a \in H \,,
\]
and hence
\begin{equation}
\mathsf c (H) \le \mathsf c_{\monn} (H) = \sup \{ \mathsf c_{\eq} (H), \mathsf c_{\adj} (H) \} \,.
\label{basic2}
\end{equation}
By definition, $H$ is
factorial if and only if $\mathsf c (H) = 0$ if and only if $\mathsf c_{\monn} (H) = 0$. If $H$ is
not factorial, then Equation (\ref{basic1}) shows that $\mathsf c (H) \ge 2$. Again by definition, we have
$\mathsf c_{\adj} (H) = 0$ if and only if $H$ is half-factorial, and if this holds, then $\mathsf c
(H) = \mathsf c_{\eq} (H) = \mathsf c_{\monn} (H)$. Furthermore, we will use the basic inequality
\begin{equation}
2 + \sup \Delta (H) \le \mathsf c (H) \,. \label{basic3}
\end{equation}
Therefore, $\mathsf c (H) = 2$ implies that $H$ is half-factorial, and $\mathsf c (H) = 3$ yields that $\Delta (H) \subset \{1\}$. Moreover,  $\mathsf c_{\eq} (H) = 0$ if and only if for all $a \in H$ and all $k \in \mathsf L (a)$ we have $|\mathsf Z_k (a)| = 1$. Thus a
recent result of Coykendall and Smith implies that a domain $R$ is factorial if and only if $\mathsf c_{\eq} (R) = 0$ (\cite[Corollary 2.12]{Co-Sm11a}; the analogous result is not true for monoids as it is outlined in the same paper).

\bigskip
\section{The local case: seminormal  finitely primary monoids} \label{3}
\bigskip

Weakly Krull monoids are intersections of finite character of their localizations at minimal primes
(see Section \ref{5}), and these localizations are primary.
So in order to understand the arithmetic of (seminormal) weakly Krull monoids we have to study the arithmetic of (seminormal) primary monoids first.
This will be done in the present section. The main result here is Theorem \ref{3.7}.

Let $D$ be a monoid. An element $q \in D$ is called {\it primary} if $q \notin D^{\times}$ and, for all $a, b \in D$,
$q \t ab$ implies $q \t a$ or $q \t b^n$ for some $n \in \N$. The monoid $D$ is called {\it primary} if one of the following equivalent conditions is satisfied:
\begin{itemize}
\item $D \ne D^{\times}$ and if every non-unit is primary,

\item $D \ne D^{\times}$ and $s$-$\spec (D) = \{\emptyset, D \setminus D^{\times} \}$),

\item $D \setminus D^{\times}$ and $D$ are the only non-empty radical ideal of $D$.
\end{itemize}
It is easy to verify that a saturated submonoid of a primary monoid is primary again.
We will mainly be concerned with a special class of primary monoids, called finitely primary.
The monoid $D$ is
called  {\it finitely primary} if there exist $s,\, \alpha \in \N$
such that $D$ is a submonoid of a factorial monoid $F= F^\times
\time [q_1,\ldots,q_s]$ with $s$ pairwise non-associated prime
elements $q_1, \ldots, q_s$ satisfying
\begin{equation} \label{finitelyprimary}
D \setminus D^\times \subset q_1 \cdot \ldots \cdot q_sF \quad
\text{and} \quad (q_1 \cdot \ldots \cdot q_s)^\alpha F \subset D
\,.
\end{equation}
Finitely primary
monoids serve as multiplicative models in the study of
one-dimensional local domains (see Lemma \ref{3.3}).
We start with a basic and well-known lemma. We provide its simple proof in order to show how factorization works in finitely primary monoids (arguments of this type will be used a lot in the present  paper).

\medskip
\begin{lemma} \label{3.0}
Let $D$ be a finitely primary monoid of rank $s \ge 2$ with all conventions  as in $(\ref{finitelyprimary})$. Let $a = \epsilon q_1^{k_1} \cdot \ldots \cdot q_s^{k_s} \in D$ for some $\epsilon \in F^{\times}$ and $k_1, \ldots, k_s \in \N$. Then $\min \mathsf L (a) \le 2 \alpha$ and $\max \mathsf L (a) \le \min \{k_1, \ldots, k_s\}$.
\end{lemma}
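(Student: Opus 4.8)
The plan is to establish the two inequalities separately, proving the upper bound on $\max\mathsf L(a)$ first, since it will serve as the engine for the bound on $\min\mathsf L(a)$.

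For $\max\mathsf L(a)\le\min\{k_1,\dots,k_s\}$, I would take an arbitrary factorization $a=w_1\cdot\ldots\cdot w_n$ into atoms of $D$. Each $w_j$ is a non-unit, so by the first inclusion in $(\ref{finitelyprimary})$ we have $w_j\in q_1\cdot\ldots\cdot q_s F$, i.e. $\mathsf v_{q_i}(w_j)\ge 1$ for every $i\in[1,s]$. Fixing $i$ and summing over $j$ gives $k_i=\sum_{j=1}^n\mathsf v_{q_i}(w_j)\ge n$. As this holds for each $i$, we get $n\le\min\{k_1,\dots,k_s\}$, and taking the supremum over all factorizations yields the claim. (Finitely primary monoids are atomic, so $\mathsf L(a)\ne\emptyset$ and these suprema are over a nonempty set.)

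For $\min\mathsf L(a)\le 2\alpha$, the key elementary fact is the subadditivity $\min\mathsf L(xy)\le\min\mathsf L(x)+\min\mathsf L(y)$, obtained by concatenating length-minimal factorizations of $x$ and of $y$. If $\min\{k_1,\dots,k_s\}\le 2\alpha$, the bound is immediate from the first part. Otherwise all $k_i>2\alpha$, and I would split $a$ as a product of two elements of $D$, each of which the first part forces to be short. Concretely, define exponent vectors $\mathbf a=(a_1,\dots,a_s)$ and $\mathbf b=(b_1,\dots,b_s)$ by $a_1=\alpha$, $a_2=k_2-\alpha$, $b_1=k_1-\alpha$, $b_2=\alpha$, and $a_i=\alpha$, $b_i=k_i-\alpha$ for $i\ge 3$; here I use $s\ge 2$ to have two distinct coordinates available. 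Since every $k_i>2\alpha$, all entries of $\mathbf a$ and $\mathbf b$ are $\ge\alpha$, so by the second inclusion in $(\ref{finitelyprimary})$ the elements $u_1=\epsilon\prod_i q_i^{a_i}$ and $u_2=\prod_i q_i^{b_i}$ both lie in $D$, and $a=u_1u_2$. By construction $\mathbf a$ has a coordinate equal to $\alpha$ (the first) and $\mathbf b$ has a coordinate equal to $\alpha$ (the second), so the first part applied to $u_1$ and to $u_2$ gives $\min\mathsf L(u_1)\le\max\mathsf L(u_1)\le\alpha$ and likewise $\min\mathsf L(u_2)\le\alpha$. Subadditivity then yields $\min\mathsf L(a)\le\min\mathsf L(u_1)+\min\mathsf L(u_2)\le 2\alpha$.

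The max-length bound is routine; the only real idea is in the min-length bound, namely engineering a two-factor splitting $a=u_1u_2$ in which both factors are \emph{certified} to lie in $D$ (all valuations $\ge\alpha$, via $(q_1\cdot\ldots\cdot q_s)^\alpha F\subset D$) while each factor simultaneously retains some coordinate as small as $\alpha$. This is precisely what makes the max-length bound return the value $\alpha$ on each factor, and it is where the hypothesis $s\ge 2$ is indispensable, since one needs two \emph{different} coordinates to host the two witnesses; the resulting estimate is then $\alpha+\alpha=2\alpha$.
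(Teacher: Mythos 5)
Your proof is correct and follows essentially the same route as the paper: the max-length bound comes from counting the $q_i$-valuations of the non-unit factors, and the min-length bound comes from splitting $a=u_1u_2$ with both factors in $(q_1\cdot\ldots\cdot q_s)^{\alpha}F\subset D$ and each retaining a coordinate equal to $\alpha$, so that the first part caps $\max\mathsf L(u_j)$ at $\alpha$. The only difference is the cosmetic choice of how the exponents in coordinates $i\ge 3$ are distributed between the two factors.
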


\begin{proof}
If $a=a_1 \cdot \ldots \cdot a_l$ with $a_1, \ldots, a_l \in D \setminus D^{\times}$, then $D \setminus D^\times \subset q_1 \cdot \ldots \cdot q_sF$ implies that $l \le \min \{k_1, \ldots, k_s\}$ and hence $\max \mathsf L (a) \le \min \{k_1, \ldots, k_s\}$. If $\max \mathsf L (a) > 2 \alpha$, then $a=a_1a_2$, where
\[
a_1 = \epsilon q_1^{\alpha} q_2^{k_2-\alpha} \cdot \ldots \cdot q_s^{k_s-\alpha} \quad \text{and} \quad a_2=q_1^{k_1-\alpha}q_2^{\alpha} \cdot \ldots \cdot q_s^{\alpha} \,,
\]
and hence $\min \mathsf L (a) \le \min \mathsf L (a_1)+\min \mathsf L (a_2) \le \max \mathsf L (a_1)+\max \mathsf L (a_2) \le \alpha + \alpha$.
\end{proof}

Our notation on ideal theory  follows \cite{HK98} with the obvious modifications induced by the fact that the monoids in this paper do not contain a zero-element. In particular, for subsets $A, B \subset \mathsf q (D)$, we denote by
$(A \DP B) = \{ x \in \mathsf q (D) \mid x B \subset A \}$, by $A^{-1} = (D \DP A)$, and by $A_v = (A^{-1})^{-1}$.  By an ideal of $D$ we always mean an $s$-ideal, and an $s$-ideal $A$ is a $v$-ideal if $A_v = A$. We denote by $\mathcal F_v (D)$ the set of all fractional $v$-ideals and by $\mathcal I_v (D)$ the set of all $v$-ideals of $D$. Furthermore, $\mathcal I_v^* (D)$ is the monoid of $v$-invertible $v$-ideals (with $v$-multiplication) and $\mathcal F_v (D)^{\times} = \mathsf q \big( \mathcal I_v^* (D) \big)$ is its quotient group of fractional invertible $v$-ideals.
By  $\mathfrak X (D)$, we denote the set of all minimal non-empty prime $s$-ideals of $D$. Note that the localization $D_{\mathfrak p}$ is primary for each $\mathfrak p \in \mathfrak X (D)$.
We denote by
\begin{itemize}
\item $D' = \{ x \in \mathsf q (D) \mid \ \text{there exists some} \ N \in
\N \
             \text{such that} \ x^n \in D \ \text{for all} \ n \ge N \}
$ the {\it seminormal closure} (also called the {\it
seminormalization}) of $D$, and by

\smallskip
\item $\wh D = \{ x \in \mathsf q (D) \mid \ \text{there exists
             some } \ c \in D \ \text{such that} \ cx^n \in D \ \text{for all}
             \ n \in \N \}$ the {\it complete integral closure} of
             $D$ \,,
\end{itemize}
and observe that $
D \subset D'  \subset \wh D \subset \mathsf
q (D)$. The monoid $D$ is called
\begin{itemize}
\item {\it completely integrally closed} if $D = \wh D$, and

\item {\it seminormal} if $D = D'$ (equivalently,  if $x \in \mathsf q (D)$ and $x^2, x^3 \in D$, then
           $x \in D$).
\end{itemize}
It is easily checked that the  seminormalization $D'$ of  $D$ is seminormal (i.e., $(D')' = D')$,  that $D$ is primary if and only if $D'$ is primary, and that $D^{\times} = {D'}^{\times} \cap D$ (\cite[Propositions 1 and 2]{Ge96}).
A domain $R$ is seminormal if its multiplicative monoid $R^{\bullet}$ is seminormal (equivalently, the canonical monomorphism $\Pic (R) \to \Pic (R[X])$ is an isomorphism; see \cite[Section 2.4]{Vi10a} for a survey on this).

A monoid $D$ is said to be {\it archimedean} if $\wh D^{\times} \cap D = D^{\times}$ (equivalently, $\bigcap_{n \in \N} a^nD = \emptyset$ for all $a \in D \setminus D^{\times}$).  Every primary monoid, every $v$-noetherian monoid, every completely integrally closed monoid, and every domain satisfying Krull's Principal Ideal Theorem is archimedean. A monoid $D$ is called a {\it discrete valuation monoid} if $D_{\red} \cong (\N_0, +)$ (equivalently, $D$ is primary and contains a prime element).

If $D$ is finitely primary  and  all notation is as in Equation (\ref{finitelyprimary}), then obviously $F = \wh D$ and $s = |\mathfrak X (\wh D)|$. We  will use without further mention that $D$ is finitely primary if and only if one of  the
following equivalent conditions holds (see
\cite[Theorem 2.9.2]{Ge-HK06a}):
\begin{enumerate}
\item[(a)] $D$ is finitely primary of rank $s$.

\item[(b)] $D_{\red}$ is finitely primary of rank $s$.

\item[(c)] $D$ is primary, $(D \DP \wh D) \ne \emptyset$ and $\wh D _{\red} \cong \N_0^s$.
\end{enumerate}

\medskip
\begin{lemma} \label{3.1}
Let $D$ be a monoid.
\begin{enumerate}
\item If $S \subset D$ is a submonoid, then $(S^{-1}D)' = S^{-1}D'$.
      In particular, if $D$ is seminormal, then $S^{-1}D$ is seminormal.

\item If $(D_i)_{i \in I}$ is a family of monoids such that $D = \coprod_{i \in I} D_i$, then $D' = \coprod_{i \in I} D_i'$. In particular, $D$ is seminormal
      if and only if $D_i$ is seminormal for all $i \in I$.

\item $D$ is seminormal if and only if $D_{\red}$ is seminormal.

\item If $D$ is seminormal and   $H \subset D$ is a saturated submonoid, then $H$ is seminormal.
\end{enumerate}
\end{lemma}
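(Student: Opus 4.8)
The plan is to base everything on the equivalent description of seminormality recorded just before the lemma: a monoid $E$ is seminormal if and only if every $x \in \mathsf q(E)$ with $x^2, x^3 \in E$ already lies in $E$. With this criterion in hand, parts (3) and (4) are almost immediate, part (2) is a coordinatewise argument, and the genuine work sits in the first inclusion of part (1). I would therefore prove (1) first (it is the main obstacle), then harvest (2)--(4).

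For part (1), I would first note that localizing does not enlarge the quotient group, so $\mathsf q(S^{-1}D) = \mathsf q(D)$ and both $(S^{-1}D)'$ and $S^{-1}D'$ sit inside $\mathsf q(D)$. The inclusion $S^{-1}D' \subseteq (S^{-1}D)'$ is direct: writing $x = d'/s$ with $d' \in D'$, $s \in S$, and choosing $N$ with $(d')^n \in D$ for all $n \ge N$, one gets $x^n = (d')^n/s^n \in S^{-1}D$ for all $n \ge N$. The reverse inclusion is the crux. Given $x \in (S^{-1}D)'$, fix $N$ with $x^n \in S^{-1}D$ for all $n \ge N$, and choose a common denominator $s \in S$ (a product of two denominators) with $sx^N \in D$ and $sx^{N+1} \in D$. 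Put $y := sx$. Since $s \in S \subset D$, one has $y^N = s^{N-1}(sx^N) \in D$ and $y^{N+1} = s^N(sx^{N+1}) \in D$. Because $\gcd(N, N+1) = 1$, every sufficiently large integer $m$ is of the form $aN + b(N+1)$ with $a, b \in \N_0$, so $y^m = (y^N)^a (y^{N+1})^b \in D$ for all large $m$; hence $y \in D'$ and $x = y/s \in S^{-1}D'$. I expect precisely this passage from $y^N, y^{N+1} \in D$ to $y \in D'$, via the numerical (Frobenius) observation, to be the decisive step; the rest of the lemma is formal. The ``in particular'' is the case $D = D'$.

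For part (2), I would write an element of $\coprod_i D_i$ as a tuple $(x_i)_i$ with $x_i = 1$ for almost all $i$, use that $\mathsf q(D) = \coprod_i \mathsf q(D_i)$, and observe that a tuple lies in $D$ iff each coordinate lies in $D_i$. Then $x \in D'$ forces $x_i \in D_i'$ for every $i$ (restrict the defining threshold $N$ to one coordinate), and conversely, given $x_i \in D_i'$ for all $i$, taking $N$ to be the maximum of the finitely many nontrivial coordinate thresholds yields $x \in D'$; thus $D' = \coprod_i D_i'$. Since $D_i \subseteq D_i'$ always holds, equality of the two coproducts is equivalent to $D_i = D_i'$ for each $i$, tested on tuples supported in a single coordinate, which gives the ``in particular''. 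For part (3), I would use $\mathsf q(D_{\red}) = \mathsf q(D)/D^{\times}$ together with $dD^{\times} \subset D$ for $d \in D$: if $D$ is seminormal and $\bar x = xD^{\times}$ satisfies $\bar x^2, \bar x^3 \in D_{\red}$, then $x^2, x^3 \in D$, so $x \in D$ and $\bar x \in D_{\red}$; conversely, if $D_{\red}$ is seminormal and $x \in \mathsf q(D)$ has $x^2, x^3 \in D$, then $\bar x \in D_{\red}$ gives $x = du$ with $d \in D$, $u \in D^{\times}$, hence $x \in D$. Finally, for part (4), with $H = \mathsf q(H) \cap D$ saturated in the seminormal monoid $D$, I would take $x \in \mathsf q(H)$ with $x^2, x^3 \in H$; then $x^2, x^3 \in D$ and $x \in \mathsf q(H) \subseteq \mathsf q(D)$, so seminormality of $D$ gives $x \in D$, and saturation yields $x \in \mathsf q(H) \cap D = H$.
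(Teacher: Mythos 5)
Your proposal is correct and matches the paper: the paper writes out only part (4), with exactly your argument ($x^2,x^3\in H$ forces $x^2,x^3\in D$, hence $x\in D$, hence $x\in\mathsf q(H)\cap D=H$), cites the literature for part (1), and declares (2) and (3) easy. Your supplied details for (1)--(3) --- in particular the denominator-clearing step $y=sx$ with $y^N,y^{N+1}\in D$ and the numerical observation that $\gcd(N,N+1)=1$ yields $y\in D'$ --- are the standard arguments and are sound.
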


\begin{proof}
1. is well-known (see, for example, \cite[Lemma 2.4]{Re13a}). 2. and 3. are easy to check. To show 4.,
let $x \in \mathsf q (H)$ such that $x^2, x^3 \in H$. Then $x \in \mathsf q (D)$, $x^2, x^3 \in D$, hence $x \in D$, and therefore $x \in \mathsf q (H) \cap D = H$.
\end{proof}

\medskip
\begin{lemma} \label{3.2}
Let $D$ be a seminormal monoid and $D \subsetneq S \subset \mathsf q(D)$ an overmonoid.
\begin{enumerate}
\item
$(D \DP S)$ is a radical ideal both of $D$ and of $S$. If \,$D$ is primary and $S \subset \wh D$, then $(D \DP S) = (D \DP \wh D) = D \setminus D^\times$.

\smallskip

\item
If \,$S$ is primary and $(D \DP S) \ne \emptyset$, then $S$ is seminormal.

\smallskip

\item
Let $D$ be archimedean, $(D \DP \wh D) \ne \emptyset$, and let $\wh D$ be primary. Then $\wh D \setminus \wh D^\times = D \setminus D^\times$, and if \,$\wh D$ is $v$-noetherian, then $D$ is $v$-noetherian, too.

\smallskip

\item
Suppose that $D$ is primary, $S$ is archimedean, $S \subset \wh D$, and let $\wh D$ be $v$-noetherian and primary. Then $S$ is seminormal, $v$-noetherian and primary.
\end{enumerate}
\end{lemma}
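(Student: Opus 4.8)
The plan is to derive everything from the conductor identity in part 1 together with parts 2 and 3, treating primality of $S$ as the central point. Since $D \subsetneq S \subset \wh D$ we have $D \subsetneq \wh D$, so part 1 applies and yields that
\[
\mathfrak m := (D \DP S) = (D \DP \wh D) = D \setminus D^{\times}
\]
is a nonempty radical ideal of both $D$ and $\wh D$; in particular $\mathfrak m \wh D \subset D$ and $\wh D \, \mathfrak m \subset \mathfrak m$. As $D$ is primary it is archimedean, so part 3 applies to $D$ and gives $\wh D \setminus \wh D^{\times} = D \setminus D^{\times} = \mathfrak m$. Thus the nonunits of $\wh D$ are exactly the elements of $\mathfrak m$, and they all lie in $D$.

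First I would establish $\wh S = \wh D$. The inclusion $\wh D \subset \wh S$ is immediate from $D \subset S$. For the reverse, I take $x \in \wh S$ together with $c \in S$ satisfying $cx^{n} \in S$ for all $n \in \N$, fix any $m \in \mathfrak m$, and put $d = mc$. Then $d = mc \in \mathfrak m \wh D \subset D$, and for every $n$ one has $dx^{n} = m(cx^{n}) \in \mathfrak m \wh D \subset D$ because $cx^{n} \in S \subset \wh D$; hence $x \in \wh D$. With $\wh S = \wh D$ in hand, archimedeanness of $S$ gives $S^{\times} = \wh S^{\times} \cap S = \wh D^{\times} \cap S$, so that
\[
S \setminus S^{\times} = S \cap (\wh D \setminus \wh D^{\times}) = S \cap \mathfrak m = \mathfrak m,
\]
the last equality because $\mathfrak m \subset D \subset S$. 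In particular $S \neq S^{\times}$.

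Next I would prove that $S$ is primary by showing that every nonunit divides a power of every other nonunit. Let $a, b \in S \setminus S^{\times} = \mathfrak m$; these are nonunits of $\wh D$, so primality of $\wh D$ gives $a^{n} = bw$ for some $n \in \N$ and $w \in \wh D$. Multiplying by $a$ yields $a^{n+1} = b(wa)$, and $wa \in \wh D \, \mathfrak m \subset \mathfrak m \subset S$; hence $b$ divides $a^{n+1}$ in $S$. Therefore $\mathfrak m = S \setminus S^{\times}$ is the radical of every principal ideal generated by a nonunit, and $S$ is primary. Now $S$ is primary with $(D \DP S) = \mathfrak m \neq \emptyset$, so part 2 shows that $S$ is seminormal. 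Finally $S$ is seminormal and archimedean, $\wh S = \wh D$ is primary and $v$-noetherian, and $(S \DP \wh S) \supset \mathfrak m \neq \emptyset$; applying part 3 with $S$ in place of $D$ (the case $S = \wh D$ being trivial) shows that $S$ is $v$-noetherian.

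The main obstacle is the primality of $S$, and the key that unlocks it is the conductor identity $\mathfrak m = (D \DP \wh D) = D \setminus D^{\times}$ from part 1 — the one place where seminormality of $D$ is genuinely used. It simultaneously forces $\wh S = \wh D$ through the computation above and, combined with archimedeanness of $S$, pins down the nonunits of $S$ as exactly $\mathfrak m$; the fact that $\mathfrak m$ absorbs multiplication by $\wh D$ is then precisely what lets me upgrade divisibility from $\wh D$ to $S$ in the last step. Once primality is secured, the seminormality and $v$-noetherian conclusions follow formally from parts 2 and 3.
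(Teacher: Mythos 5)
Your proposal only addresses part 4 of the lemma: you take the statements of parts 1, 2 and 3 as given and combine them. That is a genuine gap, because the lemma as stated has four assertions and the first three are exactly where the real work with seminormality happens. Part 1 needs the argument that $x^kS\subset D$ forces $(xy)^l\in D$ for all $l\ge k$ and hence $xy\in D$ by seminormality, plus (for the second sentence) the nontrivial fact that $(D\DP\wh D)\ne\emptyset$ for a primary seminormal monoid, which the paper gets from a cited result on G-monoids. Part 2 needs the case distinction on whether $x^2,x^3$ are units of $S$, and part 3 needs the characterization of $v$-noetherian monoids via Mori sets. None of this is in your write-up, so as a proof of the lemma it is incomplete.

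The part you did write, for assertion 4, is correct and close in spirit to the paper's proof: both reduce to showing $S$ is primary and then invoke parts 2 and 3. Two points of comparison. First, you prove $\wh S=\wh D$ explicitly via the conductor element $d=mc$; the paper simply asserts this equality, so your argument actually supplies a detail the paper glosses over. Second, your route to primality is slightly different: you show every nonunit of $S$ divides a power of every other nonunit, using that $\mathfrak m=\wh D\setminus\wh D^\times$ is an ideal of $\wh D$ contained in $S$, whereas the paper shows that any nonempty prime $s$-ideal $\mathfrak p$ of $S$ is squeezed between $D\setminus D^\times$ and $\wh D\setminus\wh D^\times$ and hence equals $S\setminus S^\times$. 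Both are valid; your version is a bit more hands-on with divisibility, the paper's is a one-line sandwich once the equality of nonunit sets is known. Your remark that the case $S=\wh D$ must be treated separately when applying part 3 to $S$ (since the blanket hypothesis requires a proper overmonoid) is a legitimate and welcome precaution. So: fix the omission of parts 1 to 3 and the proof is fine.
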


\begin{proof}
1. Obviously, $(D \DP S)$ is an ideal both of $S$ and of $D$. If $x \in \sqrt{(D\DP S)}$, then $x^kS \subset D$ for some $k \in \N$, and therefore $(xy)^l \in D$ for all $y \in S$ and all $l \ge k$. Since $D$ is seminormal, it follows that $xy \in D$ for all $y \in S$ and therefore $x \in (D \DP S)$. Hence $(D \DP S) = \sqrt{(D \DP S)}$ is a radical ideal.

Let now $D$ be primary. Then $D \supsetneq (D \DP S) \supset (D \DP \wh D) \ne \emptyset$ by \cite[Proposition 4.8]{G-HK-H-K03} (observe that every primary monoid is a G-monoid), and thus $(D \DP S)=(D \DP \wh D) = D \setminus D^\times$, since this is the only non-empty radical ideal of $D$ distinct from $D$.

\smallskip

2. By 1., $(D \DP S)$ is a radical ideal of $S$, hence $(D \DP S) = S \setminus S^\times \subset D$. Suppose now that $x \in \mathsf q(S)= \mathsf q(D)$ and $x^2, \, x^3 \in S$. If $x^2\in S^{\times}$ or $x^3\in S^{\times}$, then $(x^2)^3 = (x^3)^2 \in S^{\times}$, hence $x^2, x^3 \in S^{\times}$ and $x = x^{-2}x^3 \in S$. Otherwise, $x^2 \notin S^{\times}$ and $x^3 \notin S^{\times}$, and hence  $x^2, x^3 \in D$ and  $x \in D \subset S$.

\smallskip

3. By 1., $(D \DP \wh D)$ is a radical ideal of $\wh D$, hence $(D \DP \wh D) = \wh D \setminus \wh D^\times \subset D$, and since $\wh D^\times \cap D = D^\times$, we obtain $\wh D \setminus \wh D^\times = D \cap (\wh D \setminus \wh D^\times) = D \setminus D^\times$. By \cite[Proposition 2.2]{Re13a}, a monoid $H$ is $v$-noetherian if and only if $H \setminus H^\times$ is a $\mathsf q(H)$-Mori set. If $\wh D$ is $v$-noetherian, then  $\wh D \setminus \wh D^\times = D \setminus D^\times$ is a $\mathsf q(\wh D)$-Mori set, and since $\mathsf q(D) = \mathsf q(\wh D)$ it follows that $D$ is $v$-noetherian.

\smallskip

4. Since $\wh D = \wh S$, we get $D \setminus D^\times = (D \DP S) = ( D \DP \wh D) = (D \DP \wh S) \subset (S \DP \widehat S)$.  If $\emptyset \ne \mathfrak p \in s\text{-}\spec(S)$, then $\emptyset \ne \mathfrak p \cap D \in s\text{-}\spec(D)$ and
\[
\wh D \setminus \wh D^\times = D \setminus D^\times = \mathfrak p \cap D \subset \mathfrak p \subset S \setminus S^\times = S \setminus ( \widehat{S}^{\times} \cap S) = S \setminus \widehat{S}^{\times} \subset \widehat S \setminus \widehat{S}^{\times} = \widehat D \setminus \widehat{D}^{\times} \,.
\]
Hence $\mathfrak p = S \setminus S^\times$, and therefore $S$ is primary. Hence $S$ is seminormal by 2., and by 3. (applied with $S$ instead of $D$) we infer that $S$ is $v$-noetherian.
\end{proof}

\smallskip
We are going to address C-monoids from time to time.
We do not want to repeat (the rather involved) definition of C-monoids. However, since C-monoids (from C-monoids in general Krull domains to arithmetical congruence monoids in the integers; see also Examples \ref{5.7}.2) have found some attention in recent literature (\cite{Ge-HK06a, Re13a, Ge-Ra-Re14c}), we want to outline the connection with the present concepts. However, C-monoids play no role in our main results (Theorems \ref{5.6} and \ref{6.2}), and hence the reader can skip these comments.

\medskip
\begin{lemma} \label{3.3}~
\begin{enumerate}
\item Let $D \subset F = F^\times \time [q_1,\ldots,q_s]$ be a finitely
primary monoid of rank $s$ and exponent $\alpha$. Then
\[
D' = q_1 \cdot \ldots \cdot q_s F \cup {D'}^{\times} \,, \quad
 \quad \wh D = \wh{D'} = F \,,
\]
and  $D'$ is a seminormal    finitely primary monoid
of rank $s$ and exponent $1$. Moreover, $D'$ is a {\rm C}-monoid and $v$-noetherian.

\smallskip
\item A domain $R$ is one-dimensional and local if and only if $R^{\bullet}$ is primary.

\smallskip
\item For a domain $R$ the following statements are equivalent{\rm \,:}
      \begin{enumerate}
      \smallskip
      \item $R$ is a seminormal one-dimensional local Mori domain.

      \smallskip
      \item $R^{\bullet}$ is seminormal  finitely primary.
      \end{enumerate}
\end{enumerate}
\end{lemma}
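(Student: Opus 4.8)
The plan is to prove the three parts in order, using part~1 to pin down the seminormalization, part~2 as the local dictionary between domains and primary monoids, and then assembling part~3 from these together with standard Mori--Krull theory.

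For part~1 the decisive step is the description of $D'$, which I would obtain by a power argument. If $x = q_1 \cdots q_s f$ with $f \in F$, then for every $n \ge \alpha$ we have $x^n = (q_1 \cdots q_s)^{\alpha}\bigl((q_1 \cdots q_s)^{n-\alpha} f^n\bigr) \in (q_1 \cdots q_s)^{\alpha} F \subset D$, so $x \in D'$; hence $q_1 \cdots q_s F \subset D'$, and trivially ${D'}^{\times} \subset D'$. For the reverse inclusion take $x \in D' \subset \wh D = F$ and write $x = \epsilon q_1^{k_1} \cdots q_s^{k_s}$ with $\epsilon \in F^{\times}$: if all $k_i \ge 1$ then $x \in q_1 \cdots q_s F$, while if some $k_i = 0$ then every power $x^n$ again has vanishing $q_i$-exponent, so $x^n \notin q_1 \cdots q_s F \supset D \setminus D^{\times}$, and since $x^n \in D$ for all large $n$ this forces $x^n \in D^{\times}$ and hence $x \in {D'}^{\times}$. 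Thus $D' = q_1 \cdots q_s F \cup {D'}^{\times}$. The equality $\wh{D'} = F$ then follows from $D \subset D' \subset F$ and $\mathsf q(D) = \mathsf q(D') = \mathsf q(F)$ via monotonicity of the complete integral closure and $\wh F = F$. Reading off the displayed description, $D'$ satisfies condition~(c) (it is primary since $D$ is, $(D' \DP \wh{D'}) \ni q_1 \cdots q_s$, and $\wh{D'}_{\red} \cong \N_0^s$), so $D'$ is finitely primary of rank~$s$; the same description gives exponent~$1$, and seminormality of $D'$ is the recorded identity $(D')' = D'$.

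The two remaining assertions of part~1 I would not reprove but deduce from the general theory: since $\wh{D'} = F$ is factorial and the conductor $(D' \DP F)$ is nonempty, $D'$ is a C-monoid, and C-monoids are $v$-noetherian (\cite{Ge-HK06a}). Part~2 I would likewise cite (\cite{Ge-HK06a}); the underlying reason is that in a one-dimensional local domain with maximal ideal $\mathfrak m$ one has $\sqrt{qR} = \mathfrak m$ for every nonzero nonunit $q$, so every such $q$ is primary in the monoid sense, while conversely primarity of $R^{\bullet}$ collapses $s\text{-}\spec(R^{\bullet})$ to $\{\emptyset, R^{\bullet} \setminus (R^{\bullet})^{\times}\}$, i.e. to a single nonzero prime, which is exactly the one-dimensional local condition.

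For part~3 the implication (b)$\Rightarrow$(a) is quick: $R^{\bullet}$ seminormal gives $R$ seminormal by the seminormality dictionary, $R^{\bullet}$ primary gives $R$ one-dimensional local by part~2, and $R^{\bullet}$, being seminormal, coincides with its own seminormalization, which is $v$-noetherian by part~1, so $R$ is Mori. The content lies in (a)$\Rightarrow$(b). Seminormality and primarity of $R^{\bullet}$ transfer at once (the dictionary and part~2), so the task reduces to verifying condition~(c) for $R^{\bullet}$, namely $(R^{\bullet} \DP \wh{R^{\bullet}}) \ne \emptyset$ and $\wh{R^{\bullet}}_{\red} \cong \N_0^s$, and I expect this to be the main obstacle. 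The plan is to invoke Mori--Krull theory for the one-dimensional local Mori domain $R$: its complete integral closure $\wh R$ is a Krull domain with nonzero conductor $(R \DP \wh R)$, and one-dimensionality forces $\wh R$ to be semilocal, say with $s$ maximal ideals. A semilocal Krull domain of dimension one is a semilocal principal ideal domain, so $\wh{R^{\bullet}}_{\red} \cong \N_0^s$, while the nonzero ring conductor yields $(R^{\bullet} \DP \wh{R^{\bullet}}) \ne \emptyset$. The delicate point, where I would concentrate the argument, is establishing that the conductor is nonzero and that $\wh R$ is genuinely a one-dimensional semilocal Krull domain, since this is precisely what upgrades ``one-dimensional local Mori'' to ``finitely primary''.
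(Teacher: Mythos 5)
Your overall route coincides with the paper's: the same power computation yields $D' = q_1 \cdot \ldots \cdot q_s F \cup {D'}^{\times}$ and $\wh{D'} = F$, part~2 is handled by citation in both treatments, and 3.(b)$\Rightarrow$(a) is assembled from parts~1 and~2 exactly as in the paper. However, your justification of the C-monoid claim --- ``$\wh{D'}=F$ is factorial and the conductor is nonempty, hence $D'$ is a C-monoid'' --- is not a valid implication: a finitely primary monoid with factorial complete integral closure and nonempty conductor need not be a C-monoid, since the reduced class semigroup in $F$ can be infinite (already in rank one, with $F^{\times}$ infinite and the set of units $\epsilon$ with $\epsilon q_1 \in D$ chosen badly). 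The paper instead verifies the hypotheses of \cite[Corollary 2.9.8]{Ge-HK06a} with $V = F^{\times}$ and $\alpha = 1$, which works precisely because of the shape $D' = q_1 \cdot \ldots \cdot q_s F \cup {D'}^{\times}$ just established: all of $F^{\times}$ acts trivially, so the relevant finiteness is automatic. Your conclusion is correct, but the reason you give for it is not.

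The more substantive issue is 3.(a)$\Rightarrow$(b). You correctly isolate the nonvanishing of the conductor $(R \DP \wh R)$ as the crux, but you only announce that you would ``concentrate the argument'' there without supplying it; this is exactly the step where seminormality enters, and as written your proof has a genuine gap at that point. The paper closes it with Lemma \ref{3.2}.1: since $R^{\bullet}$ is primary (by part~2) and seminormal, the conductor $(R^{\bullet} \DP \wh{R^{\bullet}})$ is a radical ideal equal to $R^{\bullet} \setminus (R^{\bullet})^{\times}$, in particular nonempty, the nonemptiness ultimately resting on \cite[Proposition 4.8]{G-HK-H-K03} for (seminormal) G-monoids. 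Once the conductor is known to be nonzero, the paper simply cites \cite[Proposition 2.10.7]{Ge-HK06a} rather than redoing the semilocal-PID analysis of $\wh R$ that you sketch; your sketch of that part is fine, but it is not where the difficulty lies.
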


\begin{proof}
1. Since  $D \subset D' \subset \wh D = F$, it follows that  $F = \wh D \subset  \wh{D'} \subset \wh F = F$. Suppose that $a = \varepsilon q_1^{k_1} \cdot \ldots \cdot q_s^{k_s} \in F$, where $\varepsilon \in F^\times$ and $k_1, \ldots, k_s \in \N_0$. If $a \in D'$, then $a^n \in D$ for all sufficiently large $n \in \N$, and therefore either $k_1= \ldots = k_s =0$ and $\varepsilon \in D^{\prime \times}$, or $k_1, \ldots, k_s \in \N$. Hence it follows that $D' \subset q_1 \cdot \ldots \cdot q_sF \cup D^{\prime \times}$, and as the opposite inclusion is obvious, we get $D' = q_1 \cdot \ldots \cdot q_sF \cup D^{\prime \times}$. In particular, it follows that $D'$ is finitely primary of rank $s$ and exponent $1$. Moreover, it is easily checked that $D'$ satisfies the conditions of \cite[Corollary 2.9.8]{Ge-HK06a} (with $V = F^\times$ and $\alpha =1$). Hence $D'$ is a ${\rm C}$-monoid (even a ${\rm C}_0$-monoid) and thus it is $v$-noetherian by \cite[Theorem 2.9.13]{Ge-HK06a}.

\smallskip
2. See \cite[Proposition 2.10.7]{Ge-HK06a}.

\smallskip
3. (a)\, $\Rightarrow$\, (b) \ Since $R$ is seminormal, 2. and Lemma \ref{3.2}.1 imply that the conductor $(R \DP \wh R) \ne \{0\}$. Now  \cite[Proposition 2.10.7]{Ge-HK06a} implies that $R^{\bullet}$ is finitely primary.

(b)\, $\Rightarrow$\, (a) \ By 1., $R$ is a Mori domain and by 2., $R$ is one-dimensional and local.
\end{proof}

\medskip
In contrast to the above result,  there are seminormal primary \FF-monoids which are not $v$-noetherian and hence not finitely primary (\cite[Example 4.7]{Ge-Ha08a}). Let $H$ be a  seminormal  $v$-noetherian primary monoid. If $H$ stems from a domain, then $H$ is finitely primary by Lemma \ref{3.3}. But this does not hold in general as the following example shows (thus the assumption -- that all $H_{\mathfrak p}$ are finitely primary --  made in the results \ref{5.5},  \ref{L:CharClsIso}, \ref{5.6}, and others is indeed an additional one).

\smallskip
\begin{example} \label{3.4}
Let $S$ be a non-factorial Krull monoid with divisor theory $S \hookrightarrow F = \mathcal F (P)$ where $P = \{q_1, \ldots, q_s\}$ for some $s \ge 2$. Then, by Lemma \ref{3.3}.1,
\[
D = q_1 \cdot \ldots \cdot q_s F \cup \{1\} \subset F
\]
is a seminormal $v$-noetherian finitely primary monoid with $\wh D = F$. Now we define $H = D \cap \mathsf q (S)$. Since the inclusion $S \hookrightarrow F$ is a divisor theory, there is an element $a \in S \cap D$. Then for every $b \in S$ we have
\[
b = \frac{ba}{a} \in \mathsf q (D \cap S) \subset \mathsf q (H) \subset \mathsf q (S) \,,
\]
hence $\mathsf q (S) = \mathsf q (H)$ and $H = D \cap \mathsf q (H)$. Thus $H \subset D$ is saturated and primary. Furthermore,
\[
\wh H = \wh D \cap \mathsf q (S) = F \cap \mathsf q (S) = S \,,
\]
which implies that  $H$ is seminormal and $v$-noetherian  with $(H \DP \wh H) \ne \emptyset$ by Lemma \ref{3.1}.4 and Lemma \ref{3.2}.1. But $H$ is not finitely primary since $\wh H = S$ is not factorial.
\end{example}

Following the terminology of \cite{A-A-H-K-M-P-R09, B-P-A-A-H-K-M-R11, Ba-Ch13a}, we say that
 a monoid $D$ is  {\it bifurcus} if it is atomic with $D \ne D^{\times}$ and $\min \mathsf L (a) = 2$ for all $a \in D \setminus ( D^{\times} \cup \mathcal A (D) )$. Obviously, $D$ is bifurcus if and only if $D_{\red}$ is bifurcus.

\medskip
\begin{lemma} \label{3.5}
Let $D \subset F = F^\times \time [q_1,\ldots,q_s]$ be a seminormal
finitely primary monoid of rank $s$ and exponent $\alpha$. Then
\begin{enumerate}
\item $\mathcal A (D) = \{ \epsilon q_1^{k_1} \cdot \ldots \cdot q_s^{k_s} \mid  \epsilon \in F^{\times} \  \text{\rm and} \
      \min \{k_1, \ldots, k_s \} = 1 \}$.

\smallskip
\item If $s=1$, then $D$ is not bifurcus, $\mathsf c (D) \le 2$, and $D$ is half-factorial.

\smallskip
\item If $s \ge 2$, then $D$ is bifurcus, and $\mathsf c_{\eq} (D)\leq 2$.

\smallskip
\item If $S$ is a primary monoid with $D \subset S \subset \wh D$, then $S$ is  seminormal finitely primary,   and $\mathcal A (S) = \mathcal A (D)$.
\end{enumerate}
\end{lemma}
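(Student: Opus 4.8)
The plan is to base everything on the structural identity $D = q_1 \cdots q_s F \cup D^{\times}$, which follows at once from Lemma \ref{3.3}.1 since $D = D'$ by seminormality; in particular $D \setminus D^{\times} = q_1 \cdots q_s F$, we may take $\alpha = 1$, and $D$ is $v$-noetherian, hence atomic. For part 1, every non-unit of $D$ then has the form $\epsilon q_1^{k_1} \cdots q_s^{k_s}$ with all $k_j \ge 1$. If $\min_j k_j \ge 2$, then $a = (q_1 \cdots q_s)(\epsilon q_1^{k_1 - 1} \cdots q_s^{k_s-1})$ writes $a$ as a product of two non-units, so $a \notin \mathcal A(D)$; conversely, if some $k_j = 1$, then in any factorization $a = bc$ into non-units the $q_j$-exponents of $b$ and $c$ are each $\ge 1$ and sum to $1$, which is impossible. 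This gives the stated description of $\mathcal A(D)$.

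For part 2 ($s = 1$) the atoms are exactly the elements $\epsilon q_1$, so a non-unit $\epsilon q_1^{n}$ has $\mathsf L = \{n\}$; hence $D$ is half-factorial, and it is not bifurcus because $q_1^{3}$ is a non-atom non-unit with $\min \mathsf L (q_1^{3}) = 3 \ne 2$. Since $D$ is half-factorial we have $\mathsf c(D) = \mathsf c_{\eq}(D)$ (see the discussion around \eqref{basic2}), so $\mathsf c(D) \le 2$ will follow from $\mathsf c_{\eq}(D) \le 2$, proved below. For the bifurcus claim in part 3 ($s \ge 2$), a non-atom non-unit is $a = \epsilon q_1^{k_1} \cdots q_s^{k_s}$ with all $k_j \ge 2$ by part 1, and the splitting $a = (\epsilon q_1 q_2^{k_2 - 1} \cdots q_s^{k_s - 1})(q_1^{k_1 - 1} q_2 \cdots q_s)$ from the proof of Lemma \ref{3.0} writes $a$ as a product of two atoms; thus $\min \mathsf L(a) = 2$.

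The technical heart is $\mathsf c_{\eq}(D) \le 2$. I would fix $a = \epsilon q_1^{c_1} \cdots q_s^{c_s}$ and identify a length-$k$ factorization of $a$ with a decomposition $(c_1, \ldots, c_s) = \sum_{i=1}^{k} v^{(i)}$ into atom-vectors $v^{(i)} \in \N^{s}$ satisfying $\min_j v^{(i)}_j = 1$, recording the unit parts separately; note $k \le \min_j c_j$ by Lemma \ref{3.0}. A distance-$2$ move then amounts to re-splitting the sum $v^{(a)} + v^{(b)}$ of two of these vectors into another pair of atom-vectors, so it suffices to connect any such decomposition, by these moves, to the canonical one consisting of $u^{**} = (1 + (c_1 - k), 1, \ldots, 1)$, $u^{*} = (1, 1 + (c_2 - k), \ldots, 1 + (c_s - k))$, and $k - 2$ copies of $(1, \ldots, 1)$. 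The consolidation proceeds in stages: first, repeatedly re-split two vectors that both have first coordinate $\ge 2$ so as to concentrate all the first-coordinate excess into a single vector while the partner is left with first coordinate $1$; after this every vector but one has first coordinate $1$ and is therefore automatically an atom-vector, which removes the $\min = 1$ constraint in the remaining coordinates and lets me clean the distinguished vector down to $u^{**}$ and freely consolidate the rest coordinatewise. The main obstacle is exactly the preservation of the atom-condition $\min_j v_j = 1$ under the exchanges; the device that makes it work is to pin the first coordinate as the coordinate realizing the minimum for all but one vector, so that arbitrary redistribution in the other coordinates keeps every vector an atom. Finally the unit parts are reconciled by pairwise swaps, each a distance-$\le 2$ move as in the $s = 1$ case, which is legitimate because $F^{\times}$ is a group and the total product is preserved; for $s = 1$ only this last step is needed.

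For part 4, I would first show $S$ is finitely primary via the equivalent condition (c) stated just before Lemma \ref{3.1}: $S$ is primary by hypothesis, the inclusions $D \subset S \subset \wh D = \wh S$ force $\wh S = F$ so that $\wh S_{\red} \cong \N_0^{s}$, and $(S \DP \wh S) \supset (D \DP \wh D) \ne \emptyset$ because $D \subset S$ and $\wh S = \wh D$. Next, from $q_1 \cdots q_s F \subset D \subset S$ together with the inclusion $S \setminus S^{\times} \subset q_1 \cdots q_s F$ of \eqref{finitelyprimary} one obtains $S \setminus S^{\times} = q_1 \cdots q_s F = D \setminus D^{\times}$ and $S^{\times} = S \cap F^{\times}$. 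Seminormality of $S$ then follows: by Lemma \ref{3.3}.1 we have $S' = q_1 \cdots q_s F \cup {S'}^{\times}$, and any $u \in {S'}^{\times} \subset F^{\times}$ satisfies $u^{n} \in S \cap F^{\times} = S^{\times}$ for all large $n$, whence $u = u^{n+1}(u^{n})^{-1} \in S^{\times} \subset S$; thus $S' \subset S$ and $S = S'$. Finally, since $D$ and $S$ have the same set of non-units $q_1 \cdots q_s F$ and the same multiplication, the property of being a non-unit that is not a product of two non-units is identical for both, so $\mathcal A(S) = \mathcal A(D)$.
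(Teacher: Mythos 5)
Your proposal is correct and follows essentially the same route as the paper: part 1 from the structural identity $D = q_1\cdots q_s F \cup D^\times$ of Lemma \ref{3.3}.1, parts 2 and 3 by explicit distance-$2$ re-splittings that pin one coordinate at the value $1$ for all but one atom (the paper pins the last coordinate and absorbs the units during the consolidation, you pin the first and reconcile units at the end -- an immaterial difference), and part 4 via $\wh S = F$ and the transfer of the conductor. The only genuine deviation is that you prove seminormality of $S$ in part 4 by a direct computation with $S'$ rather than by citing Lemma \ref{3.2}.2, which is a perfectly valid (if slightly longer) alternative.
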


\begin{proof}
1. Lemma \ref{3.3} shows that
\[
D = D' = q_1 \cdot \ldots \cdot q_s F \cup D^{\times} \,,
\]
which implies immediately that the set of atoms is as asserted.

For the proof of 2. and 3. we may assume that $D$ is reduced.

\smallskip

2. Set $q = q_1$, and suppose that $a = \varepsilon q^n \in D$, where $n \in \N$ and $\varepsilon \in D^\times$. Then $\mathsf L(a) = \{n\}$, hence $D$ is half-factorial and not bifurcus. Since $z^* = q^{n-1} (\varepsilon q) \in \mathsf Z(a)$, it suffices to prove that for every factorization $z \in \mathsf Z(a)$ there is a $2$-chain concatenating $z$ and $z^*$. If $z \in \mathsf Z(a)$, then $z = (\varepsilon_1 q)\cdot \ldots \cdot (\varepsilon_nq)$, where $\varepsilon_1, \ldots, \varepsilon_n \in D^\times$ and $\varepsilon = \varepsilon_1 \cdot \ldots \cdot \varepsilon_n$. For $i \in [1,n]$, we set
\,$z_i = q^{i-1}(\varepsilon_1 \cdot \ldots \cdot \varepsilon_iq)(\varepsilon_{i+1}q) \cdot \ldots \cdot (\varepsilon_nq) \in \mathsf Z(a)$. Then $z_1 = z$, \ $z_n = z^*$ and $\mathsf d(z_i,z_{i+1}) \le 2$ for all $i \in [1,n-1]$. Hence $\mathsf c(a) \le 2$, and thus $\mathsf c(D) \le 2$.

3. By Lemma \ref{3.0}, $D$ is bifurcus. Let $a  = \varepsilon q_1^{\alpha_1} \cdot \ldots \cdot q_s^{\alpha_s} \in D$ \,(where $\varepsilon \in F^\times$ and $\alpha_1, \ldots, \alpha_s \in \N$), \ $z \in \mathsf Z(a)$ and $|z| = k \ge 2$. Then it follows that $\alpha_i \ge k$ for all $i \in [1,s]$, and we define $v_1,\,v_2, \,v_3 \in \mathsf Z(D)$ by
\[
v_1 = q_1 \cdot \ldots \cdot q_s, \quad v_2 = q_1 \cdot \ldots \cdot q_{s-1} q_s^{\alpha_s -k+1} \ \text{ and } \ v_3 = \varepsilon q_1^{\alpha_1-k+1} \cdot \ldots \cdot q_{s-1}^{\alpha_{s-1}-k+1} q_s.
\]
Then \,$z^* = v_1^{k-2}v_2v_3 \in \mathsf Z(a)$, and we construct a $2$-chain of equal-length factorizations concatenating $z$ and $z^*$. We proceed in two steps.

In the first step the product of two atoms $w w'$, where
\[
w = \epsilon q_1^{k_1} \cdot \ldots \cdot q_s^{k_s} \quad \text{and} \quad
w' = \epsilon' {q_1}^{k_1'} \cdot \ldots \cdot q_s^{k_s'} \,,
\]
is replaced by the product of two atoms $v v'$, where
\[
v = q_1 \cdot \ldots \cdot q_{s-1}q_s^{k_s+k_s'-1} \quad \text{and} \quad
v' = \epsilon \epsilon' q_1^{k_1+k_1'-1} \cdot \ldots \cdot q_{s-1}^{k_{s-1}+k_{s-1}'-1}q_s \,.
\]
This process ends at a factorization
$z' = w_1 \cdot \ldots \cdot w_{k-1} v_3$, where $w_i = q_1 \cdot \ldots \cdot q_{s-1} q_s^{\beta_i}$ with some exponent $\beta_i \in \N$ for all $i \in [1,k-1]$, and $\beta_1 + \ldots + \beta_{k-1} = \alpha_s-1$.

In the second step the product of two atoms $w w'$, where
\[
w = q_1 \cdot \ldots \cdot q_{s-1} q_s^{k_s} \quad \text{and} \quad
w' = q_1 \cdot \ldots \cdot q_{s-1} q_s^{k_s'}
\]
is replaced by the product of two atoms $v_1 v'$, where
\[
v_1 = q_1 \cdot \ldots \cdot q_s \quad \text{and} \quad
v' = q_1 \cdot \ldots \cdot q_{s-1} q_s^{k_s + k_s' - 1} \,.
\]
This process ends at $z^*$.

\smallskip
4. Since $\wh D = F$ is factorial, we get $F = \wh D \subset \wh S \subset \wh F = F$ and hence $\wh S = F$. Since $S$ is primary and $\emptyset \ne (D \DP \wh D) \subset (S \DP \wh D) = (S \DP \wh S)$, it follows that $S$ is finitely primary. Since $\emptyset \ne (D \DP \wh D) \subset (D \DP S)$, Lemma \ref{3.2}.2 implies that $S$ is seminormal, and hence $\mathcal A (S) = \mathcal A (D)$ by 1.
\end{proof}

\medskip
\begin{lemma} \label{3.6}
Let  $D=D_1\times D_2$ be a monoid with submonoids $D_1, D_2$, and let $a=a_1a_2 \in D$ with $a_1\in D_1$ and $a_2 \in D_2$. Then $\sup \{\mathsf c_{\eq}(a_1),\mathsf c_{\eq}(a_2)\}\leq \mathsf c_{\eq}(a)$, and equality holds if $|\mathsf L (a_2)|=1$. In particular,
 $\sup\{\mathsf c_{\eq}(D_1),\mathsf c_{\eq}(D_2)\}\leq \mathsf c_{\eq}(D)$, and equality holds if $D_2$ is half-factorial.
\end{lemma}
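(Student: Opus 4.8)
The plan is to reduce everything to the factorization structure of the direct product. Since $D=D_1\times D_2$, the atoms of $D_{\red}$ are the disjoint union of the atoms of $D_{1,\red}$ and $D_{2,\red}$, so $\mathsf Z(D)=\mathsf Z(D_1)\times\mathsf Z(D_2)$; writing $a=a_1a_2$, a factorization of $a$ is a pair $z=(z_1,z_2)$ with $z_1\in\mathsf Z(a_1)$, $z_2\in\mathsf Z(a_2)$, and $|z|=|z_1|+|z_2|$. Because the two atom sets are disjoint, $\gcd\big((z_1,z_2),(z_1',z_2')\big)=\big(\gcd(z_1,z_1'),\gcd(z_2,z_2')\big)$, and a direct computation gives the sandwich
\[
\max\{\mathsf d(z_1,z_1'),\mathsf d(z_2,z_2')\}\ \le\ \mathsf d\big((z_1,z_2),(z_1',z_2')\big)\ \le\ \mathsf d(z_1,z_1')+\mathsf d(z_2,z_2')\,,
\]
where the right-hand inequality becomes an \emph{equality} as soon as $|z_1|=|z_1'|$ (equivalently $|z_2|=|z_2'|$). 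The additivity of lengths together with this distance identity on each equal-length fibre are the only tools I need.

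For the inequality $\sup\{\mathsf c_{\eq}(a_1),\mathsf c_{\eq}(a_2)\}\le\mathsf c_{\eq}(a)$ I would prove $\mathsf c_{\eq}(a_1)\le\mathsf c_{\eq}(a)$, the case of $a_2$ being symmetric. Given $z_1,z_1'\in\mathsf Z(a_1)$ with $|z_1|=|z_1'|$, fix $z_2\in\mathsf Z(a_2)$ and apply the definition of $N:=\mathsf c_{\eq}(a)$ to the equal-length factorizations $(z_1,z_2),(z_1',z_2)$ of $a$. This yields a monotone $N$-chain in $\mathsf Z(a)$, which, having equal-length endpoints, consists entirely of factorizations of one common length; projecting onto the first coordinate produces a sequence from $z_1$ to $z_1'$ whose consecutive distances are $\le N$ by the left-hand inequality above. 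The delicate point — and the step I expect to be the main obstacle — is that this projection need not have constant length, since a step may shift length between the two coordinates and create a ``bulge'' in the first coordinate; the projected sequence is then an $N$-chain but not visibly a \emph{monotone} (hence equal-length) one. The natural repair is to collapse such excursions using the metric property of $\mathsf d$, but making this preserve the bound $N$ is exactly the crux. A partial remedy is to take $z_2$ of extremal length: if $|z_2|=\max\mathsf L(a_2)$ then every length-$|z_1|+|z_2|$ factorization $(x,t)$ of $a$ has $|t|\le|z_2|$, hence $|x|\ge|z_1|$, which pins the first coordinate to constant length precisely when $z_1,z_1'$ lie at the top of $\mathsf L(a_1)$ (and symmetrically for the bottom). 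Controlling the intermediate lengths for interior levels is where the real work lies.

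The equality statement when $|\mathsf L(a_2)|=1$ sidesteps this obstacle entirely. Now every factorization of $a_2$ has the same length $m$, so in any equal-length chain in $\mathsf Z(a)$ the second coordinate has constant length and the first-coordinate projection is automatically of constant length; the projection argument above then gives $\mathsf c_{\eq}(a_1)\le\mathsf c_{\eq}(a)$ (and symmetrically for $a_2$) without difficulty. For the reverse inequality put $M=\sup\{\mathsf c_{\eq}(a_1),\mathsf c_{\eq}(a_2)\}$ and take $(x,t),(x',t')\in\mathsf Z(a)$ of equal length; since $|t|=|t'|=m$ we get $|x|=|x'|$. Choosing an equal-length $M$-chain $x=x^{(0)},\dots,x^{(r)}=x'$ in $\mathsf Z(a_1)$ and an equal-length $M$-chain $t=t^{(0)},\dots,t^{(s)}=t'$ in $\mathsf Z(a_2)$, I concatenate the $L$-shaped chain
\[
(x^{(0)},t),\dots,(x^{(r)},t)=(x',t^{(0)}),\dots,(x',t^{(s)})\,.
\]
All its members have length $|x|+m$, and by the distance identity each step has distance $\mathsf d(x^{(l-1)},x^{(l)})$ or $\mathsf d(t^{(l-1)},t^{(l)})$, hence $\le M$. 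This gives $\mathsf c_{\eq}(a)\le M$, and with the previous inequality equality follows.

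Finally, the ``in particular'' monoid statement does not even require the subtle general case: applying the element statement with trivial second factor, $\mathsf c_{\eq}(b_1)=\mathsf c_{\eq,D}\big((b_1,1)\big)\le\mathsf c_{\eq}(D)$ for every $b_1\in D_1$ (here the second coordinate is forced trivial, so no bulge can occur), and symmetrically for $D_2$; taking suprema yields $\sup\{\mathsf c_{\eq}(D_1),\mathsf c_{\eq}(D_2)\}\le\mathsf c_{\eq}(D)$. If $D_2$ is half-factorial then $|\mathsf L(a_2)|=1$ for every $a_2\in D_2$, so the element-wise equality applies to every $a=a_1a_2$; taking suprema then gives $\mathsf c_{\eq}(D)=\sup\{\mathsf c_{\eq}(D_1),\mathsf c_{\eq}(D_2)\}$.
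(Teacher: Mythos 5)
Your treatment of the two statements that carry all the weight in the sequel is complete and is essentially the paper's own argument. For the equality under $|\mathsf L(a_2)|=1$ you observe that the second coordinate of every factorization of $a$ has one fixed length, so in any equal-length pair both coordinate lengths are pinned and the projections give $\mathsf c_{\eq}(a_i)\le\mathsf c_{\eq}(a)$; for the converse your ``L-shaped'' chain $(x^{(0)},t),\dots,(x^{(r)},t)=(x',t^{(0)}),\dots,(x',t^{(s)})$ is verbatim the paper's chain $z_{1,1}z_2,\dots,z_{1,n_1}z_2=z_1'z_{2,1},\dots,z_1'z_{2,n_2}$, justified by the same additivity of $\mathsf d$ on products with disjoint atom sets. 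The ``in particular'' statements you derive from the case $a_2=1$ (equivalently, from divisor-closedness of $D_1,D_2$ in $D$) together with the element-wise equality when $D_2$ is half-factorial; these are precisely the two consequences of the lemma that are invoked later, in the proof of Theorem \ref{3.7}.2(b),(c).

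What you do not prove is the unconditional inequality $\sup\{\mathsf c_{\eq}(a_1),\mathsf c_{\eq}(a_2)\}\le\mathsf c_{\eq}(a)$ when $a_2$ is a nonunit with $|\mathsf L(a_2)|>1$, and you flag the ``bulge'' as the obstacle. Formally this is a gap relative to the statement as printed, but you should know that the paper disposes of this step with the single clause ``since $\mathsf Z(a_i)\subset\mathsf Z(a)$'', which is not literally true ($\mathsf Z(a_1)$ sits inside $\mathsf Z(a)$ only after multiplication by a fixed $z_2\in\mathsf Z(a_2)$) and which, so read, founders on exactly the difficulty you isolate: an equal-length chain in $\mathsf Z(a)$ may shift length between the two coordinates, and its first-coordinate projection is then an $N$-chain of varying lengths, hence not monotone. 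Your worry is substantive rather than a failure of technique. If $\mathsf Z_{\ell}(a_1)$ consists of exactly two factorizations at distance $\delta$ which are linked only through factorizations of adjacent lengths, then pairing with an $a_2$ whose set of lengths has two elements allows a monotone chain in $\mathsf Z(a)$ to detour through $\mathsf Z_{\ell\pm1}(a_1)$ at a per-step cost strictly below $\delta$; such a configuration can be realized concretely (take $D_1$ the reduced finitely generated monoid on atoms $s_1,\dots,s_7,t_1,\dots,t_7$ subject to $s_5s_6s_7=t_1t_2t_3t_4$ and $s_1s_2s_3s_4=t_5t_6t_7$, with $a_1=s_1\cdots s_7$, and $D_2=\langle 2,3\rangle$ with $a_2=6$; one computes $\mathsf c_{\eq}(a_1)=7$ but $\mathsf c_{\eq}(a)=6$). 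So the element-wise inequality cannot be rescued by projection alone and genuinely needs a hypothesis such as $|\mathsf L(a_2)|=1$, under which you do prove it. In short: the parts of the lemma you establish are exactly the parts the paper establishes and uses; the part you leave open is the part the paper asserts without a valid argument, and it is the statement itself, not your method, that is at fault there.
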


\begin{proof}
We may assume that $D$ is reduced, and we tacitly apply \cite[Proposition 1.2.11]{Ge-HK06a}. Since $\mathsf Z(a_i) \subset \mathsf Z(a)$, it follows that $\mathsf c_{\rm eq}(a_i) \le \mathsf c_{\rm eq}(a)$ for $i \in \{1,2\}$. Assume now that $|\mathsf L(a_2)| =1$, and let $z,\,z' \in \mathsf Z(a)$ be such that $|z| = |z'| \ge 2$. Then $z = z_1z_2$ and $z' = z_1'z_2'$, where $z_i' \in \mathsf Z(a_i)$ for $i \in \{1,2\}$, hence $|z_2| =|z_2'|$ (since $|L(a_2)|=1$) and thus also $|z_1| = |z_1'|$. Let $N = \sup\{ \mathsf c_{\rm eq}(a_1),  \mathsf c_{\rm eq}(a_2)\}$. For $i \in \{1,2\}$, there exists an $N$-chain of equal-length factorizations $z_i = z_{i,1}, \ldots, z_{i,n_i} = z_i'$ in $\mathsf Z(a_i)$, and therefore \,$z =z_{1,1}z_2, \ldots, z_{1,n_1}z_2 = z_1'z_{2,1} , \ldots, z_1' z_{2,n_2} = z_1'z_2'=z'$ is an $N$-chain of equal-length factorizations concatenating $z$ and $z'$.
\end{proof}

We present the main result of the present section. It will play a central role in the proof of our main arithmetical result (see Theorem \ref{5.6}).

\medskip
\begin{theorem} \label{3.7}
Let $(D_i)_{i \in I}$ be a family of atomic monoids which are either
bifurcus or have catenary degree at most two, let $D$ be their
coproduct, and set $J = \{i \in I \mid D_i \ \text{is bifurcus} \}$.
\begin{enumerate}
\item If $J = \emptyset$, then $\mathsf c (D) \le 2$.

\smallskip
\item Suppose that $J \ne \emptyset$.
      \begin{enumerate}
      \smallskip
      \item  $\mathcal U_k (D) = \N_{\ge 2}$ for all $k \ge 2$, $\Delta (D) = \{1\}$, and
             $\mathsf c (D) = \mathsf c_{\adj} (D) = 3$.

      \smallskip
      \item If $|J| = 1$, say $J = \{j\}$, and $D_j$ is seminormal finitely primary,  then $\mathsf c_{\eq} (D) = 2$
            and $\mathsf c_{\monn} (D)  = 3$.

      \smallskip
      \item If $|J| \ge 2$, then $\mathsf c_{\monn} (D) = \mathsf c_{\eq} (D) = \sup (\{ \mathsf c_{\eq} (D_j) \mid j \in J \}\cup\{5\})$.
      \end{enumerate}
\end{enumerate}
\end{theorem}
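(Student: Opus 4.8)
The plan is to work throughout with reduced monoids (bifurcacy and all catenary degrees are invariant under $D \mapsto D_{\red}$) and to exploit the coproduct structure: in $D = \coprod_{i} D_i$ one has $\mathcal A(D) = \bigcup_i \mathcal A(D_i)$, each $D_i$ is divisor-closed in $D$, and for $a = \prod_i a_i$ the factorizations and lengths split as $\mathsf Z_D(a) = \prod_i \mathsf Z_{D_i}(a_i)$ and $\mathsf L_D(a) = \sum_i \mathsf L_{D_i}(a_i)$. The workhorse is \emph{componentwise chaining}: two factorizations $z = \prod z_i$ and $z' = \prod z_i'$ with $\pi(z) = \pi(z')$ can be concatenated by altering one component at a time, and a step changing only the $i$-th component has $\mathsf d$-distance equal to the distance computed inside $D_i$. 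This gives $\mathsf c(D) \le \sup_i \mathsf c(D_i)$ and, when the two factorizations agree componentwise in length, the analogous bound for $\mathsf c_{\eq}$; the reverse inequalities hold because the $D_i$ are divisor-closed. Part~(1) is then immediate: if $J = \emptyset$ then every $\mathsf c(D_i) \le 2$, whence $\mathsf c(D) \le 2$.

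For~(2)(a) I first record that in \emph{any} bifurcus monoid every set of lengths has the form $[2,m]$ (or $\{1\}$): if $k \ge 3$ lies in $\mathsf L(a)$, take a length-$k$ factorization, pick three of its atoms $u_1u_2u_3$, and replace them by a length-$2$ factorization of $u_1u_2u_3$ (which exists since $\min \mathsf L(u_1u_2u_3) = 2$), producing a length-$(k-1)$ factorization. Hence $\Delta(D_j) = \{1\}$ for $j \in J$, while $\Delta(D_i) = \emptyset$ for $i \notin J$ (those $D_i$ are half-factorial, as $\mathsf c(D_i) \le 2$ forces $\sup \Delta(D_i) \le 0$ by (\ref{basic3})). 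Since $\mathsf L_D(a)$ is a sumset of intervals and singletons it is an interval, so $\Delta(D) = \{1\}$. For the unions, products of $n$ atoms in $D_j$ have length $n$ and minimal length $2$, giving $\mathcal U_2(D_j) = \N_{\ge 2}$; transporting this to $D$ and invoking Lemma~\ref{2.1} yields $\mathcal U_k(D) = \N_{\ge 2}$ for all $k \ge 2$. Now (\ref{basic3}) gives $\mathsf c(D) \ge 3$, whereas the merge argument above shows $\mathsf c \le 3$ for every bifurcus monoid (reduce both factorizations to length $2$ by distance-$3$ merges, then join the two length-$2$ factorizations in one step of distance $\le 2$), so $\mathsf c(D) \le \sup_i \mathsf c(D_i) \le 3$ and thus $\mathsf c(D) = 3$. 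Finally $\mathsf c_{\adj}(D) = 3$: the bound $\ge 3$ is (\ref{basic1}), and for $\le 3$ I produce, for adjacent lengths $k,k+1$, a length-$k$ and a length-$(k+1)$ factorization differing only in one bifurcus component through a single merge, hence at distance $3$.

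For~(2)(b) write $D = D_j \times E$ with $E = \coprod_{i \ne j} D_i$; as a coproduct of half-factorial monoids $E$ is half-factorial, so $\mathsf c_{\eq}(E) = \mathsf c(E) \le 2$. Lemma~\ref{3.6} then gives $\mathsf c_{\eq}(D) = \sup\{\mathsf c_{\eq}(D_j), \mathsf c_{\eq}(E)\}$, and $\mathsf c_{\eq}(D_j) \le 2$ by Lemma~\ref{3.5}(3); an explicit element of the finitely primary monoid $D_j$ with two distinct equal-length factorizations gives $\mathsf c_{\eq}(D) \ge 2$, so $\mathsf c_{\eq}(D) = 2$. Combined with $\mathsf c_{\adj}(D) = 3$ from~(a), (\ref{basic2}) yields $\mathsf c_{\monn}(D) = \sup\{2,3\} = 3$.

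For~(2)(c) put $C = \sup(\{\mathsf c_{\eq}(D_j) \mid j \in J\} \cup \{5\})$. Once $\mathsf c_{\eq}(D) = C$ is shown, $C \ge 5 > 3 = \mathsf c_{\adj}(D)$ and (\ref{basic2}) give $\mathsf c_{\monn}(D) = \mathsf c_{\eq}(D) = C$. The bound $\mathsf c_{\eq}(D) \ge \mathsf c_{\eq}(D_j)$ for each $j \in J$ is divisor-closedness. For $\mathsf c_{\eq}(D) \ge 5$ I use two bifurcus components: with atoms $u \in D_{j_1}$ and $v \in D_{j_2}$ I consider $a = u^3 v^3$, whose length-$5$ factorizations are of type (length $2$ in the first factor, length $3$ in the second) and its mirror; a length-preserving chain between the two cross factorizations must, at the step where the first-factor length passes from $2$ to $3$, simultaneously trade a length-$2$ for a length-$3$ factorization in $D_{j_1}$ and a length-$3$ for a length-$2$ one in $D_{j_2}$, and arranging these to be coprime forces that step to have distance $5$. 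For the upper bound $\mathsf c_{\eq}(D) \le C$ I connect equal-length $z, z'$ in two stages: redistribute the length vector $(|z_i|)_i$ to $(|z_i'|)_i$ by unit transfers, each a single length-preserving step that merges three atoms in a component being decreased (always possible by bifurcacy, contributing distance $3$) and, after an equal-length rearrangement of the component being increased at cost $\le \mathsf c_{\eq}(D_j)$ so that a splittable pair is available, splits two atoms into three there, so the combined step has distance $5$; then finish with within-component equal-length rearrangements of cost $\le \sup_j \mathsf c_{\eq}(D_j)$. Every step is $\le C$. I expect the main obstacle to be the lower bound $\ge 5$: one must guarantee that the forced length-$2$/length-$3$ transition cannot be rerouted through cheaper overlapping factorizations, i.e. that each bifurcus factor really contains a suitably \emph{isolated} coprime length-$2$/length-$3$ configuration (transparent in the finitely primary case via elements such as $q_1^3q_2^3$, but requiring care in general); keeping the transfer steps both length-preserving and of distance $\le C$ is the corresponding difficulty in the upper bound.
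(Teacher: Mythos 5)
Your proposal is correct and follows essentially the same route as the paper: coproduct splitting and the merge/split mechanics of bifurcus monoids for (2)(a), Lemma \ref{3.6} together with Lemma \ref{3.5} for (2)(b), and for (2)(c) a two-bifurcus-component configuration for the lower bound plus an induction on $\sum_{\nu}\bigl||x_\nu|-|y_\nu|\bigr|$ whose inductive step is exactly your combined merge-in-one-component/split-in-another move of distance $5$. The ``isolation'' worry you flag for the lower bound is unfounded: if $w,w'$ are distinct factorizations of the same element with $|w|=2$ and $|w'|=3$, a common atom could be cancelled to exhibit an atom as a product of two atoms, so $\gcd(w,w')=1$ automatically; hence at the step of any equal-length chain where the pair of component lengths switches from $(2,3)$ to $(3,2)$ the distance is exactly $2+3=5$, with no coprimality hypothesis on the chosen configurations needed.
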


\begin{proof}
1. Since $\mathsf c (D) = \sup \{ \mathsf c (D_i) \mid i \in I \}$
(\cite[Proposition 1.6.8]{Ge-HK06a}), it  follows that $\mathsf c
(D) \le 2$.

\smallskip
2. (a) Let $S$ be a bifurcus monoid, $u \in \mathcal A (S)$, $a \in S$, and $\ell, m \in \N$ with $2 \le \ell < m$. . Then obviously
 $\Delta (S) = \{1\}$.  Since $u^{m-\ell+2}$ can be written as a product of two atoms, $u^m = u^{m - \ell+2}u^{\ell-2}$ can be written as a product of $\ell$ atoms. Thus $\ell \in \mathcal U_m (S)$, $m \in \mathcal U_{\ell} (S)$, and thus $\mathcal U_k (S) = \N_{\ge 2}$ for all $k \ge 2$. Since any product of
three atoms can be written as a product of two atoms, there is a
$3$-chain of factorizations starting from any factorization $z \in
\mathsf Z (a)$ to a factorization $z^* \in \mathsf Z (a)$ of length
$|z^*| = 2$. Thus we get $\mathsf c (a) \le 3$ and hence $\mathsf c
(S) = 3$.

This shows that $\mathsf c (D) = 3$, and if $D_i$ is bifurcus for some $i \in I$, then
$\N_{\ge 2} = \mathcal U_k (D_i) \subset \mathcal U_k (D) \subset \N_{\ge 2}$ for all $k \ge 2$.
Furthermore, since $\sup \Delta (D) = \sup \bigcup_{i\in I} \Delta (D_i) = 1$ (\cite[Proposition 1.4.5]{Ge-HK06a}), it follows that
$\Delta (D) = \{1\}$.

It remains to show that $\mathsf c_{\adj} (D) = 3$. Let $a \in D$
with $|\mathsf L (a)| > 1$. We show that $\mathsf c_{\adj} (a) = 3$.
To do so, we pick $m \in \N$ such that $m-1, m \in \mathsf L
(a)$, and have to verify that $\dd \big( \mathsf Z_{m-1} (a),
\mathsf Z_m (a) \big) \le 3$. We have  $a = a_1 \cdot \ldots \cdot a_n$ where $i_1, \ldots, i_n \in I$ are distinct and $a_{\nu} \in
D_{i_{\nu}}$. Since
\[
\mathsf L (a) = \mathsf L (a_1) + \ldots + \mathsf L (a_n) \quad \text{and} \quad m > \min \mathsf L (a) = \sum_{\nu=1}^n \min \mathsf L (a_{\nu}) \,,
\]
there is an $\nu \in [1,n]$, say $\nu=1$, such that $m= m_1 + \ldots +
m_n$, $m_1 > \min \mathsf L (a_1)$ and $m_{\nu} \in \mathsf L (a_{\nu})$
for all $\nu \in [1,n]$. This shows that $D_{i_1}$ is not
half-factorial, hence $D_{i_1}$ is bifurcus, and for simplicity of
notation we assume that $D_{i_1}$ is reduced. Suppose that $z_1 =
u_1 \cdot \ldots \cdot u_{m_1} \in \mathsf Z (a_1)$, with $u_1,
\ldots, u_{m_1} \in \mathcal A ( {D_{i_1}})$, and $z_i \in \mathsf Z
(a_i)$ with $|z_i| = m_i$ for all $i \in [2,n]$. By assumption,
there are $v_1, v_2 \in \mathcal A (D_{i_1})$ such that $u_1u_2u_3 =
v_1v_2$, and we set $z_1' = v_1v_2u_4 \cdot \ldots \cdot u_{m_1}$.
Then $z_1' \in \mathsf Z (a_1)$, $\dd (z_1, z_1') = 3$, $z' =
z_1' z_2 \cdot \ldots \cdot z_n \in \mathsf Z_{m-1} (a)$ and
\[
\dd \big( \mathsf Z_{m-1} (a), \mathsf Z_m (a) \big) = \dd (z', z) = 3 \,.
\]

\smallskip
2.(b) Suppose that $|J| = 1$. Then 1. implies that $D = S_1 \time
S_2$ where $S_1$ is seminormal finitely primary and $\mathsf c (S_2)
\le 2$. By 2.(a) it suffices to show that $\mathsf c_{\eq} (D) \le
2$, and for that it remains to show by Lemma \ref{3.6}.2 that $\mathsf c_{\eq} (S_1) \le
2$, which follows from Lemma \ref{3.5}.

\smallskip
2.(c) Suppose that $|J| \ge 2$. By 2.(a) it remains to show that
$\mathsf c_{\eq} (D) = \sup \{ \mathsf c_{\eq} (D_j), 5 \mid j \in J
\}$. We write $D$ in the form $D = S_1 \time S_2$ where $S_1$ is the coproduct of $(D_i)_{i\in J}$ and $\mathsf c (S_2) \le
2$. By Lemma \ref{3.6}.2 it remains to show that $\mathsf c_{\eq} (S_1) =  \sup \{ \mathsf
c_{\eq} (D_j), 5 \mid j \in J \}$. First we outline that the term on
the right side is a lower bound for $\mathsf c_{\eq} (S_1)$. Let
$\alpha, \beta \in J$, and without restriction we suppose that
$D_{\alpha}$ and $D_{\beta}$ are reduced. There are atoms $a_1,
\ldots, a_5 \in \mathcal A (D_{\alpha})$ and $b_1, \ldots, b_5 \in
\mathcal A (D_{\beta})$ such that $a_1a_2 = a_3a_4a_5$ and $b_1b_2 =
b_3b_4b_5$. Then $z = a_1a_2b_3b_4b_5$ and $z' = a_3a_4a_5b_1b_2$
are factorizations of $c = a_1a_2b_1b_2 \in D_{\alpha} \time
D_{\beta} \subset S_1$ with $\mathsf c_{\eq} (c) \ge \dd (z, z')
= 5$. Since $\mathsf c_{\eq} (D_j) \le \mathsf c_{\eq} (S_1)$ for all
$j \in J$, we infer that $\sup \{ \mathsf c_{\eq} (D_j), 5 \mid j
\in J \} \le \mathsf c_{\eq} (S_1)$.

To verify equality, we assume without restriction that $c_{\eq} (D_j) < \infty$ for all $j\in J$. Let $a = a_1 \cdot \ldots \cdot a_n$, where $n
\in \N$, $a_{\nu} \in D_{i_{\nu}}$ for all $\nu \in [1,n]$
and $i_1, \ldots, i_n \in J$ distinct, $x = x_1 \cdot \ldots \cdot
x_n$, $y = y_1 \cdot \ldots \cdot y_n \in \mathsf Z (a)$, where
$x_{\nu}, y_{\nu} \in \mathsf Z (a_{\nu})$ for all $\nu \in [1, n]$,
and
\[
\sum_{\nu=1}^n |x_{\nu}| = |x| = |y| = \sum_{\nu=1}^n |y_{\nu}| \,.
\]
If $n=1$, then $a = a_1 \in D_{i_1}$ and $\mathsf c_{\eq} (a) \le
\mathsf c_{\eq} (D_{i_1}) \le \sup \{ \mathsf c_{\eq} (D_j), 5 \mid
j \in J \} $. Suppose that $n \ge 2$. We proceed by induction on
\[
\psi (x,y) = \sum_{\nu =1}^n \big| |x_{\nu}| - |y_{\nu}| \big| \,.
\]
Suppose that $\psi (x,y) = 0$. Then, for all $\nu \in [1,n]$, we
have $|x_{\nu}| = |y_{\nu}|$, and hence there is a $\mathsf c_{\eq}
(D_{i_{\nu}})$-chain of equal-length factorizations between
$x_{\nu}$ and $y_{\nu}$. Therefore, there is a $\max \{ \mathsf
c_{\eq} (D_{i_1}), \ldots, \mathsf c_{\eq} (D_{i_n})\}$-chain of
equal-length factorizations between $x$ and $y$.

Suppose that $\psi (x,y) > 0$. Then there exist $\nu, \nu' \in [1,
n]$, say $\nu = 1$ and $\nu' =2$, such that $|x_1| > |y_1|$ and
$|x_2| < |y_2|$, and suppose that $D_{i_1}, D_{i_2}$ are reduced. If $x_1 = u_1 \cdot \ldots \cdot u_k$ with $k
=|x_1|$ and $u_1, \ldots, u_k \in \mathcal A (D_{i_1})$, then $k \ge
3$, there are $u_1', u_2' \in \mathcal A (D_{i_1})$ such that
$u_1u_2u_3 = u_1'u_2'$ and $x_1' = u_1'u_2'u_4 \cdot \ldots \cdot
u_k \in \mathsf Z (a_1)$. Since $\max \mathsf L (a_2) \ge |y_2| >
|x_2|$ and $\mathsf c_{\adj} (a_2) \le 3$, there exist $x_2'', x_2'
\in \mathsf Z (a_2)$ and
$|x_2'| = |x_2''|+1= |x_2|+1$ and $\dd (x_2'', x_2') \le \mathsf c_{\adj} (a_2) \le 3$. Note that $x_2'$ arises from $x_2''$
by replacing two atoms from $x_2''$ by three new atoms.  There is  a $\mathsf c_{\eq} (D_{i_2})$-chain of
equal-length factorizations between $x_2$ and $x_2''$, and hence
there is a $\mathsf c_{\eq}(D_{i_2})$-chain of equal-length
factorizations between $x=x_1 \cdot \ldots \cdot x_k$ and $x'' =
x_1x_2''x_3 \cdot \ldots \cdot x_k$, and we set $x' = x_1'x_2'x_3
\cdot \ldots \cdot x_k$. Since $|x| =|x'|=|x''|$, $\dd (x'',
x') = \dd (x_1x_2'', x_1'x_2') = 5$ and
\[
\psi (x', y) = \big|   |x_1'| - |y_1| \big| + \big| |x_2'| - |y_2|
\big| + \sum_{\nu =3}^n \big| |x_{\nu}| - |y_{\nu}| \big| < \psi
(x,y) \,,
\]
the assertion follows from the induction hypothesis.
\end{proof}

\bigskip
\section{Class groups, transfer homomorphisms, and $T$-block monoids} \label{4}
\bigskip

We recall the concepts of abstract class groups, of
transfer homomorphisms, and  of $T$-block monoids (details can be found in \cite[Sections 2.4, 3.2, and
3.4]{Ge-HK06a}). These are the main tools for
arithmetical investigations of weakly Krull monoids. Proposition \ref{L:GlobalTransfer} will be crucial in the proof of Theorem \ref{5.6}.

\smallskip
Let $D$ be a monoid and $H \subset D$ a saturated submonoid. Then the group
\[
\cls{H}{D}= \quot(D) / \left(D^\times\quot (H)\right)
\]
is called the {\it class group} of  $H$ in $D$.  As usual we write this group additively. For $a
\in \quot(D)$, we let $[a]=[a]_{D/H}\in\cls{H}{D}$ be its image in $\cls{H}{D}$. If $D$ is reduced,
then, since $H \subset D$ is saturated, we have $H = \{a \in D \mid [a] = 0 \}$. If $P \subset D$
is a set of primes and $T \subset D$ a submonoid without primes such that $D = \mathcal F (P) \time
T$, then $\{ [p] \mid p \in P\} \subset  \cls{H}{D}$ is the set of classes containing primes. We say that $H \subset D$ is {\it cofinal}
if for
every $a \in D$ there is an $u \in H$ such that $a \t u$. If  $\mathcal H = \{aH
\mid a \in H \}$ is the monoid of principal ideals of $H$, then $\mathcal H \subset \mathcal I_v^* (H)$
is saturated and cofinal.
The map $\partial_H \colon H_\red \to \mathcal I_v^*(H)$, defined by $\partial(aH^\times) = aH$ for all $a \in H$, is a monomorphism, $\partial_H (H_\red) = \mathcal H$, and $\mathcal C_v(H) = \coker\,[\mathsf q(\partial_H) \colon \mathsf q(H)/H^\times \to \mathsf q(\mathcal I_v^*(H))] = \cls{\mathcal H}{\mathcal I_v^* (H)}$ is the
$v$-class group of $H$.

\medskip
\begin{lemma} \label{4.0}
Let $H \subset D$ be a saturated submonoid. Then the inclusion $H \hookrightarrow D$ induces an isomorphism \,$\varphi   \colon H_{\rm red} \to H^* = HD^\times/D^\times$, \ $H^* \subset D_{\rm red}$ is a saturated submonoid, and $\varphi$ induces an isomorphism $\overline \varphi \colon \mathcal C(H,D) \to \mathcal C(H^*, D_{\rm red})$ mapping the set of all classes containing primes onto the set of all classes containing primes. We will identify $\mathcal C(H,D)$ and $\mathcal C(H^*, D_{\rm red})$.
\end{lemma}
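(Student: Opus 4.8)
The plan is to construct the map $\varphi$ explicitly and then verify each asserted property in turn. First I would define $\varphi \colon H_{\rm red} \to H^* = HD^\times/D^\times$ by sending $aH^\times$ to the class $aD^\times$. One checks this is well-defined: if $aH^\times = bH^\times$ then $a = bu$ with $u \in H^\times \subset D^\times$, so $aD^\times = bD^\times$. It is clearly a surjective monoid homomorphism onto $H^* = \{aD^\times \mid a \in H\}$. For injectivity, suppose $aD^\times = bD^\times$ with $a, b \in H$; then $a = b\varepsilon$ for some $\varepsilon \in D^\times$, whence $\varepsilon = ab^{-1} \in \mathsf q(H) \cap D^\times$. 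Since $H \subset D$ is saturated, $\mathsf q(H) \cap D = H$, so $\varepsilon \in H \cap D^\times = H^\times$, giving $aH^\times = bH^\times$. Thus $\varphi$ is an isomorphism.

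Next I would show $H^* \subset D_{\rm red}$ is saturated. Working inside $\mathsf q(D_{\rm red}) = \mathsf q(D)/D^\times$, suppose $xD^\times \in \mathsf q(H^*)$ and $xD^\times \in D_{\rm red}$ for some $x \in \mathsf q(D)$. The first condition gives $x \in \mathsf q(H)D^\times$ and the second gives $x \in DD^\times = D$; since $H \subset D$ is saturated, $\mathsf q(H) \cap D = H$, and a short computation modulo $D^\times$ yields $xD^\times \in H^*$. So $H^* = \mathsf q(H^*) \cap D_{\rm red}$.

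The heart of the statement is the induced isomorphism $\overline\varphi$ on class groups. Here I would use that the quotient group $\mathsf q(\varphi)\colon \mathsf q(H_{\rm red}) \to \mathsf q(H^*)$ is the isomorphism induced by $\varphi$, together with the canonical identification $\mathsf q(D_{\rm red}) = \mathsf q(D)/D^\times$. By definition $\cls{H}{D} = \mathsf q(D)/(D^\times \mathsf q(H))$ and $\cls{H^*}{D_{\rm red}} = \mathsf q(D_{\rm red})/\mathsf q(H^*)$. The natural projection $\mathsf q(D) \to \mathsf q(D)/D^\times = \mathsf q(D_{\rm red})$ carries $D^\times\mathsf q(H)$ onto $\mathsf q(H^*)$, so it descends to a well-defined homomorphism $\overline\varphi\colon \cls{H}{D} \to \cls{H^*}{D_{\rm red}}$, $[a]_{D/H} \mapsto [aD^\times]_{D_{\rm red}/H^*}$; surjectivity is immediate and injectivity follows since the kernel of $\mathsf q(D)\to\cls{H^*}{D_{\rm red}}$ is exactly $D^\times\mathsf q(H)$. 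Finally, for the statement about primes: an element $p \in D$ is prime in $D$ if and only if its image $pD^\times$ is prime in $D_{\rm red}$, and $\overline\varphi([p]) = [pD^\times]$ by construction, so $\overline\varphi$ maps the set of classes containing primes bijectively onto the corresponding set. I expect the main obstacle to be purely bookkeeping: keeping the three quotient groups $\mathsf q(H_{\rm red})$, $\mathsf q(H^*)$, and $\mathsf q(D)/D^\times$ and their respective images straight, and checking that saturatedness of $H \subset D$ is invoked at exactly the two points (injectivity of $\varphi$ and saturatedness of $H^*$) where it is genuinely needed, rather than being any deep difficulty.
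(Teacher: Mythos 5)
Your proposal is correct and takes essentially the same approach as the paper, whose own proof is just a one\-sentence verification that the map $aH^\times \mapsto aD^\times$ restricts to an isomorphism onto $H^*$, that $H^*\subset D_{\rm red}$ is saturated, and that $\mathsf q(H^*)=\mathsf q(H)D^\times/D^\times$, from which $\overline\varphi$ is obtained exactly as you describe. Your write-up simply supplies the routine details (well\-definedness, injectivity via saturation, the third\-isomorphism\-theorem identification of the two class groups, and the prime\-class correspondence) that the paper leaves implicit.
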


\begin{proof}
Consider the  map $H_{\red} \to D_{\red}$, defined by $aH^{\times} \mapsto a D^{\times}$ for all $a \in H$. Its restriction $\varphi \colon H_{\red} \to \varphi (H_{\red}) = \{ a D^{\time} \mid a \in H \}= HD^{\times}/D^{\times}=H^*$ is an isomorphism, $H^* \subset D_{\red}$ is saturated, $\mathsf q (H^*) = \mathsf q (H) D^{\times}/D^{\times}$, and hence $\overline \varphi$ is an isomorphism having the asserted property.
\end{proof}

\smallskip
Let $H$ and $B$ be atomic monoids and $\theta \colon H \to B$ a
monoid homomorphism. Then $\theta$ is called a \ {\it transfer
homomorphism} \ if it has the following properties:
\begin{enumerate}
\item[{\bf (T\,1)\,}] $B = \theta(H) B^\times$ \ and \ $\theta
^{-1} (B^\times) = H^\times$.

\smallskip

\item[{\bf (T\,2)\,}] If $u \in H$, \ $b,\,c \in B$ \ and \ $\theta
(u) = bc$, then there exist \ $v,\,w \in H$ \ such that \ $u = vw$,
\ $\theta (v) \simeq b$ \ and \ $\theta (w) \simeq c$.
\end{enumerate}
Clearly, $\theta$ is a transfer homomorphism if and only if $\theta_{\red}$ is a transfer homomorphism.
Suppose that $\theta$ is a transfer homomorphism. Then it gives rise
to a unique extension $\overline \theta \colon \mathsf Z(H) \to
\mathsf Z(B)$ satisfying
\[
\overline \theta (uH^\times) = \theta (u)B^\times \quad \text{for
all} \quad u \in \mathcal A(H)\,.
\]
For $a \in H$, we denote by \ $\mathsf c (a, \theta)$ \ the smallest
$N \in \N_0 \cup \{\infty\}$ with the following property:
\begin{enumerate}
\item[]
If $z,\, z' \in \mathsf Z_H (a)$ and $\overline \theta (z) =
\overline \theta (z')$, then there exist some $k \in \N_0$ and
factorizations $z=z_0, \ldots, z_k=z' \in \mathsf Z_H (a)$ such that
\ $\overline \theta (z_i) = \overline \theta (z)$ and \ $\dd
(z_{i-1}, z_i) \le N$ for all $i \in [1,k]$ \ (that is, $z$ and $z'$
can be concatenated by an $N$-chain in the fiber \ $\mathsf Z_H (a)
\cap \overline \theta ^{-1} (\overline \theta (z)$)\,).
\end{enumerate}
Then
\[
\mathsf c (H, \theta) = \sup \{\mathsf c (a, \theta) \mid a \in H \}
\in \N_0 \cup \{\infty\}\,
\]
denotes the {\it catenary degree in the fibres}.

\medskip
The following lemma gathers the properties of transfer homomorphisms
which are needed in the sequel.

\medskip
\begin{lemma} \label{4.1}
Let $\theta \colon H \to B$ be a transfer homomorphism of
atomic monoids.
\begin{enumerate}
\item For every $a \in H$, we have $\overline \theta(\mathsf Z_H(a)) = \mathsf Z_B(\theta(a))$
and \ $\mathsf L_H(a) = \mathsf L_B(\theta(a))$. In particular,
$\Delta (H) = \Delta (B)$ and $\rho_k (H) = \rho_k (B)$ for all $k
\ge 2$.

\smallskip
\item $\mathsf c (B) \le \mathsf c (H) \le \max \{ \mathsf c (B),
      \mathsf c (H, \theta) \}$, and $\mathsf c_{\monn} (B) \le \mathsf c_{\monn} (H) \le \max \{ \mathsf c_{\monn} (B),
      \mathsf c (H, \theta) \}$.

\smallskip
\item For every \ $a \in H$ \ and all \ $k, l \in \mathsf L (a)$, \ we have \
      $\dd \bigl( \mathsf Z_k (a), \mathsf Z_l (a) \bigr) =
      \dd \bigl( \mathsf Z_k \bigl( \theta(a)\bigr), \mathsf Z_l \bigl( \theta (a) \bigr)
      \bigr)$. In particular, $\mathsf c_{\adj} (a) = \mathsf
      c_{\adj} ( \theta (a) )$ and $\mathsf c_{\adj} (H) = \mathsf c_{\adj}
      (B)$.

\smallskip
\item $\mathsf c_{\eq} (B) \le \mathsf c_{\eq} (H) \le \max \{ \mathsf c_{\eq} (B), \mathsf c (H, \theta)
\}$.
\item If $\theta'\colon B\rightarrow B'$ is a second transfer homomorphism, then $\theta'\circ
\theta$ is a transfer homomorphism, such that
$\cat(H,\theta'\circ\theta)\leq\sup\{\cat(H,\theta),\cat(B,\theta')\}$.
\end{enumerate}
\end{lemma}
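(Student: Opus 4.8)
The plan is to reduce everything to a single fibre-wise lifting principle and then read off the five assertions. Since $\theta$ is a transfer homomorphism if and only if $\theta_{\red}$ is, and all invariants in question depend only on $H_{\red}$ and $B_{\red}$, I may assume throughout that $H$ and $B$ are reduced, so that $\overline\theta\colon \mathsf Z(H)\to\mathsf Z(B)$ is the length-preserving monoid homomorphism with $\overline\theta(u)=\theta(u)$ for $u\in\mathcal A(H)$. The first task is \emph{surjectivity of $\overline\theta$ on fibres}: for $a\in H$ I claim $\overline\theta(\mathsf Z_H(a))=\mathsf Z_B(\theta(a))$. Given a factorization $\theta(a)=b_1\cdot\ldots\cdot b_k$ with $b_i\in\mathcal A(B)$, I apply (T\,2) repeatedly to peel off one atom at a time, obtaining $a=v_1\cdot\ldots\cdot v_k$ with $\theta(v_i)=b_i$; each $v_i$ is an atom since, by (T\,1), a nontrivial factorization of $v_i$ would force one of $b_i$. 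As $\overline\theta$ preserves lengths this gives $\mathsf L_H(a)=\mathsf L_B(\theta(a))$, and since $B=\theta(H)$ the systems of sets of lengths coincide, whence $\Delta(H)=\Delta(B)$ and $\rho_k(H)=\rho_k(B)$, which is Part 1.

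The key technical step, on which Parts 2--5 rest, is a \emph{distance-preserving lifting}: for $a\in H$ and $\bar x,\bar y\in\mathsf Z_B(\theta(a))$ there are lifts $x,y\in\mathsf Z_H(a)$ with $\overline\theta(x)=\bar x$, $\overline\theta(y)=\bar y$ and $\dd(x,y)=\dd(\bar x,\bar y)$. To obtain this I write $\bar x=\bar g\bar x_1$, $\bar y=\bar g\bar y_1$ with $\bar g=\gcd(\bar x,\bar y)$, lift the factorization $a=\pi(\bar g)\pi(\bar x_1)=\pi(\bar g)\pi(\bar y_1)$ by the surjectivity above so that the common factor $\bar g$ is lifted to one $g$, and set $x=gx_1$, $y=gy_1$; since $g$ divides $\gcd(x,y)$ one gets $\dd(x,y)\le\max\{|x_1|,|y_1|\}=\dd(\bar x,\bar y)$, while the general inequality $\dd(\overline\theta(x),\overline\theta(y))\le\dd(x,y)$ gives the reverse. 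Because $\overline\theta$ maps $\mathsf Z_k(a)$ onto $\mathsf Z_k(\theta(a))$, combining these two inequalities over all pairs yields $\dd(\mathsf Z_k(a),\mathsf Z_l(a))=\dd(\mathsf Z_k(\theta(a)),\mathsf Z_l(\theta(a)))$, and specializing to adjacent $k,l$ gives Part 3.

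For the catenary inequalities (Parts 2 and 4) the lower bounds $\mathsf c(B)\le\mathsf c(H)$, $\mathsf c_{\monn}(B)\le\mathsf c_{\monn}(H)$ and $\mathsf c_{\eq}(B)\le\mathsf c_{\eq}(H)$ follow by lifting a given pair of factorizations of $\theta(a)$ distance-preservingly to $H$, concatenating there by a chain, and pushing it back down via $\overline\theta$ (the image of an $N$-chain is an $N$-chain, and monotonicity resp. equal lengths are preserved by Part 1). For the upper bounds, given $z,z'\in\mathsf Z_H(a)$ I first join $\overline\theta(z)$ and $\overline\theta(z')$ in $\mathsf Z_B(\theta(a))$ by a $\mathsf c(B)$-chain (resp.\ a monotone, resp.\ equal-length, chain governed by $\mathsf c_{\monn}(B)$ resp.\ $\mathsf c_{\eq}(B)$), lift this chain vertex by vertex into $\mathsf Z_H(a)$ with the distance-preserving lifting so that consecutive lifts differ by at most the prescribed bound, and bridge the two lifts selected at each intermediate vertex inside its $\overline\theta$-fibre by $\mathsf c(H,\theta)$-chains. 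Concatenating produces an $N$-chain with $N=\max\{\mathsf c(B),\mathsf c(H,\theta)\}$, together with the monotone and equal-length variants. I expect this stepwise chain-lifting---keeping consecutive lifts close while staying in the correct fibre---to be the main obstacle, and it is precisely where the distance-preserving lifting is used.

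Finally, for Part 5 I verify (T\,1) and (T\,2) for $\theta'\circ\theta$ by a direct chase, using $\overline{\theta'\circ\theta}=\overline{\theta'}\circ\overline\theta$. For the fibre bound, given $z,z'\in\mathsf Z_H(a)$ with $\overline{\theta'}(\overline\theta(z))=\overline{\theta'}(\overline\theta(z'))$, I first use $\mathsf c(B,\theta')$ to join $\overline\theta(z)$ and $\overline\theta(z')$ by a chain inside their common $\overline{\theta'}$-fibre in $\mathsf Z_B(\theta(a))$, lift this chain to $\mathsf Z_H(a)$ distance-preservingly (the lift stays in the $\overline{\theta'\circ\theta}$-fibre because $\overline{\theta'}$ is constant along the downstairs chain), and close the gap between the two lifts at each vertex within the corresponding $\overline\theta$-fibre by $\mathsf c(H,\theta)$-chains. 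This yields $\mathsf c(H,\theta'\circ\theta)\le\sup\{\mathsf c(H,\theta),\mathsf c(B,\theta')\}$.
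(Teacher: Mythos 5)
Your proposal is correct. Note, though, that the paper does not actually prove Lemma \ref{4.1}: all five parts are disposed of by citation (\cite[Proposition 3.2.3]{Ge-HK06a}, \cite[Lemma 3.2]{Ge-Gr-Sc-Sc10}, \cite[Lemma 3.2.6]{Ge-HK06a}), so what you have written is a self-contained reconstruction of the standard machinery behind those references rather than a genuinely different route. Your reconstruction hinges on exactly the right two facts: that $\overline\theta$ is length-preserving, maps atoms to atoms and hence does not increase distances, and that factorizations downstairs lift fibre-wise with controlled distance. One point deserves emphasis, and you handle it correctly: your lifting lemma produces a \emph{pair} of lifts for a \emph{pair} of factorizations, so when you lift a whole chain $\bar z_0,\ldots,\bar z_k$ you get two (possibly different) lifts of each intermediate vertex $\bar z_i$; these must be reconciled, and bridging them by a $\mathsf c(H,\theta)$-chain inside the fibre over $\bar z_i$ (which consists entirely of factorizations of length $|\bar z_i|$, so monotonicity and equal lengths survive) is precisely what makes the upper bounds in Parts 2, 4 and 5 come out as $\max\{\cdot,\mathsf c(H,\theta)\}$. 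The same observation shows the lifted chain stays in the $\overline{\theta'\circ\theta}$-fibre in Part 5. So the argument is sound; it simply writes out what the paper delegates to the literature.
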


\begin{proof}
1. This follows from \cite[Proposition 3.2.3]{Ge-HK06a}.

\smallskip
2. and 3. This follows from \cite[Lemma 3.2]{Ge-Gr-Sc-Sc10}.

\smallskip
4. Since $|z| = |\overline \theta (z)|$ for all $z \in \mathsf Z (H)$, this is a simple consequence of \cite[Lemma 3.2.6]{Ge-HK06a}.

\smallskip
5. See \cite[Lemma 3.2]{Ge-Gr-Sc-Sc10}.
\end{proof}

\medskip
Let $G$ be an additive abelian group, $G_0 \subset G$ a
subset, $T$ a monoid and $\iota \colon T \to G$ a homomorphism. Let
$\sigma \colon \mathcal F(G_0) \to G$ be the unique homomorphism
satisfying $\sigma (g) = g$ for all $g \in G_0$. Then
\[
B = \mathcal B (G_0,T,\iota) = \{S\,t \in \mathcal F(G_0) \time T\,\mid\, \sigma (S) + \iota(t) =
0\,\} \subset \mathcal F (G_0) \time T = F
\]
the \ \textit{$T$-block monoid over \ $G_0$ \ defined by \
$\iota$}\,, and
\[
\mathcal B (G_0) = \{ S \in \mathcal F (G_0) \mid \sigma (S) = 0 \} \subset \mathcal F (G_0)
\]
is the {\it monoid of zero-sum sequences} over $G_0$. Then $B \subset F$ is saturated, and if
$G_0 = G$ or if $G_0 \cup \iota (T)$ contains only torsion elements,
then $B \subset F$ is cofinal.
  If $T$ is reduced, then $\mathcal B (G_0) \subset B$ is a divisor-closed submonoid whence $\mathsf Z_B (A) = \mathsf Z_{\mathcal B (G_0)} (A)$ for all $A \in \mathcal B (G_0)$, and if $T = \{1\}$, then
$\mathcal B (G_0) = B$.

\medskip
\begin{lemma} \label{4.2}
Let $D = \mathcal F (P) \time T$ be a reduced atomic monoid, where $P \subset D$ a set of primes
and $T \subset D$ is a  submonoid, and let $H \subset D$ be an atomic saturated submonoid with
class group $G = \mathcal C (H, D)$, and  $ G_P = \{[p] \mid p \in P\} \subset G$ the set of classes containing primes.
Let $\iota \colon T \to G$ be defined by $\iota (t) = [t]$, \ $F= \mathcal F (G_P) \time T$, \ $B =
\mathcal B(G_P,T, \iota) \subset F$, and let $\widetilde{\boldsymbol \beta} \colon D \to F$ be the
unique homomorphism satisfying $\widetilde{\boldsymbol \beta} (p) = [p]$ for all $p \in P$ and
$\widetilde{\boldsymbol \beta} \t T = \text{\rm id}_T$.

\begin{enumerate}
\item The restriction $\boldsymbol \beta = \widetilde{\boldsymbol
\beta} \t H \colon H \to B$ \ is a transfer homomorphism satisfying
$\mathsf c (H, \boldsymbol \beta) \le 2$.

\item If $H \subset D$ is cofinal, then $B \subset F$ is cofinal, and there is an isomorphism $\overline \psi \colon \cls{B}{F} \to G$,
such that $\overline{\psi}(S\, t)=\sigma(S)+\iota(t)$ for all $S\, t\in \mathcal F(G_P) \time T$ by which we will identify these groups.
\end{enumerate}
\end{lemma}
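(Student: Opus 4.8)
The plan is to treat the two parts in turn, noting first that $D$ and hence $H$, $T$, $F$ and $B$ are all reduced, so every unit group is trivial and $\simeq$ means equality. For Part 1 I would verify the defining properties (T\,1) and (T\,2) directly and then bound the catenary degree in the fibres; this part is in essence the general construction of the transfer homomorphism onto a $T$-block monoid (\cite[Theorem 3.4.10]{Ge-HK06a}), but it is short enough to reprove. For Part 2 I would realize $\mathcal C(B,F)$ as $G$ through the homomorphism $S\,t \mapsto \sigma(S) + \iota(t)$.

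First I would check that $\boldsymbol\beta(H) \subseteq B$: writing $h = \prod_{p \in P} p^{\mathsf v_p(h)} t$ with $t \in T$ and $S_h = \prod_p [p]^{\mathsf v_p(h)}$, one has $\sigma(S_h) + \iota(t) = \sum_p \mathsf v_p(h)[p] + [t] = [h] = 0$ because $h \in H = \{a \in D \mid [a] = 0\}$, so $\widetilde{\boldsymbol\beta}(h) = S_h\,t \in B$. For (T\,1) it suffices, by reducedness, to prove surjectivity and triviality of the kernel. Given $S\,t \in B$ with $S = \prod_g g^{n_g}$, I pick for each $g \in G_P$ a prime $p_g$ with $[p_g] = g$ and set $d = \prod_g p_g^{n_g}\, t \in D$; then $\widetilde{\boldsymbol\beta}(d) = S\,t$ and $[d] = \sigma(S) + \iota(t) = 0$, so $d \in \mathsf q(H) \cap D = H$ by saturation, proving surjectivity; the kernel is visibly trivial. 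For (T\,2), from $\boldsymbol\beta(u) = bc$ with $b = S_b t_b$ and $c = S_c t_c$ I would split the prime content of $u$: choose $x_p \le \mathsf v_p(u)$ with $\sum_{[p]=g} x_p = \mathsf v_g(S_b)$ for every $g$ (possible since $\mathsf v_g(S_b) + \mathsf v_g(S_c) = \sum_{[p]=g}\mathsf v_p(u)$, the $g$-part of $\boldsymbol\beta(u)=bc$) and set $v = \prod_p p^{x_p} t_b$, $w = \prod_p p^{\mathsf v_p(u)-x_p} t_c$. Then $u = vw$, $\boldsymbol\beta(v) = b$, $\boldsymbol\beta(w) = c$, and $[v] = \sigma(S_b)+\iota(t_b) = 0$ forces $v \in H$, likewise $w$.

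For $\mathsf c(H,\boldsymbol\beta) \le 2$ I would argue as follows. Two factorizations $z, z' \in \mathsf Z_H(a)$ with $\overline{\boldsymbol\beta}(z) = \overline{\boldsymbol\beta}(z')$ lift the same family of atoms of $B$, and by Lemma \ref{4.1} the atoms of $H$ are precisely the $\boldsymbol\beta$-preimages of atoms of $B$; each such lifted atom has its $T$-part determined by its image, so $z$ and $z'$ differ only in how the primes lying over each fixed class $g$ are distributed among the atoms. For each $g$ this is encoded by a nonnegative integer matrix (atoms versus primes over $g$) whose row and column margins are fixed by $\overline{\boldsymbol\beta}(z)$ and by $a$; the two matrices for $z$ and $z'$ share these margins, so they are joined by the standard $2\times 2$-switches. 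A single switch replaces two atoms $u_i, u_j$ by $u_i p^{-1}p'$ and $u_j {p'}^{-1} p$ with $[p] = [p'] = g$, which keeps them atoms of $H$, preserves the product and the $\boldsymbol\beta$-image, and has distance $2$; this produces a $2$-chain in the fibre, so $\mathsf c(a,\boldsymbol\beta) \le 2$.

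Turning to Part 2, cofinality of $B \subset F$ follows from that of $H \subset D$: since $\widetilde{\boldsymbol\beta}$ is onto, I lift $f \in F$ to $d \in D$, choose $u \in H$ with $d \mid u$, and get $f \mid \boldsymbol\beta(u) \in B$. Let $\psi \colon \mathsf q(F) \to G$ be the homomorphism extending $g \mapsto g$ on $G_P$ and $\iota$ on $\mathsf q(T)$, so that $\psi(S\,t) = \sigma(S)+\iota(t)$ and $B = \{f \in F \mid \psi(f) = 0\}$. Then $\mathsf q(B) \subseteq \ker\psi$, and $\psi$ is onto because $G = \langle G_P \cup \iota(T)\rangle$ (every $[d]$ lies in this subgroup), so $\psi$ induces a surjection $\overline\psi \colon \mathcal C(B,F) = \mathsf q(F)/\mathsf q(B) \to G$ of the stated form. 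The hard part is injectivity, i.e.\ $\ker\psi \subseteq \mathsf q(B)$: given $f \in \mathsf q(F)$ with $\psi(f) = 0$ I write $f = f_1 f_2^{-1}$ with $f_1, f_2 \in F$ and $h := \psi(f_1) = \psi(f_2)$, use cofinality to pick $c \in F$ with $f_1 c \in B$ (so $\psi(c) = -h$), note that then $f_2 c \in B$ too, and conclude $f = (f_1 c)(f_2 c)^{-1} \in \mathsf q(B)$. This injectivity step, where cofinality supplies the correction element $c$ with $\psi(c) = -h$, together with the fibre estimate, is exactly where the hypotheses are used; the remainder is routine unwinding of saturation and of the definitions of the $T$-block monoid and the class group.
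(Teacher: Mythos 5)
Your proposal is correct. The paper does not prove Lemma \ref{4.2} itself but simply cites \cite[Proposition 3.4.8]{Ge-HK06a}, and your argument is a self-contained rendering of exactly that standard proof: verifying \textbf{(T\,1)} and \textbf{(T\,2)} via saturation (i.e.\ $H=\{a\in D\mid [a]=0\}$), bounding $\mathsf c(H,\boldsymbol\beta)$ by $2$ through the exchange of two primes from the same class between two atoms of a factorization (your ``$2\times2$-switch'' on the exponent matrices with fixed margins), and identifying $\cls{B}{F}$ with $G$ by the map $S\,t\mapsto \sigma(S)+\iota(t)$, with cofinality supplying the correction element needed for injectivity. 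The only cosmetic point is that the fact that atoms of $H$ are exactly the $\boldsymbol\beta$-preimages of atoms of $B$ is a standard consequence of \textbf{(T\,1)} and \textbf{(T\,2)} rather than a literal statement of Lemma \ref{4.1}, but this does not affect the argument.
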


\begin{proof}
See \cite[Proposition 3.4.8]{Ge-HK06a}.
\end{proof}

\medskip
\begin{lemma}\label{L:IndTransfer}
Let $\theta\colon D\rightarrow \overline{D}$ be a transfer homomorphism and $\overline{H}\subset \overline{D}$ a
saturated submonoid such that $\overline{D}^\times=\theta(D^\times)\overline{H}^\times$. If
$H=\theta^{-1}(\overline{H})$ and $\theta'\colon H\rightarrow \overline{H}$ is induced by $\theta$, then
$\theta'$ is a transfer homomorphism.
\end{lemma}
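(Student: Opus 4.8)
The plan is to verify the two defining axioms (T1) and (T2) directly for the restriction $\theta' = \theta \t H$, using the corresponding axioms for $\theta$ together with the splitting hypothesis $\overline D^\times = \theta(D^\times)\overline H^\times$. First I would dispose of the preliminaries. Since $H = \theta^{-1}(\overline H)$ and $\overline H \subset \overline D$, the map $\theta$ restricts to a homomorphism $\theta' \colon H \to \overline H$. As $\overline H \subset \overline D$ is saturated, one has $\overline H^\times = \overline H \cap \overline D^\times$: an element of $\overline H$ invertible in $\overline D$ has its inverse in $\mathsf q(\overline H) \cap \overline D = \overline H$. Atomicity of $\overline H$ is needed even to speak of a transfer homomorphism; once (T1) and (T2) are in hand, $H$ is atomic too, since a factorization of $\theta(a)$ into atoms of $\overline H$ lifts through (T2) to a factorization of $a$ into elements of $H$, and by (T1) a factor mapping up to associates to an atom of $\overline H$ cannot split into two non-units.

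For (T1) I must show $\overline H = \theta'(H)\overline H^\times$ and $\theta'^{-1}(\overline H^\times) = H^\times$. Given $b \in \overline H$, (T1) for $\theta$ writes $b = \theta(d)\eta$ with $d \in D$ and $\eta \in \overline D^\times$; splitting $\eta = \theta(\epsilon)\bar\eta$ with $\epsilon \in D^\times$ and $\bar\eta \in \overline H^\times$ gives $b\bar\eta^{-1} = \theta(d\epsilon) \in \overline H$, so $d\epsilon \in H$ and $b = \theta'(d\epsilon)\bar\eta \in \theta'(H)\overline H^\times$. For the unit statement: if $u \in H^\times$, then $\theta(u) \in \overline H \cap \overline D^\times = \overline H^\times$; conversely, if $u \in H$ with $\theta(u) \in \overline H^\times \subset \overline D^\times$, then $u \in \theta^{-1}(\overline D^\times) = D^\times$ by (T1) for $\theta$, and $\theta(u^{-1}) = \theta(u)^{-1} \in \overline H^\times \subset \overline H$ forces $u^{-1} \in H$, whence $u \in H^\times$.

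The substantive step is (T2). Let $u \in H$ and $\bar b, \bar c \in \overline H$ with $\theta(u) = \bar b\bar c$ as an equation in $\overline D$. Axiom (T2) for $\theta$ yields $v, w \in D$ with $u = vw$, $\theta(v) = \bar b\eta$ and $\theta(w) = \bar c\eta'$ for some $\eta, \eta' \in \overline D^\times$; cancelling in $\theta(u) = \theta(v)\theta(w)$ gives $\eta' = \eta^{-1}$. I then correct the factors inside $D$: writing $\eta = \theta(\epsilon)\bar\eta$ with $\epsilon \in D^\times$ and $\bar\eta \in \overline H^\times$, I set $v' = v\epsilon^{-1}$ and $w' = w\epsilon$, so that $u = v'w'$ while $\theta(v') = \bar b\bar\eta$ and $\theta(w') = \bar c\bar\eta^{-1}$. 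Both images lie in $\overline H$, so $v', w' \in \theta^{-1}(\overline H) = H$, and since $\bar\eta \in \overline H^\times$ they satisfy $\theta'(v') \simeq \bar b$ and $\theta'(w') \simeq \bar c$ in $\overline H$, as required.

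The only real obstacle is this last adjustment: the lift furnished by (T2) for $\theta$ need not land in $H$, nor be associated to $\bar b, \bar c$ within $\overline H$ (only within $\overline D$), and a priori the ambiguity unit $\eta$ cannot be absorbed by a factor of $u$ lying in $H$. The hypothesis $\overline D^\times = \theta(D^\times)\overline H^\times$ is exactly what resolves this: it splits $\eta$ into a part $\theta(\epsilon)$ that can be transported by multiplying the factors by the $D$-unit $\epsilon$ (leaving the product $u$ unchanged) and a harmless part $\bar\eta \in \overline H^\times$ that only changes $\bar b, \bar c$ by associates in $\overline H$. Everything else is a formal verification in cancellative monoids.
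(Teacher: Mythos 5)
Correct, and essentially the same argument as the paper's: lift via \textbf{(T\,2)} for $\theta$, then use the splitting $\overline D^\times=\theta(D^\times)\overline H^\times$ to transport the ambiguity unit into a $D^\times$-part (absorbed by rescaling the factors, leaving $u$ unchanged) and an $\overline H^\times$-part (which only changes $b,c$ by associates in $\overline H$). The only cosmetic difference is that you adjust both lifted factors symmetrically by $\epsilon^{\pm1}$ and check directly that both images land in $\overline H$, whereas the paper adjusts one factor and deduces that the other lies in $H$ from the saturation of $H\subset D$.
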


\begin{proof}
By assumption, $\overline D = \theta(D) \overline D^\times$, \ $\theta^{-1}(\overline D^\times ) = D^\times$, and $\overline D^\times = \theta(D^\times) \overline H^\times$, whence also $\overline D = \theta(D) \overline H^\times$. Since $\overline H \subset \overline D$ is saturated, it follows that $H \subset D$ is saturated, and $\mathsf q(\overline H) \cap \overline D^\times = \overline H^\times$. We have to verify the properties \textbf{(T\,1)} and \textbf{(T\,2)}.

If $x \in \overline H \subset \overline D = \theta(D) \overline H^\times$, then $x = \theta(a) \varepsilon$, where $a \in D$ and $\varepsilon \in \overline H^\times$. Since $\theta(a) = x \varepsilon^{-1} \in \overline H$, it follows that $a \in \theta^{-1}(\overline H) = H$ and $x \in \theta(H) \overline H^\times$. Hence $\overline H = \theta(H) \overline H^\times$.

If $a \in H$ and $\theta(a) \in \overline H^\times \subset \overline D^\times$, then $a \in H \cap \theta^{-1}(\overline D^\times) = H \cap D^\times = H^\times$. Hence it follows that $\theta^{-1}(\overline H^\times) = H^\times$, and thus \textbf{(T\,1)} holds.

In order to verify \textbf{(T\,2)}, let $u \in H$ and $b,\,c \in \overline H$ are such that $\theta(u) = bc$. Then there exist $v_1,\,w_1 \in \overline D$ and $\varepsilon_1,\, \eta_1 \in \overline D^\times$ such that $u = v_1w_1$, \ $\theta(v_1) = b \varepsilon_1$ and $\theta(w_1) = c \eta_1$. We set $\varepsilon_1 = \theta(e)\varepsilon$, where $e \in D^\times$ and $\varepsilon \in \overline H^\times$. Then $\theta(e^{-1}v_1) = b \varepsilon \in \overline H$, hence $v = e^{-1} v_1 \in H$, \ $\theta(v) = b\varepsilon$ and $u = v(ew_1)$. Since $H \subset D$ is saturated, it follows that $w=ew_1 \in H$, and $\theta(w) = \varepsilon_1 \varepsilon^{-1} \eta_1c$. Since $\eta = \varepsilon_1 \varepsilon^{-1} \eta_1 = c^{-1} \theta (w) \in \mathsf q(\overline H) \cap \overline D^\times = \overline H^\times$,  property \textbf{(T\,2)} holds.
\end{proof}

\medskip

\begin{lemma}\label{L:LocalTransfer}
Let $D$ be a seminormal finitely primary monoid of rank one. Then the inclusion $i\colon
D\hookrightarrow \wh{D}$ is a transfer homomorphism.
\end{lemma}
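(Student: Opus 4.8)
The plan is to make the structures of $D$ and $\widehat{D}$ completely explicit and then verify the two defining conditions \textbf{(T\,1)} and \textbf{(T\,2)} by hand. Since $D$ is finitely primary of rank one, I may write $D \subset F = F^\times \times [q]$ with $\widehat{D} = F$, where $q$ is the single prime. The decisive input is seminormality: by Lemma \ref{3.3}.1 it forces the exponent to equal $1$, so that
\[
D = qF \cup D^\times, \qquad \widehat{D} = F = F^\times \times [q] \,.
\]
In particular $\widehat{D}_{\red} \cong (\N_0,+)$, so $\widehat{D}$ is a discrete valuation monoid, hence factorial and atomic with $\mathcal{A}(\widehat{D}) = \{\varepsilon q \mid \varepsilon \in F^\times\}$; and $D$ is atomic with exactly the same set of atoms by Lemma \ref{3.5}.1. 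The single fact I would record at the outset, and which drives everything, is the containment $qF \subset D$ that seminormality provides.

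For \textbf{(T\,1)} I first check $\widehat{D} = i(D)\widehat{D}^\times$. An arbitrary $x = \varepsilon q^k \in F$ (with $\varepsilon \in F^\times$, $k \in \N_0$) is either a unit, in which case $x = 1 \cdot x \in D\,\widehat{D}^\times$, or it satisfies $x = q^k \cdot \varepsilon$ with $q^k \in qF \subset D$ and $\varepsilon \in \widehat{D}^\times$. For the second half, $i^{-1}(\widehat{D}^\times) = D \cap F^\times$; since elements of $qF$ have positive $q$-valuation while units of $F$ do not, the decomposition $D = qF \cup D^\times$ gives $D \cap F^\times = D^\times$ immediately (equivalently, this is the archimedean property of the primary monoid $D$). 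Hence \textbf{(T\,1)} holds.

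For \textbf{(T\,2)} I take $u \in D$ and $b, c \in \widehat{D}$ with $u = bc$, and write $b = \eta_1 q^m$, $c = \eta_2 q^n$ with $\eta_1, \eta_2 \in F^\times$ and $m, n \in \N_0$. If $m, n \ge 1$, then $b, c \in qF \subset D$, so I simply set $v = b$ and $w = c$. If $m = 0$, then $b \in \widehat{D}^\times$, and I set $v = 1 \in D^\times$ and $w = u \in D$; then $v \simeq b$, while $w = bc \simeq c$ and $u = vw$. The case $n = 0$ is symmetric. This verifies \textbf{(T\,2)}, and since both $D$ and $\widehat{D}$ are atomic, $i$ is a transfer homomorphism.

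The only genuine subtlety — and the place where the hypothesis is indispensable — is the containment $qF \subset D$. Without seminormality the exponent $\alpha$ may exceed $1$, in which case $D$ need not contain all associates of $q$: the atom sets of $D$ and $\widehat{D}$ would then differ and the lifting of factorizations in \textbf{(T\,2)} would break down. Thus the whole argument hinges on invoking Lemma \ref{3.3}.1 (equivalently Lemma \ref{3.5}) to pass to the normal form $D = qF \cup D^\times$, after which every step is routine.
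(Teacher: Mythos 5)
Your proof is correct and follows essentially the same route as the paper's: both reduce to the normal form $D = qF \cup D^\times$, $\wh D = F^\times \time [q]$ via Lemma \ref{3.3}.1, verify \textbf{(T\,1)} from $\wh D^\times D = \wh D$ and $D \cap \wh D^\times = D^\times$, and verify \textbf{(T\,2)} by the same case split on whether an exponent vanishes. Your added remarks on atomicity and on where seminormality is indispensable are accurate but not needed beyond what the paper records.
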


\begin{proof}
Since $D$ is seminormal, by Lemma \ref{3.3}.1, we have $\widehat D = F = F^{\times} \time [q]$ and $D=D'=qF \cup D^{\times}$. So  $\wh{D}^\times D = F^{\times}D=\wh D$. From $D^\times
=D\cap \wh{D}^\times$ we obtain \textbf{(T\,1)}.

In order to show \textbf{(T\,2)}, let $u\in D$ and $b$, $c \in \wh{D}$ be such
that $u=bc$. Set $b=\ve q^n$, $c=\delta q^m$, where $\ve$,
$\delta\in\wh{D}^\times$ and $n$, $m \in\N_0$. If $n=0$ or $m=0$, say $n=0$, then $u=1u$ and
so $1$ and $b$ are associated in $\wh{D}$, and so are $u$ and $c$ . Now suppose $n\geq 1$ and $m\geq 1$.
Then $b$, $c\in D$ and the assertion is clear.
\end{proof}

\medskip
\begin{proposition}\label{L:GlobalTransfer}
Let $n\in\N$, $D_0$ be an atomic monoid and $D_1$,\ldots , $D_n$ be seminormal finitely primary
monoids of rank one. Set $D=D_0\time D_1\time\ldots \times D_n$ and
$\overline{D}=D_0\times\wh{D}_1 \times\ldots\time\wh{D}_n$. Suppose that
$\overline{H}\subset\overline{D}$ is a saturated submonoid such that $\overline{D}^\times=\overline{H}^\times D^\times$
and set $H=D\cap\overline{H}$. Then the inclusion $i\colon H\hookrightarrow \overline{H}$ is a transfer
homomorphism with $\cat(H,i)\leq 2$.
\end{proposition}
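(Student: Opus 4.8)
The plan is to realize the inclusion $i\colon H\hookrightarrow\overline H$ as the transfer homomorphism \emph{induced} (via Lemma \ref{L:IndTransfer}) by a transfer homomorphism $D\to\overline D$ assembled from the local maps of Lemma \ref{L:LocalTransfer}, and then to bound its catenary degree in the fibres by hand. Since $D\subset\overline D$ with $D_j\subset\wh D_j$ in each coordinate, the inclusion $i\colon D\hookrightarrow\overline D$ is exactly $\mathrm{id}_{D_0}\time i_1\time\cdots\time i_n$, where $i_j\colon D_j\hookrightarrow\wh D_j$ is the inclusion. By Lemma \ref{L:LocalTransfer} each $i_j$ is a transfer homomorphism and $\mathrm{id}_{D_0}$ trivially is one; as properties \textbf{(T\,1)} and \textbf{(T\,2)} are checked coordinatewise, the product $i$ is a transfer homomorphism.

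For the fibres I would show $\cat(D,i)\le 2$. The key observation is that each $\wh D_j=F_j^\times\time[q_j]$ is a discrete valuation monoid, hence factorial, so $\mathsf Z_{\wh D_j}(i_j(a_j))$ is a single factorization for every $a_j\in D_j$; thus the whole set $\mathsf Z_{D_j}(a_j)$ lies in one fibre of $i_j$, and therefore $\cat(D_j,i_j)=\cat(D_j)\le 2$ by Lemma \ref{3.5}.2. Since fibres of a coproduct of transfer homomorphisms split coordinatewise, $\cat(D,i)=\sup\{\cat(D_0,\mathrm{id}_{D_0}),\cat(D_1,i_1),\ldots,\cat(D_n,i_n)\}\le 2$. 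Now by construction $i^{-1}(\overline H)=D\cap\overline H=H$, and the hypothesis $\overline D^\times=\overline H^\times D^\times=i(D^\times)\overline H^\times$ is precisely what Lemma \ref{L:IndTransfer} requires, so the induced inclusion $i\colon H\hookrightarrow\overline H$ is a transfer homomorphism. It remains to prove $\cat(H,i)\le 2$, and this is the step I expect to be the main obstacle, since Lemma \ref{L:IndTransfer} gives no control on the fibres and, unlike $i\colon D\to\overline D$, the saturated inclusion $H\subset D$ does not send atoms to atoms.

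For the fibre bound I would argue directly. Fix $a\in H$ and $z,z'\in\mathsf Z_H(a)$ with $\overline i(z)=\overline i(z')$ in $\mathsf Z(\overline H)$; as a transfer homomorphism carries atoms to atoms we have $|z|=|z'|$, and writing $z=u_1\cdots u_r$ and $z'=u_1'\cdots u_r'$ with $u_k,u_k'\in\mathcal A(H)$ we may, after reordering, assume $u_k'=u_k\eta_k$ with $\eta_k\in\overline H^\times$ and $\prod_{k}\eta_k=1$. The plan is to pass from $z$ to $z'$ by successive \emph{unit exchanges} $(u,v)\mapsto(u\delta,v\delta^{-1})$ with $\delta\in\overline H^\times$, each of which has distance at most $2$ and leaves $\overline i$ unchanged, hence stays inside the fibre. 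Writing $J(u)$ for the set of coordinates $j$ in which $u$ is a non-unit of $D_j$, one checks that $u\delta\in H$ (and then automatically $u\delta\in\mathcal A(H)$, its image being an atom of $\overline H$) exactly when $\delta_j\in D_j^\times$ for all $j\notin J(u)$, and symmetrically for $v\delta^{-1}$. Moreover, from $u_k'=u_k\eta_k\in D$ together with $D_j\cap\wh D_j^\times=D_j^\times$ — the seminormal rank-one structure of $D_j$ — one reads off that $\eta_{k,j}\in D_j^\times$ whenever $j\notin J(u_k)$, so the exchanges needed to realize the $\eta_k$ do not fail for trivial reasons.

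The delicate part, and the crux of the whole proposition, is to schedule these pairwise exchanges so that \emph{every} intermediate factorization remains in $H$ and in the fibre; the obstruction is that $\overline H^\times$ couples the coordinates, so that a single $\delta$ cannot in general be chosen supported in one $D_j$. Here I would exploit that each $D_j$ is half-factorial with $\cat(D_j)\le 2$ (Lemma \ref{3.5}.2) to run, coordinate by coordinate, a chain argument of the type used in the proof of Lemma \ref{3.5}.2, reaching from both $z$ and $z'$ a common normal form through $2$-chains. This yields $\cat(H,i)\le 2$ and completes the proof.
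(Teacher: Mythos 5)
Your first half matches the paper exactly: the paper also obtains the transfer homomorphism by feeding Lemma \ref{L:LocalTransfer} into Lemma \ref{L:IndTransfer} (though it first reduces to $n=1$ through the chain $\overline H_0\subset\overline H_1\subset\dots\subset\overline H_n$ and Lemma \ref{4.1}.5, completing one coordinate at a time, rather than treating all coordinates simultaneously). Your observation that $\mathsf c(D,i)\le 2$ for $i\colon D\to\overline D$ is correct but, as you yourself note, irrelevant: Lemma \ref{L:IndTransfer} transfers no fibre information to $H\hookrightarrow\overline H$, so that paragraph does no work.

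The genuine gap is exactly where you flag it: you identify the right elementary move (a unit exchange $(u,v)\mapsto(u\delta,v\delta^{-1})$ with $\delta\in\overline H^\times$, of distance $\le 2$, staying in the fibre, legal on any coordinate where both atoms have positive $q_j$-exponent by seminormality) and the right obstruction (the coupling of coordinates through $\overline H^\times$), but you do not produce the schedule, and the fallback you offer -- running ``a chain argument of the type used in Lemma \ref{3.5}.2 \dots reaching a common normal form'' -- is not a proof: Lemma \ref{3.5}.2 concatenates factorizations of a fixed element of a single rank-one monoid and says nothing about choosing a normal form that is canonical under the $\overline H^\times$-action across several coordinates, which is precisely the scheduling problem. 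The paper resolves this by two devices you are missing. First, the reduction to $n=1$ is not cosmetic: after it, $\overline H^\times\subset\wh D_1^\times$ lives in a single coordinate, so the coupling disappears at each stage of the chain. Second, with $n=1$ the paper runs an induction on $|z|$ with a clean dichotomy: writing $u_j'=\varepsilon_j^{-1}u_j$ with $\varepsilon_j\in\overline H^\times\subset\wh D_1^\times$, if some atom has $q$-exponent $e_j=0$ then $\varepsilon_j=1$ (that atom is rigid), so it is a common factor of $z$ and $z'$ and can be cancelled; otherwise all exponents are positive, and a \emph{single} unit exchange between $u_{m-1}'$ and $u_m'$ (moving $\varepsilon_m$ onto $u_{m-1}'$, which stays an atom of $H$ by seminormality and atom-reflection along the transfer homomorphism) produces $z''$ with $\dd(z'',z')\le 2$ sharing the atom $u_m$ with $z$, after which one cancels $u_m$ and applies the induction hypothesis. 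That induction is the content of the fibre bound; without it (or an equivalent scheduling argument) your proof is incomplete at its acknowledged crux.
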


\begin{proof}
We reduce  to the case $n=1$ as follows. For $k \in [0, n]$,  set $\overline{D}_k=D_0\times
\wh{D}_1\times\ldots \wh{D}_k\times D_{k+1}\time\ldots\times D_n$ and
$\overline{H}_k=\overline{H}\cap \overline{D}_k$, and note that $\overline{H}_k \subset  \overline{D}_k$ is saturated. For $k \in [0, n-1]$, let $i_k\colon
\overline{H}_{k}\hookrightarrow\overline{H}_{k+1}$ be the inclusion. Then $H=\overline{H}_0$, $\overline{H}=\overline{H}_n$, and
$i=i_{n-1}\circ\ldots \circ i_0$. By Lemma \ref{4.1}.5, it suffices to show that
 $i_k$ is a transfer homomorphism with $\cat(\overline{H}_k,i_k)\leq 2$ for each $k \in [0, n-1]$.

We fix $k \in [0, n-1]$. First we show
$\overline{D}_{k+1}^\times=\overline{D}_k^\times\overline{H}_{k+1}^\times$.  Since $D^\times\subset
\overline{D}_{k+1}^\times$ we have by assumption
\begin{gather*}
\overline{D}_{k+1}^\times=\overline{D}_{k+1}^\times\cap\overline{D}^\times=\overline{D}_{k+1}^\times\cap\left(\overline{H}^\times
D^\times\right)=\left(\overline{D}_{k+1}^\times\cap
\overline{H}^\times\right)D^\times=\\
=\left(\overline{D}_{k+1}\cap\overline{H}\right)^\times D^\times=\overline{H}_{k+1}^\times D^\times\subset
\overline{H}_{k+1}^\times \overline{D}_k^\times\quad ,
\end{gather*}
and the converse inclusion is trivial.

If $D_k' = D_0 \time \wh D_1 \time \ldots \time \wh D_k \time D_{k+2} \time \ldots \time D_n$, then $\overline D_k = D_k' \time D_{k+1}$, \ $\overline D_{k+1} = D_k' \time \wh D_{k+1}$, and if we apply the assertion for $n = 1$ to $(D_k', D_{k+1})$ instead of $(D_0,D_1)$, it follows that $i_k \colon \overline H_k \hookrightarrow \overline H_{k+1}$ is a transfer homomorphism satisfying $\mathsf c(\overline H_k, i_k) \le 2$.

We proceed with the case $n=1$, and we may assume that $D = D_0 \time D_1$ is reduced. Then $D_0, \,D_1$ and $H$ are also reduced, and we set $\wh D_1 = \wh D_1^\times \time [q]$. By Lemma \ref{L:LocalTransfer}, the inclusion   $D_1 \hookrightarrow \wh D_1$ and thus also the inclusion $D_0 \time D_1 \hookrightarrow D_0 \time \wh D_1$ is a transfer homomorphism. By Lemma \ref{L:IndTransfer} also $i \colon H \hookrightarrow \overline H$ is a transfer homomorphism, and it remains to prove that $\mathsf c(H,i) \le 2$. For this, we fix some $a \in H$, \ $w \in \mathsf Z_{\overline H}(a)$ and $z,\,z' \in \mathsf Z_H(a)$ such that $\overline i(z) = \overline i(z') = w$. Then $|z| = |z'| = |w| = m \in \N_0$, and we must show that there exist factorizations $z=z_0, z_1, \ldots, z_r = z'$ for some $r \in \N_0$ such that $z_i \in \mathsf Z_H(a)$, \ $\overline i(z_i) = w$ and $\mathsf d(z_{i-1}, z_i) \le 2$ for all $ i \in [1,r]$. We use induction on $m$. If $m=0$, then $z=z'=1$, and there is nothing to do. Thus suppose that $m>0$, \ $z = u_1 \cdot \ldots \cdot u_m$ and $z' = u_1' \cdot \ldots \cdot u_m'$, where $u_j,\,u_j' \in \mathcal A(H) \subset D_0 \time D_1 \subset D_0 \time \wh D_1$, say $u_j = d_j\delta_j q^{e_j}$ and $u_j' = d_j'\delta_j' q^{e_j'}$ with $d_j,\,d_j' \in D_0$, \ $\delta_j,\,\delta_j' \in \wh D_1^\times$ and $\delta_j q^{e_j},\, \delta_j' q^{e_j'} \in D_1$ for all $j \in [1,m]$. After renumbering if necessary, we may assume that $u_j = \varepsilon_ju_j'$, where $\varepsilon_j \in \overline H^\times \subset (D_0 \time \wh D_1)^\times = \wh D_1^\times$, and therefore $d_j\delta_j q^{e_j} = d_j'(\varepsilon_j\delta_j')q^{e_j'}$, hence $d_j = d_j'$, \ $\delta_j = \varepsilon_j\delta_j'$ and $e_j= e_j'$ for all $j \in [1,m]$.

Suppose first that $e_j=0$ for some $j \in [1,m]$, say for $j=m$. Then $\delta_m,\,\delta_m' \in D_1^\times = \{1\}$, hence $\varepsilon_m=1$ and $u_m=u_m'$.
Now set $\overline{z}=u_1 \cdot \ldots \cdot u_{m-1}$, $\overline{z}'=u'_1 \cdot \ldots \cdot u'_{m-1}$ and $\overline{a}=au_m^{-1}$. Then
$\overline{z}$, $\overline{z}'\in \ZZ_H(\overline{a})$, $\overline{i}(\overline{z})=\overline{i}(\overline{z}')=:\overline{w}$ and
$|\overline{w}|<|w|$. By the induction hypothesis $\overline{z}$ and $\overline{z}'$ can be concatenated by a
$2$-chain $\overline{z}_0$,\ldots ,$\overline{z}_r$ such that $\overline{z}_l\in\ZZ_H(\overline{a})$ and
$\overline{i}(\overline{z}_l)=\overline{w}$ for $l \in [0, r]$. Then
\[
\overline{z}_0u_m,\ldots ,\overline{z}_ru_m
\]
is a $2$-chain concatenating $z$ and $z'$, such that $\overline{z}_lu_m\in \ZZ_H(a)$ and
$\overline{i}(\overline{z}_lu_m)=w$ for $l \in [0, m]$.

We may therefore assume $e_k>0$ for all $k \in [1, m]$. We set
$\overline{u}'_{m-1}=\ve_m^{-1}u'_{m-1}$ and we claim $\overline{u}'_{m-1}\in\mathcal{A}(H)$. Since
$\ve_m\in\overline{H}^\times$ we have $\overline{u}'_{m-1}\in\overline{H}$. Since $D_0$ is reduced we have
$\ve_m\in \wh{D}_1^\times$ . Then, since $D_1$ is seminormal and $e_{m-1}>0$, we obtain
\[
\overline{u}'_{m-1}=d_{m-1}\ve_m^{-1}\delta'_{m-1}q^{e_{m-1}}\in D\quad .
\]
Hence $\overline{u}'_{m-1}\in\overline{H}\cap D=H$. By construction $\overline{u}'_{m-1}$ and $u'_{m-1}$ are
associated in $\overline{H}$. Since $u'_{m-1}\in\mathcal{A}(H)$ and the inclusion
$H\hookrightarrow\overline{H}$ is a transfer homomorphism, we obtain $\overline{u}'_{m-1}\in\mathcal{A}(H)$.

By construction we have $\overline{u}'_{m-1}u_m=u'_{m-1}u'_m$. Hence $z''=u'_1 \cdot \ldots \cdot
u'_{m-2}\overline{u}'_{m-1}u_m\in \ZZ_H(a)$,  $\dd(z'',z')\leq 2$ and $\overline{i}(z'')=w$. Hence it is
sufficient to concatenate $z$ and $z''$ by a $2$-chain $z_0,\ldots , z_r$ such that $z_l\in\ZZ_H(a)$
and $\overline{i}(z_l)=w$, for $l \in [0, r]$. Since $u_m\mid z$ and $u_m\mid z''$, this follows from
the induction hypothesis.
\end{proof}

\bigskip
\section{Seminormal weakly Krull monoids and domains} \label{5}
\bigskip

Weakly Krull domains were introduced by Anderson, Anderson, Mott, and Zafrullah (\cite{An-An-Za92b, An-Mo-Za92}), and a divisor theoretic characterization was first given by Halter-Koch \cite{HK95a}. Examples will be discussed in \ref{5.7}. The main results of this section (and indeed of the present paper) are Theorem \ref{5.5} and Theorem \ref{5.6}, and they deal with seminormal $v$-noetherian weakly Krull monoids having a nontrivial conductor. Clearly, every Krull monoid is  seminormal $v$-noetherian weakly Krull, and for Krull monoids the results are well-known.

We gather the algebraic properties of weakly Krull monoids which are needed for our arithmetical investigations. Our main reference is \cite[Chapters 21 -- 24]{HK98}.
Let $\varphi \colon H \to D$ be a monoid homomorphism. We set $H_{\varphi} = \{ a^{-1}b \in \mathsf q (H) \mid a, b \in H, \varphi (a) \mid_D \varphi (b)\}$, and we say that $\varphi$ is a {\it divisor homomorphism} if, for all $a, b \in H$, $\varphi (a) \mid_D \varphi (b)$ implies that $a \mid_H b$ (equivalently, $H_{\varphi} = H$).
A family of monoid homomorphisms
$\boldsymbol \varphi = (\varphi_p \colon H \to D_p)_{p \in P}$ is said to be
\begin{itemize}
\item {\it of finite character} if the set  $\{p \in P \mid \varphi_p (a) \notin D_p^{\times}\}$ is finite for all $a \in H$.

\item a {\it  defining family} (for $H$) if it is of finite character and
      \[
      H = \bigcap_{p \in P} H_{\varphi_p} \,.
      \]
\end{itemize}
If $\boldsymbol \varphi$ is of finite character, then it induces a monoid homomorphism
\[
 \varphi \colon H \ \to \ D = \coprod_{p \in P}(D_p)_{\red} \qquad \text{defined by} \qquad \varphi(a) =
  (\varphi_p(a)D_p^\times)_{p \in P}\,,
\]
and $\boldsymbol \varphi$ is a defining family if and only if $\varphi$ is a divisor homomorphism.
A monoid $H$ is called a {\it weakly Krull monoid} (\cite[Corollary 22.5]{HK98}) if the family of embeddings $(\varphi_{\mathfrak p} \colon H \hookrightarrow H_{\mathfrak p})_{\mathfrak p \in \mathfrak X (H)}$ is a defining family of $H$, in other words
\[
H = \bigcap_{{\mathfrak p} \in \mathfrak X (H)} H_{\mathfrak p}  \quad \text{and} \quad \{{\mathfrak p} \in \mathfrak X (H) \mid a \in {\mathfrak p}\} \quad \text{is finite for all} \ a \in H \,.
\]
The class group
\[
\mathcal C \Big( \varphi(H),  \coprod_{{\mathfrak p} \in \mathfrak X (H)} (H_{\mathfrak p})_{\red} \Big) = \coker \Bigl( \mathsf q(\varphi) \colon \mathsf q(H) \to \coprod_{\mathfrak p \in \mathfrak X(H)} \mathsf q(H)/H_\mathfrak p^\times \Bigr)
\]
of $\varphi(H)$ in  $\coprod_{{\mathfrak p} \in \mathfrak X (H)} (H_{\mathfrak p})_{\red}$ depends only on $H$. It is called the {\it weak divisor class group} of $H$ and will be denoted by $\mathcal C (H)$.
A monoid $H$ is called
\begin{itemize}
\item {\it weakly factorial} if  every nonunit is a finite product of primary elements. Obviously, every primary monoid is weakly factorial, and every coproduct of a family of weakly factorial monoids is weakly factorial (see also Examples \ref{5.7}.3.)

\item {\it Krull} if it is weakly Krull and  $H_{\mathfrak p}$ is a discrete valuation monoid for each $\mathfrak p \in \mathfrak X (H)$. Since a monoid is Krull if and only if if it is $v$-noetherian and     completely integrally closed, we obtain that  every Krull monoid is a seminormal $v$-noetherian weakly Krull monoid.
\end{itemize}
A domain $R$ is called a {\it $($weakly$)$ Krull domain} ({\it weakly factorial}, resp.) if $R^{\bullet}$ is  (weakly) Krull (weakly factorial, resp.). Note that $H$ is (weakly) Krull if and only if $H_{\red}$ is (weakly) Krull. Suppose that $H$ is $v$-noetherian. Then $H$
is weakly Krull if and only if $v$-$\max (H)
= \mathfrak X (H)$ (\cite[Theorem 24.5]{HK98}), and $H$ is weakly factorial if and only if it is weakly Krull and $\mathcal C_v (H) = 0$.
If   $\emptyset\not=X\subset \mathsf q (H)$ is a fractional subset  and $S \subset H$ is a submonoid, then, by \cite[Theorem 2.3.5 and Proposition 2.2.8]{Ge-HK06a},
\[
\wh{S^{-1} H} = S^{-1} \wh H, \quad S^{-1}(H \DP X)=(S^{-1}H \DP S^{-1}X) \quad \text{and} \quad  S^{-1}(X_v)=(S^{-1}H \DP (S^{-1}H \DP S^{-1}X)) \,.
\]

\medskip
\begin{lemma} \label{5.1}
Let $H$ be a  weakly Krull monoid.
\begin{enumerate}
\item $H$ is $v$-noetherian if and only if $H_{\mathfrak p}$ is $v$-noetherian for each ${\mathfrak p} \in \mathfrak X (H)$.

\smallskip
\item $H$ is seminormal if and only if $H_{\mathfrak p}$ is seminormal for each ${\mathfrak p} \in \mathfrak X (H)$.

\smallskip
\item Suppose that  $H$ is $v$-noetherian. For $\mathfrak p \in \mathfrak X(H)$, let $S(\mathfrak p)$ be the set of all $\mathfrak P \in \mathfrak X(\wh H)$ lying above $\mathfrak p$, and set $|S(\mathfrak p)| = s_\mathfrak p$. Then
      \[\qquad
      S(\mathfrak p) = \{ \mathfrak P \in \mathfrak X(\wh H) \mid \mathfrak P \cap H = \mathfrak p\} = \{ \mathfrak q \cap \wh H \mid \mathfrak q \in \mathfrak X(\wh{H_\mathfrak p})\} \,.
      \]
      In particular, the map \,$\mathfrak X(\wh H) \to \mathfrak X(H)$, \ $\mathfrak P \mapsto \mathfrak P \cap H$, is surjective, and it is bijective if and only if $s_\mathfrak p = 1$ for all $\mathfrak p \in \mathfrak X(H)$. If $\mathfrak p \in \mathfrak X(H)$ and $H_{\mathfrak p}$ is finitely primary, then it has rank   $s_{\mathfrak p}$.

\smallskip
\item If $S\subset H$ is a saturated submonoid such that $\mathcal C(S, H)$ is a torsion group, then $S$ is  weakly Krull.
\end{enumerate}
\end{lemma}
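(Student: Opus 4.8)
The plan is to prove all four parts from the defining representation $H=\bigcap_{\mathfrak p\in\mathfrak X(H)}H_{\mathfrak p}$ of finite character, the last part being the only substantive one. For (1) and (2) the point is that $v$-noetherianity and seminormality are local conditions compatible with this intersection: by Lemma \ref{3.1} seminormalization commutes with localization, and (together with finite character) with the intersection, so $H'=\bigcap_{\mathfrak p}(H_{\mathfrak p})'$ and $H=H'$ holds if and only if it holds at each $\mathfrak p$; for $v$-noetherianity I would invoke the corresponding local-global principle for $v$-ideals as in \cite{Re13a} and \cite[Chapters 21--24]{HK98}. For (3), complete integral closure commutes with the finite-character intersection as well, giving $\wh H=\bigcap_{\mathfrak p}\wh{H_{\mathfrak p}}$; since each localization has only finitely many minimal primes, the $\mathfrak P\in\mathfrak X(\wh H)$ lying over $\mathfrak p$ are precisely the contractions $\mathfrak q\cap\wh H$ with $\mathfrak q\in\mathfrak X(\wh{H_\mathfrak p})$, from which surjectivity and the bijectivity criterion follow.

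For part (4) I would verify directly that the family $(S\hookrightarrow S_{\mathfrak q})_{\mathfrak q\in\mathfrak X(S)}$ is a defining family of $S$; as localizations at minimal primes are automatically primary, it suffices to show $S=\bigcap_{\mathfrak q\in\mathfrak X(S)}S_{\mathfrak q}$ and finite character. First I reduce: since a monoid is weakly Krull iff its reduced monoid is, and by Lemma \ref{4.0} the inclusion $S\hookrightarrow H$ induces a saturated inclusion $S^*\subset H_{\red}$ with $\mathcal C(S^*,H_{\red})\cong\mathcal C(S,H)$, I may assume $H$ (hence $S$) reduced. Then $\mathcal C(S,H)=\mathsf q(H)/\mathsf q(S)$, and its being torsion gives the \emph{root condition}: for every $a\in H$ there is $n\in\N$ with $a^n\in\mathsf q(S)\cap H=S$ (using saturation). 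Now fix $\mathfrak p\in\mathfrak X(H)$, put $\mathfrak q=\mathfrak p\cap S$ and $T=S\setminus\mathfrak q=S\cap(H\setminus\mathfrak p)$. A short computation with the root condition shows $S_{\mathfrak q}=T^{-1}S=\mathsf q(S)\cap H_{\mathfrak p}$: if $x=h/t\in\mathsf q(S)\cap H_{\mathfrak p}$ with $h\in H$, $t\in H\setminus\mathfrak p$, then replacing $t$ by a power lying in $S$ one writes $x=s/u$ with $s\in\mathsf q(S)\cap H=S$ and $u\in S\setminus\mathfrak q$. Thus $S_{\mathfrak q}$ is a saturated, proper submonoid of the primary monoid $H_{\mathfrak p}$, hence primary, so $\mathfrak q=\mathfrak p\cap S\in\mathfrak X(S)$ and $S_{\mathfrak p\cap S}\subset H_{\mathfrak p}$; this defines a map $\Phi\colon\mathfrak X(H)\to\mathfrak X(S)$.

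The representation is then immediate: if $x$ lies in every $S_{\mathfrak q}$, then in particular $x\in S_{\mathfrak p\cap S}\subset H_{\mathfrak p}$ for all $\mathfrak p\in\mathfrak X(H)$, so $x\in\bigcap_{\mathfrak p}H_{\mathfrak p}=H$, and since $x\in\mathsf q(S)$ saturation gives $x\in S$. The main obstacle is finite character, which I reduce to surjectivity of $\Phi$: granting it, for $a\in S$ one has $\{\mathfrak q\in\mathfrak X(S):a\in\mathfrak q\}=\{\mathfrak p\cap S:\mathfrak p\in\mathfrak X(H),\,a\in\mathfrak p\}$, a finite set because $H$ has finite character.

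Thus the crux is surjectivity of $\Phi$. Let $\mathfrak q\in\mathfrak X(S)$ and suppose no $\mathfrak p\in\mathfrak X(H)$ has $\mathfrak p\cap S=\mathfrak q$; set $T=S\setminus\mathfrak q$. For each $\mathfrak p\in\mathfrak X(H)$ the root condition forces $\mathfrak p\cap S\neq\emptyset$, so $\mathfrak p\cap S\in\mathfrak X(S)$ is distinct from $\mathfrak q$; as distinct minimal primes are incomparable, $\mathfrak p\cap S\not\subset\mathfrak q$, whence $\mathfrak p\cap T\neq\emptyset$. By the standard description of localizations of a finite-character defining family (\cite[Chapters 21--24]{HK98}), $T^{-1}H=\bigcap_{\mathfrak p\cap T=\emptyset}H_{\mathfrak p}=\mathsf q(H)$, the index set being empty. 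But then, exactly as in the saturation computation above, $S_{\mathfrak q}=\mathsf q(S)\cap T^{-1}H=\mathsf q(S)\cap\mathsf q(H)=\mathsf q(S)$ is a group, contradicting that $S_{\mathfrak q}$ is primary. Hence $\Phi$ is surjective, finite character follows, and $S$ is weakly Krull. I expect this last localization-collapses-to-a-group step to be the delicate point, since it is where the torsion hypothesis (through the root condition and cofinality) is genuinely used.
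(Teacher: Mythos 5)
Your proposal is correct, but for the substantive part 4 it takes a genuinely different route from the paper, and for parts 1--3 it is thinner than the paper's argument. For 1 and 2 the paper does not push the conditions through the intersection $H=\bigcap_{\mathfrak p}H_{\mathfrak p}$ in the local-to-global direction; instead it uses the divisor homomorphism $H\to\coprod_{\mathfrak p\in\mathfrak X(H)}H_{\mathfrak p}$ to realize $H_{\red}$ as a saturated submonoid of the coproduct and quotes that coproducts and saturated submonoids inherit both seminormality and $v$-noetherianity --- this is the precise form of the ``local-global principle'' you invoke for $v$-noetherianity, which as you state it is essentially the assertion to be proved (for seminormality your direct intersection argument does work). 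Part 3 is the same in both treatments, resting on $\wh{H_{\mathfrak p}}=S^{-1}\wh H$; note only that your parenthetical ``each localization has only finitely many minimal primes'' is neither assumed nor needed there. For part 4 the paper turns the torsion hypothesis into the statement that $S\subset H$ is a root extension, quotes \cite[Proposition 2.7.2]{Re13a} to get $\mathfrak X(S)=\{\mathfrak p\cap S\mid\mathfrak p\in\mathfrak X(H)\}$, and then checks the characterization of weakly Krull monoids by primary decompositions of principal ideals \cite[Theorem 22.7]{HK98}, writing $xS=xH\cap S=\bigcap_i(\mathfrak q_i\cap S)$ with each $\mathfrak q_i\cap S$ primary with radical in $\mathfrak X(S)$. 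You instead verify the defining-family definition directly: your identification $S_{\mathfrak p\cap S}=\mathsf q(S)\cap H_{\mathfrak p}$ and the surjectivity argument (collapsing $T^{-1}H$ to $\mathsf q(H)$ when $T$ meets every minimal prime) are sound --- the latter fact, in the empty-index case you actually use, can be checked from the primariness of each $H_{\mathfrak p}$, since any non-unit of $H_{\mathfrak p}$ divides a power of any element of $T\cap\mathfrak p$. Your route is more self-contained and makes visible exactly where the torsion hypothesis enters; the paper's is shorter given the cited machinery and obtains the description of $\mathfrak X(S)$ as a by-product rather than as the crux.
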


\begin{proof}
1. and 2. If $H$ is seminormal \,[$v$-noetherian], then every localization has the same property (see Lemma \ref{3.1} and \cite[Proposition 2.2.8]{Ge-HK06a}). Conversely, if $H_\mathfrak p$ is seminormal \,[$v$-noetherian] for all $\mathfrak p \in \mathfrak X(H)$, the same is true their coproduct
\[
P = \coprod_{\mathfrak p \in \mathfrak X(H)} H_{\mathfrak p}
\]
and for every saturated submonoid of $P$ (see Lemma \ref{3.1} and \cite[Propositions 2.1.11 and 2.4.4]{Ge-HK06a}).
Since there is a divisor homomorphism \,$\varphi \colon H \to P$, it follows that $H_\red$ is isomorphic to a saturated submonoid of $P$. Hence $H_\red$ and thus also $H$ is seminormal \,[$v$-noetherian].

\smallskip

3. Let $\mathfrak p \in \mathfrak X(H) = v\text{-}\max(H)$ and $S = H \setminus \mathfrak p$. Since $S^{-1} \wh H = \wh{S^{-1}H} = \wh{H_\mathfrak p}$ and $\mathfrak P \cap H \in \mathfrak X(H)$ for all $\mathfrak P \in \mathfrak X(\wh H)$, it follows that
\[
S(\mathfrak p) = \{ \mathfrak P \in \mathfrak X(\wh H) \mid \mathfrak P \cap H = \mathfrak p\} = \{ \mathfrak P \in \mathfrak X(\wh H) \mid \mathfrak P \cap H \supset \mathfrak p\} = \{ \mathfrak q \cap \wh H \mid \mathfrak q \in \mathfrak X(\wh{H_\mathfrak p}) \}
\]
is the set of all minimal non-empty prime $s$-ideals of $\wh H$ lying above $\mathfrak p$, and since the map
\[
\mathfrak X(\wh{H_\mathfrak p}) \to S(\mathfrak p) , \quad \text{defined by} \quad \mathfrak q \to \mathfrak q \cap \wh H
\]
is bijective, it follows that $|S(\mathfrak p)| = |\mathfrak X(\wh{H_\mathfrak p})|$.

In particular, the map $\pi \colon \mathfrak X(\wh H) \to \mathfrak X(H)$, defined by $\pi(\mathfrak P) = \mathfrak P \cap H$, is surjective, and $|\pi^{-1}(\mathfrak p)| = s_\mathfrak p$ for all $\mathfrak p \in \mathfrak X(H)$. If $H_{\mathfrak p}$ is finitely primary, then it has rank $|\mathfrak X(\wh{H_\mathfrak p})| = s_\mathfrak p$.

\smallskip
4. First suppose that $H$ is reduced. Let $y\in H$. Some $k\in\N$ exists such that $(y \mathsf q (S))^k= \mathsf q (S)$, hence $y^k\in  \mathsf q (S)\cap H=S$. Therefore, $S\subset H$ is a root extension, and thus \cite[Proposition 2.7.2]{Re13a} implies that $\{\mathfrak p\cap S\mid \mathfrak p\in\mathfrak{X}(H)\}=\mathfrak{X}(S)$. Let $x\in S\setminus S^{\times}$. Then $x\in H\setminus H^{\times}$ and by \cite[Theorem 22.7]{HK98} there are some $n\in\N$ and some sequence $(\mathfrak q_i)_{i=1}^n$ of primary ideals of $H$ such that $xH=\bigcap_{i=1}^n \mathfrak q_i$ and $\sqrt[H]{\mathfrak q_j}\in\mathfrak{X}(H)$ for all $j\in [1,n]$. Since $S\subset H$ is saturated, this implies that $xS=xH\cap S=\bigcap_{i=1}^n (\mathfrak q_i\cap S)$. Observe that $\mathfrak q_i\cap S$ is a primary ideal of $S$ and $\sqrt[S]{\mathfrak q_i\cap S}=\sqrt[H]{\mathfrak q_i}\cap S\in\mathfrak{X}(S)$ for all $i\in [1,n]$. By \cite[Theorem 22.7]{HK98} we obtain that $S$ is a weakly Krull monoid.

Now  we consider the general case. Clearly, $H_{\red}$ is weakly Krull and hence $S_{\red}$ is weakly Krull by Lemma \ref{4.0} and by the above arguments. Therefore $S$ is weakly Krull.
\end{proof}

\medskip
Saturated submonoids of  monoids $D = F \times (D_1 \time \ldots \time D_n)$, where $F$ is factorial and $D_1, \ldots, D_n$ are primary,  play an important role as  auxiliary monoids in factorization theory (see, for example, \cite[Section 4.5]{Ge-HK06a}. Note that  $T$-block monoids, as  introduced in Section \ref{4}, have such a form in case where $T=D_1 \time \ldots \time D_n$). We will need such monoids in the proofs of Theorems \ref{5.6} and \ref{6.2}. It has not been observed so far that they are weakly Krull monoids. The following lemma provides the simple argument.

\medskip
\begin{lemma} \label{5.2}
Let $D = \mathcal F (P) \time \prod_{i=1}^n D_i$ be a monoid, where $P \subset D$ is a set of primes, $n \in \N_0$, and $D_1, \ldots, D_n$ are primary monoids, and let $H \subset D$ be a saturated submonoid.
\begin{enumerate}
\item If $\mathcal C(H, D)$ is a torsion group, then $H$ is a weakly Krull monoid.

\smallskip
\item If $D_1,...,D_n$ are seminormal finitely primary, then $H$ is  seminormal $v$-noetherian with $(H \DP \wh H) \ne \emptyset$.
\end{enumerate}
\end{lemma}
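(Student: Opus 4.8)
The plan is to derive both parts from the structural results of Section \ref{3} together with Lemma \ref{5.1}, after first recording that $D$ itself is well behaved. For part 1 I would observe that $D$ is weakly factorial: the free abelian monoid $\mathcal F(P)$ is factorial and each $D_i$ is primary, so all factors are weakly factorial, and a coproduct of weakly factorial monoids is again weakly factorial. Hence $D$ is weakly Krull, and since $H \subseteq D$ is saturated with $\mathcal C(H,D)$ torsion, Lemma \ref{5.1}.4 (with $D$ in the role of the ambient weakly Krull monoid and $H$ in that of the saturated submonoid) shows at once that $H$ is weakly Krull.

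For part 2, seminormality and the $v$-noetherian property are inherited from $D$. The factor $\mathcal F(P)$ is factorial, hence Krull, and therefore seminormal and $v$-noetherian; each $D_i$ is seminormal by hypothesis and is $v$-noetherian by Lemma \ref{3.3}.1 (its seminormalization is $v$-noetherian, and $D_i = D_i'$ as $D_i$ is seminormal). A coproduct of seminormal monoids is seminormal and a coproduct of $v$-noetherian monoids is $v$-noetherian, and both properties pass to saturated submonoids (Lemma \ref{3.1} and \cite[Propositions 2.1.11 and 2.4.4]{Ge-HK06a}); applying this first to $D$ and then to $H$ shows that $H$ is seminormal and $v$-noetherian.

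The substantive point is the nontriviality of $(H \DP \wh H)$. Here I would first compute $\wh D = \mathcal F(P) \time \prod_{i=1}^n \wh D_i$; as each $\wh D_i$ is factorial, $\wh D$ is factorial, and by Lemma \ref{3.2}.1 (each $D_i$ is seminormal and primary) we have $(D_i \DP \wh D_i) = D_i \setminus D_i^\times \neq \emptyset$, whence $(D \DP \wh D) \neq \emptyset$. Since $\wh H \subseteq \mathsf q(H) \cap \wh D$ trivially, it suffices to produce an element $d \in H$ with $d\,(\mathsf q(H) \cap \wh D) \subseteq H$. The tool is the block structure of $D$ inside the factorial monoid $\wh D$: by Lemma \ref{3.3}.1 each $D_i$ contributes a block of primes $q_1^{(i)}, \ldots, q_{s_i}^{(i)}$ of $\wh D$, and every element of $D$ has, in each block, either all exponents zero or all exponents at least one.

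I expect the main obstacle to be that $H$ need not meet the conductor $(D \DP \wh D)$, so one cannot simply restrict a conductor element of $D$. This is resolved by a reachability observation: if some $y \in \mathsf q(H) \cap \wh D$ has a positive exponent at a prime of the $i$-th block, then, writing $y$ as a quotient of two elements of $H$ and applying the block condition to the numerator, some element of $H$ already has all $i$-th block exponents at least one. Thus the blocks seen by $\mathsf q(H) \cap \wh D$ are precisely those reachable by $H$; multiplying together one such $H$-element for each of these finitely many blocks yields a single $d \in H$ with all exponents at least one on every reachable block and zero elsewhere. A direct valuation check then gives $d\,(\mathsf q(H) \cap \wh D) \subseteq D \cap \mathsf q(H) = H$, so $d \in (H \DP \wh H)$ and the conductor is nonempty.
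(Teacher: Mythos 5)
Your part 1 and the seminormality/$v$-noetherian assertions of part 2 follow the paper's proof exactly: $D$ is weakly factorial hence weakly Krull, so Lemma \ref{5.1}.4 gives part 1, and the inheritance of seminormality and the $v$-noetherian property by coproducts and saturated submonoids gives the first half of part 2. The difference is in the conductor statement. The paper computes $\wh D = \mathcal F(P) \time \wh{D_1} \time \ldots \time \wh{D_n}$, observes $(D \DP \wh D) \ne \emptyset$, and then simply cites \cite[Lemma 3.5]{Ge-Ha08b} for the implication that a saturated submonoid of a monoid with nonempty conductor again has nonempty conductor. You instead give a self-contained argument exploiting the block structure $D_i = q_1^{(i)} \cdots q_{s_i}^{(i)} \wh{D_i} \cup D_i^{\times}$ from Lemma \ref{3.3}.1, together with the correct observation that $H$ need not meet $(D \DP \wh D)$, which you repair with the reachability argument. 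This is a legitimate alternative that in effect proves the needed special case of the cited lemma; the paper's route is shorter but imports an external result. One point you should tighten: on an unreachable block $j$ the membership $d_jy_j \in D_j$ is not settled by a ``valuation check'' alone, since $y_j$ having all exponents zero only gives $y_j \in \wh{D_j}^{\times}$, whereas you need $y_j \in D_j^{\times}$ (because $D_j \cap \wh{D_j}^{\times} = D_j^{\times}$ for an archimedean primary monoid). This does follow, but by applying your reachability observation to both the numerator and the denominator of $y = ab^{-1}$ with $a,b \in H$: unreachability forces $a_j, b_j \in D_j^{\times}$, hence $y_j \in D_j^{\times}$. With that sentence added the argument is complete.
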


\begin{proof}
1. Since $D$ is  weakly factorial, it is weakly Krull, and hence  the assertion follows from Lemma \ref{5.1}.4.

\smallskip
2. Since $\mathcal F (P), D_1, \ldots, D_n$ are all seminormal, $H$ is seminormal by Lemma \ref{3.1}.
Since $D_1, \ldots, D_n$ are $v$-noetherian by Lemma \ref{3.3}, $D$ is $v$-noetherian, and hence $H$ is $v$-noetherian as a saturated submonoid. Furthermore, we have
\[
\wh D = \mathcal F (P) \time \wh{D_1}\time \ldots \time \wh{D_n}
\]
and since $(D_i \DP \wh{D_i}) \ne \emptyset$, it follows that $(D \DP \wh D) \ne \emptyset$. Again, since $H \subset D$ is saturated, $(D \DP \wh D) \ne \emptyset$ implies that $(H \DP \wh H) \ne \emptyset$ by \cite[Lemma 3.5]{Ge-Ha08b}.
\end{proof}

\medskip
\begin{proposition} \label{5.3}
Let $H$ be a $v$-noetherian weakly Krull monoid.

\begin{enumerate}
\item
If \,$\mathfrak a \in \mathcal I_v(H)$, then $\mathfrak a$ is $v$-invertible if and only if $\mathfrak a_\mathfrak p$ is principal for all $\mathfrak p \in \mathfrak X(H)$.

\smallskip

\item
If $\mathfrak p \in \mathfrak X(H)$, then $\mathcal I_v^*(H_\mathfrak p) = \{aH_\mathfrak p \mid a \in H\}$, and the map $(H_\mathfrak p)_\red \to \mathcal I_v^*(H_\mathfrak p)$, defined by $aH_\mathfrak p^\times \mapsto aH_\mathfrak p$ for all $a \in H$,  is a monoid isomorphism.

\smallskip

\item
If $a \in H_\mathfrak p$, then $aH_\mathfrak p \cap H \in \mathcal I_v^*(H)$, \ $(aH_\mathfrak p \cap H)_\mathfrak p = aH_\mathfrak p$, and if \,$\mathfrak q \in \mathfrak X(H) \setminus \{ \mathfrak p\}$, then $aH_\mathfrak p \cap (H \setminus \mathfrak q) \ne \emptyset$ and $(aH_\mathfrak p \cap H)_\mathfrak q = H_\mathfrak q$.

\smallskip

\item
The map
\[
\delta_H \colon \mathcal I_v^*(H) \to \coprod_{\mathfrak p \in \mathfrak X(H)} (H_\mathfrak p)_\red\,,
\]
defined by $\delta_H (\mathfrak a)(\mathfrak p) = a_{\mathfrak p}H_\mathfrak p^\times$ if $\mathfrak a_\mathfrak p = a_{\mathfrak p}H_\mathfrak p$, is a monoid isomorphism and induces an isomorphism \,$\overline \delta_H \colon \mathcal C_v(H) \to \mathcal C(H)$. Moreover, $\mathcal I_v^*(H)$ is $v$-noetherian and weakly factorial.
\end{enumerate}
\end{proposition}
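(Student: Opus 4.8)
The plan is to reduce every assertion to the local data of $H$ at its minimal primes and to exploit that each localization $H_{\mathfrak p}$ is primary. Throughout I would use that $H$ is weakly Krull, so $H = \bigcap_{\mathfrak p \in \mathfrak X(H)} H_{\mathfrak p}$ with finite character, together with the localization formulas recalled just before the lemma, namely $S^{-1}(H \DP X) = (S^{-1}H \DP S^{-1}X)$ and $S^{-1}(X_v) = (S^{-1}H \DP (S^{-1}H \DP S^{-1}X))$, and the fact (Lemma \ref{5.1}.1) that every $H_{\mathfrak p}$ is again $v$-noetherian. I would prove the four parts in order, since part 4 assembles the first three into the global isomorphism.

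For part 1, I would start from the criterion that $\mathfrak a \in \mathcal I_v(H)$ is $v$-invertible if and only if $(\mathfrak a (H \DP \mathfrak a))_v = H$. Applying $S^{-1}(-)$ with $S = H \setminus \mathfrak p$ and the displayed formulas, and testing the equality $\mathfrak b_v = H$ locally (which is legitimate by finite character, as such a $v$-ideal is the intersection of its localizations), shows that $\mathfrak a$ is $v$-invertible if and only if $\mathfrak a_{\mathfrak p}$ is $v$-invertible in $H_{\mathfrak p}$ for every $\mathfrak p \in \mathfrak X(H)$. It then remains to see that in the primary monoid $H_{\mathfrak p}$ a $v$-invertible $v$-ideal is principal: here $\mathfrak X(H_{\mathfrak p}) = \{\mathfrak m\}$ with $\mathfrak m = H_{\mathfrak p} \setminus H_{\mathfrak p}^\times$, localizing $H_{\mathfrak p}$ at $\mathfrak m$ returns $H_{\mathfrak p}$ itself, and the local-global description of $v$-invertible ideals (cf.\ \cite[Chapters 21--24]{HK98}) forces principality. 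This simultaneously yields part 2: the map $(H_{\mathfrak p})_{\red} \to \mathcal I_v^*(H_{\mathfrak p})$, $aH_{\mathfrak p}^\times \mapsto aH_{\mathfrak p}$, is a well-defined homomorphism, injective because $H_{\mathfrak p}$ is cancellative, and surjective by the principality just established (any integral principal generator can be taken in $H$ after clearing a denominator from $H \setminus \mathfrak p$).

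For part 3 I set $\mathfrak a = aH_{\mathfrak p} \cap H$ for $a \in H_{\mathfrak p}$. Being an intersection of fractional $v$-ideals it is a $v$-ideal of $H$, and its $v$-invertibility follows from part 1 once the localizations are computed. The identity $(aH_{\mathfrak p} \cap H)_{\mathfrak p} = aH_{\mathfrak p}$ comes from localizing at $S = H \setminus \mathfrak p$, where the constraint ``$\cap H$'' becomes vacuous. For $\mathfrak q \in \mathfrak X(H) \setminus \{\mathfrak p\}$ the crux is to produce $b \in aH_{\mathfrak p} \cap (H \setminus \mathfrak q)$; here I would use finite character, which guarantees that $a$ fails to be a unit at only finitely many minimal primes, together with a prime-avoidance argument, whence $(aH_{\mathfrak p} \cap H)_{\mathfrak q} = H_{\mathfrak q}$. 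This is the step I expect to be the main obstacle, since both localization identities must be extracted cleanly from finite character and avoidance, and they are precisely what drives surjectivity in part 4.

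Finally, for part 4 I would define $\delta_H$ by writing each $\mathfrak a_{\mathfrak p}$ as a principal ideal $a_{\mathfrak p}H_{\mathfrak p}$ (part 2) and recording the class $a_{\mathfrak p}H_{\mathfrak p}^\times$; finite character places the image in the coproduct. It is a homomorphism because $v$-multiplication localizes to the ordinary product of principal ideals, injective because a $v$-invertible $v$-ideal of the weakly Krull monoid $H$ is recovered from its localizations, and surjective because, given local data trivial at almost all $\mathfrak p$, the finite $v$-product of the ideals $a_{\mathfrak p}H_{\mathfrak p} \cap H$ of part 3 is a preimage. Since $\delta_H$ sends the principal ideal $aH$ to the image of $a$ under $\mathsf q(\varphi)$, it identifies $\mathcal H = \partial_H(H_{\red})$ with $\varphi(H)$ and hence descends to the asserted isomorphism $\overline \delta_H \colon \mathcal C_v(H) \to \mathcal C(H)$ of the two cokernels. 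The transported structure $\mathcal I_v^*(H) \cong \coprod_{\mathfrak p \in \mathfrak X(H)} (H_{\mathfrak p})_{\red}$ is then weakly factorial, being a coproduct of primary (hence weakly factorial) monoids, and $v$-noetherian, being a coproduct of the $v$-noetherian monoids $(H_{\mathfrak p})_{\red}$.
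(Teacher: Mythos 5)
Your overall architecture coincides with the paper's: parts 1 and 2 are local statements settled by localization (the paper delegates them to results of \cite{Ge-HK06a} on $v$-invertibility and $v$-local monoids), and part 4 is assembled from part 3 exactly as you describe (the paper invokes \cite[Theorem 26.5]{HK98} for the divisorial defining system, and your identification of the two cokernels and the coproduct argument for ``$v$-noetherian and weakly factorial'' match the paper). The one step where your plan does not go through as stated is the crux of part 3 --- which you yourself flag as the main obstacle without resolving it: producing $b \in aH_{\mathfrak p} \cap (H \setminus \mathfrak q)$ for $\mathfrak q \ne \mathfrak p$. Finite character and prime avoidance are not the right tools here. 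Prime avoidance concerns an ideal contained in a finite \emph{union} of primes, whereas what must be excluded is the containment $aH_{\mathfrak p} \cap H \subset \mathfrak q$ in a single prime; and the finiteness of $\{\mathfrak r \in \mathfrak X(H) \mid a \in \mathfrak r\}$ plays no role in excluding it. Since this step drives the surjectivity of $\delta_H$ in part 4, the gap propagates.

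The missing idea is that $aH_{\mathfrak p}\cap H$ is a $\mathfrak p$-primary ideal, combined with the incomparability of distinct elements of $\mathfrak X(H)=v\text{-}\max(H)$. The paper argues by contradiction: if $aH_{\mathfrak p} \cap (H\setminus\mathfrak q)=\emptyset$, then $a$ is a non-unit of $H_{\mathfrak p}$, so $aH_{\mathfrak p}\cap H\subset\mathfrak p\cap\mathfrak q$; as $H\setminus(\mathfrak p\cap\mathfrak q)$ is multiplicatively closed, there is $\overline{\mathfrak p}\in\mathfrak X(H)$ with $aH_{\mathfrak p}\cap H\subset\overline{\mathfrak p}\subset\mathfrak p\cap\mathfrak q$, and $v$-maximality forces $\mathfrak p=\overline{\mathfrak p}=\mathfrak q$. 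Equivalently and more directly: for a non-unit $a$ of $H_{\mathfrak p}$ (the unit case being trivial), pick $x\in\mathfrak p\setminus\mathfrak q$, which exists by incomparability; since $H_{\mathfrak p}$ is primary, $x^n\in aH_{\mathfrak p}$ for some $n\in\N$, and then $x^n\in aH_{\mathfrak p}\cap(H\setminus\mathfrak q)$ because $\mathfrak q$ is prime. Once this is supplied, the rest of your part 3 and all of part 4 are sound and follow the paper's route.
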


\begin{proof}
1. See \cite[Theorem 2.2.10]{Ge-HK06a}.

\smallskip

2. If $\mathfrak p \in \mathfrak X(H)$, then $H_\mathfrak p$ is $v$-local by \cite[Proposition 2.2.8.6]{Ge-HK06a}, and thus the assertion follows by \cite[Theorem 2.2.5.4]{Ge-HK06a}.

\smallskip

3. The first two assertions follow by \cite[Proposition 2.2.8.3]{Ge-HK06a}. Let now $\mathfrak p,\,\mathfrak q \in \mathfrak X(H)$ and $a \in H_\mathfrak p$ be such that $aH_\mathfrak p \cap (H \setminus \mathfrak q) = \emptyset$. Then $a \notin H_\mathfrak p^\times$, hence $a \in \mathfrak pH_\mathfrak p$ and $aH_\mathfrak p \cap H \subset \mathfrak p \cap \mathfrak q$. Since $H \setminus (\mathfrak p \cap \mathfrak q)$ is multiplicatively closed, \cite[Theorem 2.2.4]{Ge-HK06a} implies that there exists some $\overline{\mathfrak p} \in \mathfrak X(H)$ such that $aH_\mathfrak p \cap H \subset \overline{\mathfrak p} \subset  \mathfrak p \cap \mathfrak q$. Since $\mathfrak X(H) = v\text{-}\max(H)$, it follows that $\mathfrak p = \mathfrak q = \overline{\mathfrak p}$. In particular, if $\mathfrak p \ne \mathfrak q$, then $aH_\mathfrak p \cap (H \setminus \mathfrak q) \ne \emptyset$, and therefore $(aH_\mathfrak p \cap H)_\mathfrak q = H_\mathfrak q$.

\smallskip

4. If $\mathfrak p,\, \mathfrak q \in \mathfrak X(H)$ and $\mathfrak p \ne \mathfrak q$, then $aH_\mathfrak p \cap H_\mathfrak q^\times \cap H \ne \emptyset$ for all $a \in H \subset H_\mathfrak p$. Hence $(H_\mathfrak p)_{\mathfrak p \in \mathfrak X(H)}$ is a divisorial defining system for $H$, and $\delta_H$ is an isomorphism by \cite[Theorem 26.5]{HK98}. Since the coproduct of a family of $v$-noetherian weakly factorial monoids is $v$-noetherian and weakly factorial, $\mathcal I_v^*(H)$ is also $v$-noetherian and weakly factorial.

By definition,
\[\mathcal C_v(H) = \coker \bigl( \mathsf q( \partial_H)\bigr), \quad \mathcal C(H) = \coker \Bigl( \mathsf q(H) \to \coprod_{\mathfrak p \in \mathfrak X(H)} \mathsf q(H)/H_\mathfrak p^\times \Bigr),
\]
and there is an isomorphism
\[
\mathsf q(\delta_H) \colon \mathsf q(\mathcal I_v^*(H)) \to \coprod_{\mathfrak p \in \mathfrak X(H)} \mathsf q(H)/H_\mathfrak p^\times
\]
which satisfies $\mathsf q(\delta_H) (aH) = (aH_\mathfrak p^\times)_{\mathfrak p \in \mathfrak X(H)}$ for all $a \in \mathsf q(H)$ and induces an isomorphism $\overline \delta_H \colon \mathcal C_v(H) \to \mathcal C(H)$.
\end{proof}

\medskip
Based on the last proposition we  deduce an exact sequence
involving the class groups of $H$ and $\wh{H}$. The existence of such an exact sequence is a classical result for  one-dimensional noetherian
domains, and it was first generalized to arbitrary noetherian domains in \cite[Theorem 3]{Ka99b}. Further variants in more general settings can be found in \cite[Chapter 26]{HK98} and in  \cite[Theorem 2.6.7]{Ge-HK06a}.

\medskip
\begin{proposition} \label{exactsequence}
Let $H$ be a  $v$-noetherian weakly Krull monoid with $\emptyset \ne \mathfrak f = (H \DP \wh H) \subsetneq H$
such that  $H_{\mathfrak{p}}$ is finitely primary for each $\mathfrak{p}\in\mathfrak{X}(H)$. Then there is an exact sequence
\[
1 \to
\wh{H}^\times/H^\times  \to \coprod_{\mathfrak{p}\in
\mathfrak{X}(H)}\wh{H}_{\mathfrak{p}}^\times/H^\times_{\mathfrak{p}} \overset{\ve} \to
\mathcal{C}_v(H)\overset{\overline{\vartheta}} \to \mathcal{C}_v(\wh{H}) \to 0   \,,
\]
where $\varepsilon$ is given by
\[
\varepsilon = \overline \delta^{-1}_H \circ \pi \Bigm| \coprod_{\mathfrak p \in \mathfrak X(H)} \wh H_\mathfrak p^\times/H_\mathfrak p^\times, \quad  \quad \pi \colon \coprod_{\mathfrak p \in \mathfrak X(H)} \mathsf q(H_\mathfrak p)/H_\mathfrak p^\times \to \mathcal C(H) \quad \text{is canonical }
\]
and \,$\overline \delta_H \colon \mathcal C_v(H) \to \mathcal C(H)$ is the isomorphism given in Proposition \ref{5.3}.4. If \,$C =[\mathfrak a] \in \mathcal C_v(H)$, where $\mathfrak a \in \mathcal I_v^*(H)$, then $\overline \vartheta (C) = [\mathfrak a_{v(\wh H)}] \in \mathcal C_v(\wh H)$.
\end{proposition}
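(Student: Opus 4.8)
The plan is to realize the asserted sequence as the snake-lemma sequence of a morphism between the two short exact sequences that \emph{define} the weak divisor class groups of $H$ and of $\wh H$. Throughout I identify $\mathcal C_v(H) \cong \mathcal C(H)$ and $\mathcal C_v(\wh H)\cong\mathcal C(\wh H)$ via the isomorphisms $\overline\delta_H,\overline\delta_{\wh H}$ of Proposition \ref{5.3}.4, and I write $Q=\mathsf q(H)=\mathsf q(\wh H)$. First I would record that $\wh H$ is Krull: since each $H_{\mathfrak p}$ is finitely primary, Lemma \ref{3.3}.1 shows $\wh H_{\mathfrak p}=\wh{H_{\mathfrak p}}$ is factorial, so by Lemma \ref{5.1} the localizations $\wh H_{\mathfrak P}$ $(\mathfrak P\in\mathfrak X(\wh H))$ are discrete valuation monoids and $\wh H$ is $v$-noetherian weakly Krull; hence Proposition \ref{5.3} applies to $\wh H$ as well.

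Next I set up the diagram. Put $G(H)=\coprod_{\mathfrak p\in\mathfrak X(H)}Q/H_{\mathfrak p}^\times$ and $G(\wh H)=\coprod_{\mathfrak P\in\mathfrak X(\wh H)}Q/\wh H_{\mathfrak P}^\times$. By the definition of the weak divisor class group there are exact sequences
\[
1 \to Q/H^\times \xrightarrow{\ \iota\ } G(H) \xrightarrow{\ \pi\ } \mathcal C(H) \to 0, \qquad 1 \to Q/\wh H^\times \xrightarrow{\ \wh\iota\ } G(\wh H) \xrightarrow{\ \wh\pi\ } \mathcal C(\wh H) \to 0,
\]
injectivity on the left following from $\bigcap_{\mathfrak p}H_{\mathfrak p}^\times=H^\times$ (resp. $\bigcap_{\mathfrak P}\wh H_{\mathfrak P}^\times=\wh H^\times$), which holds since $H=\bigcap_{\mathfrak p}H_{\mathfrak p}$. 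I join the rows by the natural projection $f\colon Q/H^\times\to Q/\wh H^\times$ on the left and, in the middle, by the block-diagonal map $g$ sending the $\mathfrak p$-entry $aH_{\mathfrak p}^\times$ to $(a\wh H_{\mathfrak P}^\times)_{\mathfrak P\mid\mathfrak p}$; this is well defined because $H_{\mathfrak p}^\times\subset\wh H_{\mathfrak P}^\times$, and it has finite character. Both squares commute, so $g$ induces a map $\overline\vartheta'\colon\mathcal C(H)\to\mathcal C(\wh H)$ on cokernels. Using the localization identities for $v$-ideals stated before Lemma \ref{5.1} together with $\delta_H,\delta_{\wh H}$, I would check that $\overline\vartheta'$ corresponds, under $\overline\delta_H$ and $\overline\delta_{\wh H}$, to the assignment $[\mathfrak a]\mapsto[\mathfrak a_{v(\wh H)}]$, i.e. to $\overline\vartheta$.

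The three homological invariants are then computed directly. On the left $f$ is surjective with $\ker f=\wh H^\times/H^\times$. In the middle, the kernel of the $\mathfrak p$-block of $g$ is $\bigl(\bigcap_{\mathfrak P\mid\mathfrak p}\wh H_{\mathfrak P}^\times\bigr)/H_{\mathfrak p}^\times=\wh H_{\mathfrak p}^\times/H_{\mathfrak p}^\times$, using $\wh H_{\mathfrak p}=\bigcap_{\mathfrak P\mid\mathfrak p}\wh H_{\mathfrak P}$ from Lemma \ref{5.1}.3, so $\ker g=\coprod_{\mathfrak p}\wh H_{\mathfrak p}^\times/H_{\mathfrak p}^\times$, exactly the middle term of the asserted sequence. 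The crucial point is the surjectivity of $g$: its $\mathfrak p$-block is induced by the valuation map $Q=\mathsf q(\wh H_{\mathfrak p})\to\prod_{\mathfrak P\mid\mathfrak p}Q/\wh H_{\mathfrak P}^\times\cong\Z^{s_{\mathfrak p}}$, and since $H_{\mathfrak p}$ is finitely primary its complete integral closure is $F=F^\times\time[q_1,\dots,q_{s_{\mathfrak p}}]$ with $v_{\mathfrak P_i}(q_j)=\delta_{ij}$, so the primes $q_j$ realize a basis of $\Z^{s_{\mathfrak p}}$ and the block is onto; hence $\coker g=0$.

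Finally I apply the snake lemma. Because $\coker f=\coker g=0$, the connecting homomorphism vanishes and the snake sequence collapses to
\[
1 \to \ker f \to \ker g \xrightarrow{\ \varepsilon\ } \mathcal C(H) \xrightarrow{\ \overline\vartheta'\ } \mathcal C(\wh H) \to 0,
\]
in which the map $\ker g\to\mathcal C(H)$ is the restriction of $\pi$, which is precisely $\varepsilon$ after the identification $\mathcal C(H)\cong\mathcal C_v(H)$. Transporting through $\overline\delta_H$ and $\overline\delta_{\wh H}$ yields the stated sequence. I expect the main obstacle to be the bookkeeping of the second paragraph, namely confirming that the induced cokernel map $\overline\vartheta'$ really is the ideal-extension map $[\mathfrak a]\mapsto[\mathfrak a_{v(\wh H)}]$; this needs the localization formulas for $v$-ideals and the explicit shape of $\delta_H$, whereas the surjectivity of $g$ is immediate once the finitely primary structure of the $H_{\mathfrak p}$ is invoked.
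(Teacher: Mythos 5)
Your proposal is correct and is essentially the paper's argument: both proofs chase the same morphism of short exact sequences presenting $\mathcal C(H)$ and $\mathcal C(\wh H)$, with the middle vertical map being the localization-wise projection $\coprod_{\mathfrak p}\mathsf q(H)/H_{\mathfrak p}^\times \to \coprod_{\mathfrak P}\mathsf q(H)/\wh H_{\mathfrak P}^\times$ (the paper's $\psi$, your $g$), whose surjectivity and kernel $\coprod_{\mathfrak p}\wh H_{\mathfrak p}^\times/H_{\mathfrak p}^\times$ are computed exactly as you do from the re-indexing $\mathfrak X(\wh H)=\bigcup_{\mathfrak p}S(\mathfrak p)$ of Lemma \ref{5.1}.3 and the factorial structure of $\wh{H_{\mathfrak p}}$. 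The only cosmetic difference is that the paper first constructs the ideal-extension map $\vartheta\colon\mathcal I_v^*(H)\to\mathcal I_v^*(\wh H)$ and transports via $\delta_H$, doing the snake-lemma chase by hand, whereas you work on the coproduct side throughout and defer the identification of the induced cokernel map with $[\mathfrak a]\mapsto[\mathfrak a_{v(\wh H)}]$ to a final check --- which is indeed the remaining bookkeeping you flag.
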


\begin{proof}
By \cite[Theorem 2.3.5.1]{Ge-HK06a}, $\wh H$ is a fractional $v$-ideal of $H$. Hence, if $\mathfrak a$ is a $v$-ideal of $\wh H$, then $\mathfrak a \cap H = (\wh H \DP (\wh H \DP \mathfrak a))\cap H$ is a $v$-ideal of $H$. If we temporarily denote the $v$-operations on $H$ and $\wh H$ by $v(H)$ and $v(\wh H)$, then it follows by \cite[Propositions 5.4 and 5.5]{HK98} that the inclusion $H \hookrightarrow \wh H$ is a $(v(H),v(\wh H))$-homomorphism and induces a monoid homomorphism $\vartheta' \colon \mathcal I_v(H) \to \mathcal I_v(\wh H)$, given by $\vartheta'(\mathfrak a) = \mathfrak a_{v(\wh H)}$. Since $\wh H$ is a Krull monoid, it follows that $\mathcal I_v^*(\wh H) = \mathcal I_v(\wh H) \setminus \{\emptyset\}$, and therefore we obtain a homomorphism \,$\vartheta = \vartheta' \t \mathcal I_v^*(H)\colon \mathcal I_v^*(H) \to \mathcal I_v^*(\wh H)$. Since $\vartheta(aH) = a\wh H$ for all $a \in H$, we obtain the following commutative diagram with exact rows (where $c$ is the homomorphism induced by the inclusion $H \hookrightarrow \wh H$, and $\partial_H \colon H_\red \to \mathcal I_v^*(H)$ is defined by $\partial(aH^\times) = aH$ for all $a \in H$).
\[
\begin{CD}
1 @> >>  \mathsf q(H)/H^\times @> \mathsf q( \partial_H) >> \mathsf q(\mathcal I_v^*(H)) @> >> \mathcal C_v(H) @>>> \boldsymbol 0 \\
@.          @V c VV              @V \mathsf q(\vartheta)  VV  @VV \overline \vartheta V @.\\
1 @> >>  \mathsf q(\wh H)/\wh H^\times @> \mathsf q( \partial_{\wh H})>>\mathsf q(\mathcal I_v^*(\wh H))  @>>> \mathcal C_v(\wh H)@>>> \boldsymbol 0\,.
\end{CD}
\]
Since $c$ is surjective and \,$\Ker(c) = \wh H^\times /H^\times$, we obtain an exact sequence
\[
1 \ \to \ \wh H^\times /H^\times \ \to \ \Ker (\mathsf q(\vartheta)) \ \to \ \Ker(\overline \vartheta) \ \to \boldsymbol 0,
\]
If $\mathfrak p \in \mathfrak X(H)$, then $S(\mathfrak p) =\{ \mathfrak P \in \mathfrak X(\wh H) \mid \mathfrak P \cap H = \mathfrak p\} = \{ \mathfrak q \cap \wh H \mid \mathfrak q \in \mathfrak X(\wh H_\mathfrak p)\}$, \ $\wh H_\mathfrak p = \wh{H_\mathfrak p}$ is factorial, and therefore
\[
(\wh H_\mathfrak p)_\red = \coprod_{\mathfrak q \in \mathfrak X(\wh H_\mathfrak p)} \bigl( (\wh H_\mathfrak p)_{\mathfrak q}\bigr)_\red = \coprod_{\mathfrak q \in \mathfrak X(\wh H_{\mathfrak p})} (\wh H_{\mathfrak q \cap \wh H})_\red = \coprod_{\mathfrak P \in S(\mathfrak p)} (\wh H_{\mathfrak P})_\red.
\]
Since $\mathsf q(H_\mathfrak p) = \mathsf q(\wh H_\mathfrak p) = \mathsf q(\wh H) = \mathsf q(H)$, we get
and therefore
\[
\coprod_{\mathfrak p \in \mathfrak X(H)} \mathsf q \bigl( (\wh H_\mathfrak p)_\red \bigr)=\coprod_{\mathfrak p \in \mathfrak X(H)}\mathsf q(H)/\wh H_\mathfrak p^\times=\coprod_{\mathfrak p \in \mathfrak X(H)} \coprod_{\mathfrak P \in S(\mathfrak p)}\mathsf q(H)/\wh H^\times_\mathfrak P=\coprod_{\mathfrak P \in \mathfrak X(\wh H)} \mathsf q(H)/\wh H^\times_\mathfrak P.
\]
For $\mathfrak p \in \mathfrak X(H)$, the inclusion \,$H_\mathfrak p \hookrightarrow \wh H_\mathfrak p$ yields an epimorphism
$\psi_\mathfrak p \colon \mathsf q(H)/H_\mathfrak p^\times \to \mathsf q(H)/\wh H_\mathfrak p^\times$. Hence we obtain an epimorphism
\[
\psi = (\psi_\mathfrak p)_{\mathfrak p \in \mathfrak X(H)} \colon \coprod_{\mathfrak p \in \mathfrak X(H)} \mathsf q(H)/H_\mathfrak p^\times \to \coprod_{\mathfrak p \in \mathfrak X(H)} \mathsf q(H)/\wh H_\mathfrak p^\times=\coprod_{\mathfrak P \in \mathfrak X(\wh H)} \mathsf q(H)/\wh H^\times_\mathfrak P
\]
and an obviously commutative diagram
\[
\begin{CD}
 \mathsf q(\mathcal I_v^*(H)) @> \mathsf q(\delta_H) >> \coprod_{\mathfrak p \in \mathfrak X(H)} \mathsf q(H)/H_\mathfrak p^\times \\
@V \mathsf q(\vartheta) VV              @V \psi  VV  \\
\mathsf q(\mathcal I_v^*(\wh H)) @> \mathsf q(\delta_{\wh H}) >> \coprod_{\mathfrak P \in \mathfrak X(\wh H)} \mathsf q(H)/\wh H_\mathfrak P^\times \,,
\end{CD}
\]
where $\mathsf q(\delta_H)$ and $\mathsf q(\delta_{\wh H})$ are the quotients of the isomorphisms given in Proposition 5.3 (for $H$ and $\wh H$). Hence $\mathsf q(\vartheta)$ is an epimorphism, and thus $\overline \vartheta \colon \mathcal C_v(H) \to \mathcal C_v(\wh H)$ is an epimorphism. Observe that $\mathsf q(\delta_H)$ induces an isomorphism
\[
\mathsf q(\delta_H) \t \Ker( \mathsf q(\vartheta)) \colon \Ker( \mathsf q(\vartheta)) \ \to \ \Ker(\psi) = \coprod_{\mathfrak p \in \mathfrak X(H)} \wh H_\mathfrak  p^\times/H_\mathfrak p^\times
\]
satisfying $\mathsf q(\delta_H)(aH^\times) = (aH_\mathfrak p^\times)_{\mathfrak p \in \mathfrak X(H)}$ for all $a \in \wh H^\times$, and therefore we obtain the exact sequence
\[
1 \ \to \ \wh H^\times /H^\times \ \to \ \coprod_{\mathfrak p \in \mathfrak X(H)} \wh H_\mathfrak p^\times /H_\mathfrak p^\times \ \overset \varepsilon \to \ \mathcal C_v(H) \ \overset{\overline \vartheta} \to \ \mathcal C_v(\wh H) \ \to \ \boldsymbol 0,
\]
where $\varepsilon$ is given as asserted.
\end{proof}

\medskip
\begin{theorem}[\bf{Algebraic structure}] \label{5.5}
Let $H$ be a seminormal $v$-noetherian weakly Krull monoid with $\emptyset \ne \mathfrak f = (H \DP \wh H) \subsetneq H$. We set $\mathcal P^* = \{ \mathfrak p \in \mathfrak X(H) \mid \mathfrak p \supset \mathfrak f \}$ and $\mathcal P = \mathfrak X (H) \setminus \mathcal P^*$.
\begin{enumerate}
\item $\wh H$ is Krull,   $\mathcal P^*$ is finite and, for each $\mathfrak p \in \mathcal P$,  $H_{\mathfrak p}$ is a discrete valuation monoid.
    For each $\mathfrak p \in \mathfrak X (H)$, $H_{\mathfrak p}$ is seminormal $v$-noetherian primary, and if
    $H$ is the multiplicative monoid of nonzero elements of a domain, then  $H_{\mathfrak p}$ is even finitely primary.

\smallskip
\item There is a monoid isomorphism
\[
\chi \colon \mathcal I_v^*(H) \to \mathcal F (\mathcal P) \times \prod_{\mathfrak p \in \mathcal P^*} (H_\mathfrak p)_\red \quad \text{satisfying} \quad \chi \t \mathcal  P = \id_{\mathcal P}.
\]
$\mathcal I_v^* (H)$ is seminormal $v$-noetherian weakly factorial, and the $v$-class group $\mathcal C_v (H)$ of $H$ is isomorphic to the weak divisor class group $\mathcal C (H)$ of $H$.

\smallskip
\item Suppose that $H_{\mathfrak p}$ is finitely primary   for each $\mathfrak p \in \mathfrak X (H)$.
    \begin{enumerate}
    \item There is an exact sequence
          \[
          1\rightarrow \wh{H}^\times/H^\times\rightarrow\coprod_{\mathfrak{p}\in
          \mathfrak{X}(H)}\wh{H}_{\mathfrak{p}}^\times/H^\times_{\mathfrak{p}}\overset{\ve}{\rightarrow}
          \mathcal{C}_v(H)\overset{\overline{\vartheta}}{\rightarrow}\mathcal{C}_v(\wh{H})\rightarrow 0   \,.
          \]

    \item Suppose that  $\wh H_{\mathfrak p}^{\times}/H_{\mathfrak p}^{\times}$ is finite for each $\mathfrak p \in \mathcal P^*$.
          Then $\mathcal I_v^* (H)$ is  a {\rm C}-monoid, and if   $\mathcal C_v (H)$ is finite, then $H$ is a {\rm C}-monoid.
    \end{enumerate}
\end{enumerate}
\end{theorem}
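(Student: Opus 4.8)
The plan is to reduce every assertion to the local behaviour of $H$ at the primes $\mathfrak p \in \mathfrak X(H)$ and then to the structural description of $\mathcal I_v^*(H)$ furnished by Proposition \ref{5.3} and Proposition \ref{exactsequence}. For Part (1) I would first record that $\wh H$ is Krull: it is completely integrally closed by definition, and since $H$ is $v$-noetherian with $(H \DP \wh H) = \mathfrak f \ne \emptyset$ it is also $v$-noetherian by \cite[Theorem 2.3.5]{Ge-HK06a}. For each $\mathfrak p \in \mathfrak X(H)$ the localization $H_\mathfrak p$ is primary, and it is $v$-noetherian and seminormal by Lemma \ref{5.1}.1 and Lemma \ref{5.1}.2; this gives the general local statement. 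If $H = R^\bullet$, then $H_\mathfrak p = (R_\mathfrak p)^\bullet$ is the monoid of a seminormal one-dimensional local Mori domain, hence finitely primary by Lemma \ref{3.3}.3. For the dichotomy I fix $\mathfrak p \in \mathcal P$ (so $\mathfrak f \not\subset \mathfrak p$) and choose $c \in \mathfrak f \setminus \mathfrak p$; then $c\wh H \subset H$ and $c \in H_\mathfrak p^\times$, so $\wh{H_\mathfrak p} = (H \setminus \mathfrak p)^{-1}\wh H \subset c^{-1}H_\mathfrak p = H_\mathfrak p$, whence $H_\mathfrak p$ is completely integrally closed; being $v$-noetherian and primary it is a primary Krull monoid, i.e. a discrete valuation monoid. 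Finally, $\mathfrak f$ is a nonempty ideal of the Krull monoid $\wh H$, so by finite character only finitely many $\mathfrak P \in \mathfrak X(\wh H)$ contain $\mathfrak f$; since $\mathfrak X(\wh H) \to \mathfrak X(H)$, $\mathfrak P \mapsto \mathfrak P \cap H$, is surjective (Lemma \ref{5.1}.3) and each $\mathfrak p \in \mathcal P^*$ lifts to some such $\mathfrak P$, the set $\mathcal P^*$ is finite.

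For Part (2) I invoke Proposition \ref{5.3}.4, which yields the isomorphism $\delta_H \colon \mathcal I_v^*(H) \to \coprod_{\mathfrak p \in \mathfrak X(H)} (H_{\mathfrak p})_\red$, the isomorphism $\overline{\delta}_H \colon \mathcal C_v(H) \to \mathcal C(H)$, and the fact that $\mathcal I_v^*(H)$ is $v$-noetherian and weakly factorial. Splitting the coproduct along $\mathfrak X(H) = \mathcal P \cup \mathcal P^*$ and using Part (1), each factor with $\mathfrak p \in \mathcal P$ satisfies $(H_{\mathfrak p})_\red \cong (\N_0,+)$, so $\coprod_{\mathfrak p \in \mathcal P}(H_{\mathfrak p})_\red \cong \mathcal F(\mathcal P)$, while $\mathcal P^*$ is finite so the remaining coproduct is the finite product $\prod_{\mathfrak p \in \mathcal P^*}(H_{\mathfrak p})_\red$; this produces $\chi$ with $\chi \t \mathcal P = \id_{\mathcal P}$. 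Seminormality of $\mathcal I_v^*(H)$ then follows from Lemma \ref{3.1}.2 and Lemma \ref{3.1}.3, since every $(H_{\mathfrak p})_\red$ is seminormal by Part (1).

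Part (3)(a) is exactly Proposition \ref{exactsequence}, all of whose hypotheses are now in force. For (3)(b) I use the decomposition of (2): under the assumption that $H_\mathfrak p$ is finitely primary, each factor $(H_{\mathfrak p})_\red$ with $\mathfrak p \in \mathcal P^*$ is seminormal finitely primary and hence a C-monoid as in the proof of Lemma \ref{3.3}.1 (via \cite[Corollary 2.9.8]{Ge-HK06a}), the free factor $\mathcal F(\mathcal P)$ is a C-monoid, and a finite product of C-monoids is a C-monoid, so $\mathcal I_v^*(H)$ is a C-monoid. To pass to $H$ I note that $H_\red$ is a saturated cofinal submonoid $\mathcal H$ of $\mathcal I_v^*(H)$ with $\cls{\mathcal H}{\mathcal I_v^*(H)} = \mathcal C_v(H)$; assuming in addition that each $\wh H_\mathfrak p^\times/H_\mathfrak p^\times$ is finite, the exact sequence in (a) forces $\wh H^\times/H^\times$ to be finite, and finiteness of $\mathcal C_v(H)$ forces $\mathcal C_v(\wh H) \cong \mathcal C(\wh H)$ to be finite. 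Together with $\mathcal P^*$ finite and $\wh H$ Krull, these are exactly the inputs of the C-monoid criterion for $v$-noetherian monoids with Krull complete integral closure, and I would conclude that $H$ is a C-monoid by citing that criterion (\cite{Ge-HK06a, Re13a}).

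The routine parts are (1)'s localization bookkeeping, (2)'s repackaging of $\delta_H$, and (3)(a). The main obstacle is (3)(b): C-monoid theory is deliberately not developed in the excerpt, so the delicate points are verifying that each seminormal finitely primary factor meets the precise hypotheses of \cite[Corollary 2.9.8]{Ge-HK06a}, that finite products of C-monoids are again C-monoids, and above all extracting from the exact sequence of (a) the exact finiteness data ($\wh H^\times/H^\times$, $\mathcal C(\wh H)$, and $\mathcal P^*$) needed to certify $H$ itself as a C-monoid. Pinning down which of these finiteness hypotheses are genuinely required for ``$\mathcal I_v^*(H)$ is a C-monoid'' as opposed to for ``$H$ is a C-monoid'' is the most error-prone step.
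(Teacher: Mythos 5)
Parts (1), (2) and (3)(a) of your proposal match the paper's proof in substance: the paper also gets the local statements from Lemma \ref{5.1}, the domain case from Lemma \ref{3.3}, the isomorphism $\chi$ and the weakly factorial/$v$-noetherian claims from Proposition \ref{5.3}.4 together with $(H_\mathfrak p)_\red\cong\N_0$ for $\mathfrak p\in\mathcal P$, and (3)(a) verbatim from Proposition \ref{exactsequence}. Your direct argument that $H_\mathfrak p$ is a discrete valuation monoid for $\mathfrak p\in\mathcal P$ (choosing $c\in\mathfrak f\setminus\mathfrak p$ to force $\wh{H_\mathfrak p}=H_\mathfrak p$) is a legitimate replacement for the paper's citation of \cite[Theorem 2.6.5]{Ge-HK06a}, and your route to the finiteness of $\mathcal P^*$ via $\wh H$ works, though the paper's is shorter ($v$-noetherian monoids have only finitely many prime $v$-ideals over a given element of $\mathfrak f$).

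The genuine gap is in (3)(b), at the step ``a finite product of C-monoids is a C-monoid, so $\mathcal I_v^*(H)$ is a C-monoid.'' This general principle is false: the reduced class semigroup of $D_1\time D_2$ in $F_1\time F_2$ contains the classes of all elements $(y_1,u_2)$ with $y_1$ a non-unit and $u_2\in F_2^\times$, and the classes of \emph{units} of $F_2$ are not controlled by the C-monoid property of $D_2$ (only classes of non-units are). This is exactly why the hypothesis that $\wh H_\mathfrak p^\times/H_\mathfrak p^\times$ be finite for each $\mathfrak p\in\mathcal P^*$ appears in the statement of (3)(b) \emph{for the conclusion about} $\mathcal I_v^*(H)$, not only for the passage to $H$; the paper feeds precisely this finiteness into \cite[Theorem 2.9.16]{Ge-HK06a} to certify the product $\mathcal F(\mathcal P)\time\coprod_{\mathfrak p}(H_\mathfrak p)_\red$ as a C-monoid. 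Your proposal instead spends that hypothesis only on deducing finiteness of $\wh H^\times/H^\times$ from the exact sequence, and then appeals to an unspecified ``C-monoid criterion for $v$-noetherian monoids with Krull complete integral closure''; no such criterion with only the inputs you list is available in the monoid setting (the domain analogue needs finiteness of $R/\mathfrak f$, which is much stronger than finiteness of $\wh R^\times/R^\times$). The paper's actual passage from $\mathcal I_v^*(H)$ to $H$ is different and simpler: $H_\red$ is a saturated submonoid of the C-monoid $\mathcal I_v^*(H)$ with class group $\mathcal C_v(H)$, so if $\mathcal C_v(H)$ is finite then $H_\red$, hence $H$, is a C-monoid by \cite[Theorems 2.9.10 and 2.9.16]{Ge-HK06a}. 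You correctly flagged this allocation of finiteness hypotheses as the error-prone step; as written, it is where the argument breaks.
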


\begin{proof}
1.  Since $H$ is $v$-noetherian, the set of prime $v$-ideals containing a fixed element is finite, and thus $\mathcal P^*$ is finite. By \cite[Theorem 2.6.5]{Ge-HK06a}, $\wh H$ is Krull and, for each $\mathfrak p \in \mathcal P$, $H_{\mathfrak p}$ is a discrete valuation monoid. All $H_{\mathfrak p}$ are seminormal $v$-noetherian primary by Lemma \ref{5.1}. If $R$ is a domain and $H = R^{\bullet}$, then Lemma \ref{3.3} implies that $H_{\mathfrak p}$ is finitely primary for each $\mathfrak{p}\in\mathfrak{X}(H)$.

\smallskip
2. By 1., $(H_\mathfrak p)_\red \cong \N_0$ for all $\mathfrak p \in \mathcal P$, and thus Proposition \ref{5.3}.4 implies
\[
\mathcal I_v^*(H) \cong \coprod_{\mathfrak p \in \mathfrak X(H)} (H_\mathfrak p)_\red \cong \N_0^{(\mathcal P)} \time \coprod_{\mathfrak p \in \mathcal P^*} (H_\mathfrak p)_\red\cong \mathcal F(\mathcal P) \time \coprod_{\mathfrak p \in \mathcal P^*} (H_\mathfrak p)_\red.
\]
By Proposition \ref{5.3}.4, we further obtain that $\mathcal C_v(H) \cong \mathcal C(H)$.

\smallskip
3.(a) follows from Proposition \ref{exactsequence}, and it remains to show 3.(b). If $\mathfrak p \in \mathfrak X(H)$, then $H_\mathfrak p$ is a C-monoid by Lemma \ref{3.3}.1, and if $\wh {H_\mathfrak p}^\times /H_\mathfrak p^\times$ is finite for all $\mathfrak p \in \mathcal P^*$, then
\[
\mathcal F(\mathcal P) \time \coprod_{\mathfrak p \in \mathfrak X(H)} H_\mathfrak p \quad \text{and thus also} \quad \Bigl(\mathcal F(\mathcal P) \time \coprod_{\mathfrak p \in \mathfrak X(H)} H_\mathfrak p \Bigr)_\red = \mathcal F(\mathcal P) \time \coprod_{\mathfrak p \in \mathfrak X(H)} (H_\mathfrak p)_\red
\]
is a C-monoid by \cite[Theorem 2.9.16]{Ge-HK06a} (since obviously $\mathcal F(P)$ is a C-monoid). Hence $\mathcal I_v^*(H)$ is a C-monoid by 1. Since $H_\red$ is isomorphic to a saturated submonoid of $\mathcal I_v^*(H)$ with class group $\mathcal C_v(H)$, it follows that $H_\red$ and thus also $H$ is a C-monoid by \cite[Theorems 2.9.10 and 2.9.16]{Ge-HK06a}, provided that $\mathcal C_v(H)$ is finite.
\end{proof}

The equivalent conditions (a) - (e) of the next corollary play a central role in our main arithmetical result (Theorem \ref{5.6}) and in the characterization of half-factoriality (Theorem \ref{6.2}).

\medskip
\begin{corollary}\label{L:CharClsIso}
Let $H$ be a seminormal $v$-noetherian weakly Krull monoid with $\emptyset \ne \mathfrak f = (H \DP \wh H) \subsetneq H$ such that  $H_{\mathfrak{p}}$ is  finitely primary of rank one for each $\mathfrak{p}\in\mathfrak{X}(H)$. Let
\[
D=\coprod_{\mathfrak{p}\in\mathfrak{X}(H)}(H_{\mathfrak{p}})_{\red}\quad\text{ and let}\quad\pi\colon\mathsf q(D)\rightarrow\mathcal{C}(H)
\]
be the canonical homomorphism. For $\mathfrak{p}\in\mathfrak{X}(H)$ let
\[
\mu_{\mathfrak{p}}:\mathsf q(H)\rightarrow\mathsf q(D)\qquad\text{be defined by}\qquad\mu_{\mathfrak{p}}(x)(\mathfrak q)=\begin{cases} xH_{\mathfrak{q}}^{\times}&\text{if }\mathfrak{q}=\mathfrak{p}\\
H_{\mathfrak{q}}^{\times}&\text{if }\mathfrak{q}\not=\mathfrak{p}
\end{cases}
\text{ and let}
\]
\[
\psi:\mathsf q(H)/H^\times\rightarrow\mathsf q(D)\qquad\text{be defined by}\qquad\psi(xH^{\times})=(xH_{\mathfrak{q}}^{\times})_{\mathfrak{q}\in\mathfrak{X}(H)}.
\]
Then the following statements are equivalent{\rm \,:}
\begin{enumerate}
\item[(a)] The homomorphism $\overline \vartheta \colon \mathcal C_v(H) \to \mathcal C_v(\wh H)$ $($in the exact sequence of {\rm Prop. \ref{exactsequence})} is an isomorphism.

\smallskip
\item[(b)] $\pi \t \wh D^\times = 0$.

\smallskip
\item[(c)] $\wh D^\times =\psi(\wh H^\times /H^\times)$.

\smallskip
\item[(d)] $(\pi\circ\mu_{\mathfrak p})(u) = (\pi\circ\mu_{\mathfrak p})(v)$ for all $\mathfrak p \in \mathfrak X(H)$ and $u,\,v \in \mathcal A(H_\mathfrak p)$.

\smallskip
\item[(e)] $(uH_\mathfrak p \cap H)\mathsf q(H) = (vH_\mathfrak p \cap H)\mathsf q(H)$ for all $\mathfrak p \in \mathfrak X(H)$ and $u,\,v \in \mathcal A(H_\mathfrak p)$.
\end{enumerate}
\end{corollary}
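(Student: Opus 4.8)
The plan is to anchor all the equivalences on statement (b) and to establish (a)$\Leftrightarrow$(b), (b)$\Leftrightarrow$(c), (b)$\Leftrightarrow$(d), and (d)$\Leftrightarrow$(e). The preliminary observation I would record is that, since each $H_{\mathfrak p}$ is seminormal finitely primary of rank one, the coproduct $D$ has $\wh D = \coprod_{\mathfrak p \in \mathfrak X(H)} \wh{(H_{\mathfrak p})_{\red}}$, where $\wh{(H_{\mathfrak p})_{\red}} = \wh H_{\mathfrak p}/H_{\mathfrak p}^{\times}$ has unit group $\wh H_{\mathfrak p}^{\times}/H_{\mathfrak p}^{\times}$; hence $\wh D^{\times} = \coprod_{\mathfrak p \in \mathfrak X(H)} \wh H_{\mathfrak p}^{\times}/H_{\mathfrak p}^{\times}$, which is precisely the middle term of the exact sequence of Proposition \ref{exactsequence}. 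I would also note that since $D$ is reduced, $\pi$ is the quotient map $\mathsf q(D) \to \mathsf q(D)/\psi(\mathsf q(H)/H^{\times}) = \mathcal C(H)$, so that $\Ker \pi = \psi(\mathsf q(H)/H^{\times})$. Granting these, (a)$\Leftrightarrow$(b) is immediate: $\overline\vartheta$ is surjective by exactness, so it is an isomorphism iff it is injective, iff the image of $\varepsilon$ equals $\Ker \overline\vartheta = 0$, iff $\varepsilon = 0$; and since $\varepsilon = \overline\delta_H^{-1} \circ (\pi \t \wh D^{\times})$ with $\overline\delta_H$ an isomorphism (Proposition \ref{5.3}.4), this is equivalent to $\pi \t \wh D^{\times} = 0$.

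For (b)$\Leftrightarrow$(c) the decisive point is the always-valid identity $\psi(\wh H^{\times}/H^{\times}) = \wh D^{\times} \cap \Ker \pi$. The inclusion $\subseteq$ is routine, since every $x \in \wh H^{\times}$ lies in each $\wh H_{\mathfrak q}^{\times}$ and $\psi(\wh H^{\times}/H^{\times}) \subseteq \psi(\mathsf q(H)/H^{\times}) = \Ker\pi$. For $\supseteq$, given $\xi \in \wh D^{\times} \cap \Ker\pi$ I would write $\xi = \psi(xH^{\times})$; membership in $\wh D^{\times}$ forces $x \in \wh H_{\mathfrak q}^{\times}$ for every $\mathfrak q \in \mathfrak X(H)$. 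Because $\wh H$ is Krull (Theorem \ref{5.5}.1) and the rank-one hypothesis makes the map $\mathfrak X(\wh H) \to \mathfrak X(H)$ a bijection (Lemma \ref{5.1}.3), vanishing of all these local valuations yields $x \in \wh H^{\times}$, whence $\xi \in \psi(\wh H^{\times}/H^{\times})$. With this identity in hand, (b) says $\wh D^{\times} \subseteq \Ker\pi$, which forces $\wh D^{\times} = \wh D^{\times} \cap \Ker \pi = \psi(\wh H^{\times}/H^{\times})$, i.e.\ (c); conversely (c) gives $\wh D^{\times} = \psi(\wh H^{\times}/H^{\times}) \subseteq \Ker\pi$, i.e.\ (b). This local-to-global detection of units is the step I expect to be the main obstacle.

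For (b)$\Leftrightarrow$(d) I would invoke Lemma \ref{3.5}.1: writing $\wh H_{\mathfrak p} = \wh H_{\mathfrak p}^{\times} \time [q_{\mathfrak p}]$, every atom of $H_{\mathfrak p}$ has the shape $\varepsilon q_{\mathfrak p}$ with $\varepsilon \in \wh H_{\mathfrak p}^{\times}$. Since $\mu_{\mathfrak p}$ is a group homomorphism, for $u, v \in \mathcal A(H_{\mathfrak p})$ the difference $(\pi \circ \mu_{\mathfrak p})(u) - (\pi \circ \mu_{\mathfrak p})(v)$ equals $\pi(\mu_{\mathfrak p}(uv^{-1}))$ with $uv^{-1} \in \wh H_{\mathfrak p}^{\times}$, and as $u, v$ range over $\mathcal A(H_{\mathfrak p})$ these elements $\mu_{\mathfrak p}(uv^{-1})$ sweep out exactly the $\mathfrak p$-component $\wh H_{\mathfrak p}^{\times}/H_{\mathfrak p}^{\times}$ of $\wh D^{\times}$. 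Thus (d) at a fixed $\mathfrak p$ is equivalent to $\pi$ vanishing on that component, and (d) for all $\mathfrak p$ is equivalent to $\pi \t \wh D^{\times} = 0$, that is, (b).

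Finally, for (d)$\Leftrightarrow$(e) I would use Proposition \ref{5.3}.3, by which $uH_{\mathfrak p} \cap H \in \mathcal I_v^*(H)$ with $(uH_{\mathfrak p} \cap H)_{\mathfrak p} = uH_{\mathfrak p}$ and $(uH_{\mathfrak p} \cap H)_{\mathfrak q} = H_{\mathfrak q}$ for $\mathfrak q \ne \mathfrak p$; consequently $\mathsf q(\delta_H)(uH_{\mathfrak p} \cap H) = \mu_{\mathfrak p}(u)$. Since $\mathsf q(\delta_H)$ carries the principal fractional ideals onto $\psi(\mathsf q(H)/H^{\times}) = \Ker\pi$ and induces $\overline\delta_H$, the class $(uH_{\mathfrak p} \cap H)\mathsf q(H) \in \mathcal C_v(H)$ corresponds under the isomorphism $\overline\delta_H$ to $\pi(\mu_{\mathfrak p}(u)) \in \mathcal C(H)$. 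Hence $(uH_{\mathfrak p} \cap H)\mathsf q(H) = (vH_{\mathfrak p} \cap H)\mathsf q(H)$ holds iff $(\pi \circ \mu_{\mathfrak p})(u) = (\pi \circ \mu_{\mathfrak p})(v)$, so (e) and (d) are literally the same statement transported across $\overline\delta_H$, completing the chain of equivalences.
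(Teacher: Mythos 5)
Your proof is correct and takes essentially the same route as the paper: the equivalences (a)$\Leftrightarrow$(b)$\Leftrightarrow$(c) come from the commutative square relating $\varepsilon$ to $\pi\t\wh D^\times$ via $\overline\delta_H$, (b)$\Leftrightarrow$(d) from the description $\mathcal A(H_{\mathfrak p})=\wh H_{\mathfrak p}^\times q_{\mathfrak p}$ of Lemma \ref{3.5}.1, and (d)$\Leftrightarrow$(e) from Proposition \ref{5.3}.3 together with the isomorphism $\overline\delta_H$. The only cosmetic difference is that you re-derive the identity $\psi(\wh H^\times/H^\times)=\wh D^\times\cap\Ker\pi$ (i.e.\ exactness at the middle term) directly from the Krull property of $\wh H$, whereas the paper simply reads it off the exact sequence of Proposition \ref{exactsequence}.
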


\begin{proof}
In the exact sequence
\[
1 \ \to \ \wh H^\times /H^\times \ \overset {\psi_0}\to \ \coprod_{\mathfrak p \in \mathfrak X(H)} \wh H_\mathfrak p^\times /H_\mathfrak p^\times \ \overset \varepsilon \to \ \mathcal C_v(H) \ \overset{\overline \vartheta} \to \ \mathcal C_v(\wh H) \ \to \ \boldsymbol 0
\]
the homomorphism $\varepsilon$ is given by the following commutative diagram with exact rows, where the vertical maps $j_0$ and $j$ are inclusions and $\psi_0=\psi\t \wh H^\times /H^\times$:
\[
\begin{CD}
1 @> >>  \wh H^\times /H^\times @> \psi_0>> \wh D^\times @> \varepsilon >> \mathcal C_v(H)\\
@.          @V j_0 VV              @V j  VV  @VV \overline{\delta}_H V @.\\
1 @> >>  \mathsf q( H)/H^\times @> \psi>>\mathsf q(D)  @> \pi >> \mathcal C(H)@>>> \boldsymbol 0\,.
\end{CD}
\]
Hence the equivalence of (a), \,(b) and (c) follows since we have:
\[
\overline \vartheta \ \text{ is an isomorphism } \ \iff \ \varepsilon =0 \ \iff \ \psi(\wh H^\times /H^\times) = \wh D^\times \ \iff \ \pi \t \wh D^\times =0.
\]

\smallskip

(b) \,$\Leftrightarrow$\, (d)\, Note that $\pi \t \wh D^\times =0$ if and only if $(\pi\circ\mu_{\mathfrak p})\t\wh H_\mathfrak p^\times =0$ for all $\mathfrak p \in \mathfrak X(H)$. If $\mathfrak p \in \mathfrak X(H)$ and $q$ is a prime element of $\wh H_\mathfrak p$, then $\mathcal A(H_\mathfrak p) = \wh H_\mathfrak p^\times q$, and therefore $(\pi\circ\mu_{\mathfrak p})\t \wh H_\mathfrak p^\times =0$ if and only if $\pi\circ\mu_{\mathfrak p}$ is constant on $\mathcal A(H_\mathfrak p)$.

\smallskip

(d) \,$\Leftrightarrow$\, (e)\, Observe that $\mathsf q(\partial_H)(\mathsf q(H)/H^{\times})=\{aH\mid a\in\mathsf q(H)\}=\mathsf q(\mathcal{H})$. If $\mathfrak p \in \mathfrak X(H)$ and $u \in H_\mathfrak p$, then $(\overline\delta_H^{-1}\circ\pi\circ\mu_{\mathfrak p})(u) = (uH_\mathfrak p\cap H) \mathsf q(\mathcal H)$ by Proposition \ref{5.3}.3. Let $\mathfrak p\in\mathfrak X(H)$ and $u,v\in\mathcal A(H_\mathfrak p)$. Since $\overline\delta_H$ is an isomorphism, we infer that $(\pi\circ\mu_{\mathfrak p})(u) = (\pi\circ\mu_{\mathfrak p})(v)$ if and only if $(uH_\mathfrak p\cap H) \mathsf q(\mathcal H)=(vH_\mathfrak p\cap H) \mathsf q(\mathcal H)$ if and only if $(uH_\mathfrak p\cap H) \mathsf q(H)=(vH_\mathfrak p\cap H) \mathsf q(H)$.
\end{proof}

\medskip
\begin{examples} \label{5.7}~

1. (Noetherian Domains) A noetherian domain is weakly Krull if and only if every prime ideal of depth one has height one. This holds true for Cohen-Macaulay domains which include  all one-dimensional noetherian domains. A monoid domain $R[H]$ is seminormal if and only if $R$ and $H$ are seminormal \cite[Theorem 4.76]{Br-Gu09a}.
We refer to \cite[Chapter 4G]{Br-Gu09a}, \cite{Br-He98, Br-Li-Ro06a, Li12a}   for a detailed study of class groups and to investigations when  such monoid domains are Cohen-Macaulay.
For weakly Krull domains obtained by pullback constructions see \cite{An-Ch-Pa06a}.

Let  $R$  be a one-dimensional noetherian
domain such that its integral closure $\overline R$  is a finitely generated $R$-module.
Then its conductor $\mathfrak f = (R \DP \overline R)$ is nonzero (i.e.,  $R$  is an
order in the Dedekind domain  $\overline R$), the $v$-class group $\mathcal C_v (R)$ coincides with the Picard group $\Pic(R) $,
and the  exact sequence of Proposition \ref{exactsequence} has the form
\[
1 \to \overline R^\times /R^\times \to
(\overline R / \mathfrak f)^\times \big/ (R/\mathfrak f)^\times \to
\text{Pic}(R) \to \mathcal \Pic (\overline R) \to 1 \ .
\]
Hence, if  $\overline R $  has finite class group
and all proper residue class rings are finite, then  $\Pic (R) $   is finite. This holds true for orders in algebraic number fields and for orders in holomorphy rings of algebraic function fields. In these cases every class $g \in \Pic (R)$ contains infinitely many prime ideals.

\smallskip
2. (Congruence monoids in Dedekind domains) Let $R$ be a Dedekind domain, $\{0\} \ne \mathfrak f \ne R$ an ideal such that $R/\mathfrak f$ is finite,
$\sigma$ a sign vector of $R$, $\emptyset\ne\Gamma\subset
R/\mathfrak f \sigma$ a multiplicatively closed subset and
$H_\Gamma=\{a\in R^{\bullet}\mid [a]_{\mathfrak{f}\sigma}\in\Gamma\}\cup\{1\}\subset R^\bullet$ \ the congruence monoid defined in
$R$ modulo $\mathfrak{f}\sigma$ by $\Gamma$. By \cite[Theorem 2.11.11]{Ge-HK06a}, $H_{\Gamma}$ is a $v$-noetherian monoid with $(H_{\Gamma}\DP\wh{H_{\Gamma}})\ne\emptyset$. In many situations, $H_{\Gamma}$ is a weakly Krull monoid. We provide some simple examples. If $\mathfrak o$ is an order in $R$, then $\mathfrak o^{\bullet}$ is a congruence monoid in $R$, and being a one-dimensional noetherian domain, it is weakly Krull by 1. If $H$ is a congruence monoid in $R$,   $\mathfrak f$  the largest ideal of definition for $H$, and  $aR + \mathfrak f = R$ for all $a \in H$, then $H$ is Krull (\cite[Proposition 2.11.6]{Ge-HK06a}).   Explicit arithmetical results have been derived for non-regular congruence monoids in the integers (\cite{B-C-C-M07, Ba-Ch-Sc08a, B-C-C-M09,Ch-St10a, Ba-Ch13a}).

However, $H_{\Gamma}$ is not always a weakly Krull monoid (not even if it is seminormal). To provide an example,
consider the monoid $H = p\Z^{\bullet} \cup \{1\}$ for some prime $p \in \Z$. Obviously, $H$ is seminormal and $\widehat H = \Z^{\bullet}$. For every prime $q \in \Z$, we have $q \Z^{\bullet} \cap H \in v$-$\spec (H)$, and if $q \ne p$, then $q\Z^{\bullet} \cap H \subsetneq H \setminus \{1\} = p \Z^{\bullet} \cap H$. Thus $H$ is not weakly Krull since $v$-$\max(H)\not=\mathfrak{X}(H)$.

\smallskip
3. (Weakly factorial monoids) As already mentioned, every coproduct of a family of primary monoids is weakly factorial. By \cite[Exercise 22.5]{HK98}, a (not necessarily $v$-noetherian) weakly Krull monoid is weakly factorial if and only if it has trivial $t$-class group. In particular, a one-dimensional domain is weakly factorial if and only if it has trivial Picard group.  Generalized Cohen-Kaplansky domains and  weak Cohen-Kaplansky domains are weakly factorial  (\cite[Theorem 4]{An-An-Za92b}, \cite[Proposition 8]{Ch-An12a}, and \cite{Co-Sp12a}). Weakly factorial semigroup rings were studied by Chang \cite{Ch09a}. For characterizations and  generalizations of weakly factorial domains (which are all weakly Krull) see \cite{An-Za90, An-Ch-Pa03a}.

\smallskip
4. (Value semigroups) Value semigroups of one-dimensional semilocal rings reflect ring-theoretical properties (such as being Gorenstein or being Arf). In \cite{Ba-An-Fr00a}, Barucci, D'Anna, and Fr{\"o}berg study so-called good semigroups as purely semigroup theoretical models for value semigroups. Each good semigroup is a finite direct product of good local semigroups (\cite[Theorem 2.5]{Ba-An-Fr00a}), and a good local semigroup is a finitely  primary monoid with complete integral closure being equal to $(\N_0^s, +)$ for some $s \ge 1$. Thus all good semigroups are weakly factorial.

\smallskip
5. (Ideal semigroups) If $H$ is a $v$-noetherian weakly Krull monoid, then its monoid $\mathcal I_v^* (H)$ of $v$-invertible $v$-ideals is $v$-noetherian weakly factorial (see Theorem \ref{5.5}.2).

\smallskip
6. (Seminormality) The seminormalization $H'$ and the root closure $\widetilde H$ of any weakly Krull monoid $H$ are seminormal weakly Krull monoids, and if $\wh H$ is a Krull monoid, then $H'$ and $\widetilde H$ are both $v$-noetherian (\cite[Theorem 4.1]{Re13a}).
\end{examples}

\smallskip
7. (Seminormal quadratic orders) Seminormal orders in algebraic number fields have been studied by Dobbs and Fontana \cite{Do-Fo87}, who (among others) gave a full characterization of seminormal orders in quadratic number fields.

Let $\Delta \in \Z$ be a quadratic fundamental discriminant (that means, either $\Delta \equiv 1 \mmod 4$ is squarefree, or $\Delta = 4D$ for some squarefree $D \in \Z$ such that $D \not \equiv 1 \mmod 4$). Let $\sigma \in \{0,1\}$ be such that $\sigma \equiv \Delta \mmod 2$, and set
\[
\omega_\Delta = \frac{\sigma + \sqrt \Delta}2.
\]
Then $\Z[\omega_\Delta]$ is the ring of integers of the quadratic number field $\Q( \sqrt \Delta\,)$. If $f \in \N$, then $\Z[f\omega_\Delta]$ is the only order of index $f$ in $\Z[\omega_\Delta]$,
\[
\bigl|{\rm Pic}\bigl(\Z[f\omega_\Delta]\bigr)\bigr| = \bigl|{\rm Pic}\bigl(\Z[\omega_\Delta]\bigr)\bigr|\,\frac f {(\Z[\omega_\Delta]^\times \DP \Z[f\omega_\Delta]^\times)}\,\prod_{p \t f} \Bigl[ 1 - \frac 1 p \Bigl(\frac \Delta p \Bigr)\Bigr],
\]
and every class of \,${\rm Pic}(R) = \mathcal C_v(R^\bullet)$ contains infinitely many prime ideals (see \cite[Theorems 5.1.7, 5.9.7 and 8.2.7]{HK13a}). By \cite[Corollary 4.5]{Do-Fo87}, \,$\Z[f \omega_\Delta]$ is seminormal if and only if $f$ is squarefree and $\gcd (\Delta ,f)=1$. In this case,
\[
\frac{\bigl|{\rm Pic}\bigl(\Z[f\omega_\Delta]\bigr)\bigr|}{\bigl|{\rm Pic}\bigl(\Z[\omega_\Delta]\bigr)\bigr|} \, = \,\frac 1 {(\Z[\omega_\Delta]^\times \DP \Z[f\omega_\Delta]^\times)}\,\prod_{p \t f} \Bigl[ p - \Bigl(\frac \Delta p \Bigr)\Bigr],
\]
In particular, if $\Delta \equiv 1 \mmod 4$, then $\Z[\sqrt \Delta\,] = \Z[2\omega_\Delta]$ is seminormal, and the homomorphism \newline $\overline \vartheta \colon {\rm Pic}(\Z[\sqrt \Delta\,]) \to  {\rm Pic}(\Z[\omega_\Delta])$ of Theorem \ref{5.5} is an isomorphism if and only if either $\Delta \equiv 1 \mmod 8$, or $\Delta \equiv 5 \mmod 8$ and \,$\Z[\sqrt \Delta\,]^\times \subsetneq \Z[\omega_\Delta]^\times$. In the latter case, we have \,$(Z[\omega_\Delta]^\times \DP \Z[\sqrt \Delta\,]^\times) = 3$\, (see \cite[Theorem 5.2.3]{HK13a}).

\medskip
Our main arithmetical result, Theorem \ref{5.6}, considers unions of sets of lengths and (monotone) catenary degrees. Moreover, it shows the existence of a certain transfer homomorphism. In order to discuss its significance, let us fix an atomic monoid $H$. By the basic inequality (\ref{basic3}), we have $2 + \sup \Delta (H) \le \mathsf c (H)$. In particular, $\mathsf c (H) = 2$ implies that $H$ is half-factorial, and $\mathsf c (H) = 3$ implies that all sets of lengths are intervals.

Unions of sets of lengths were introduced by Chapman and Smith in \cite{Ch-Sm90a}. In particular, their suprema -- these are the $\rho_k (H)$-invariants  -- and the elasticity --- this is the limit of the $\rho_k (H)/k$ --- have received much attention since the early days of factorization theory.
Under very mild finiteness conditions, there exist constants $M$ and $k^*$ such that, for all $k \ge k^*$, all unions $\mathcal U_k (H)$ are almost arithmetical progressions with  bound $M$ (\cite[Theorem 4.2]{Ga-Ge09b}).
Much research has been motivated by the question (\cite[Problem 37]{Ch-Gl00a}) of whether, in case of Krull monoids with finite class group where every class contains a prime divisor, unions of sets of lengths are intervals. This question was answered in the affirmative in \cite{Fr-Ge08}. For weakly Krull monoids (or, more generally for {\rm C}-monoids) there are simple characterizations which guarantee that the unions are finite, but their precise structure has not been determined yet.

The first finiteness result for monotone catenary degrees is due to Foroutan where it was established for certain noetherian weakly Krull domains (\cite[Theorem 5.2]{Fo06a}). The finiteness of the monotone catenary degree seems to be a rare phenomenon (inside the class of objects having finite catenary degree), and explicit results are very sparse, even for Krull monoids with finite class groups (see \cite{Fo-Ge05, Fo-Ha06b, Ge-Gr-Sc-Sc10, Ge-Yu13a}). In \ref{5.12}, we provide an example showing that  the monotone catenary degree can be infinite when we drop the seminormality condition.

Under the assumptions of Theorem \ref{5.6}.2.(a).(ii), there is a transfer homomorphism  $H \to \mathcal B (G)$. Note that $\mathcal B (G)$ is a commutative Krull monoid with class group isomorphic to $G$, and it is well-known that there is a transfer homomorphism  $\widehat H \to \mathcal B (G)$. But it is surprising to have  a transfer homomorphism  from a non-Krull monoid $H$ to a Krull monoid (very recently, similarly surprising transfer homomorphisms were established from certain non-commutative Krull monoids onto  monoids of zero-sum sequences over  finite abelian groups (\cite{Sm13a})). The existence of this transfer homomorphism implies that all of the precise arithmetical results that have been established for $\mathcal B (G)$ (see \cite[Chapter 6]{Ge-HK06a} and \cite{Ge09a} for surveys) hold also for $H$, and that all arithmetical invariants coincide with those for $\widehat H$.

\medskip
\begin{theorem}[\bf{Arithmetic structure}] \label{5.6}
Let $H$ be a seminormal $v$-noetherian weakly Krull monoid with $\emptyset \ne \mathfrak f = (H \DP \wh H) \subsetneq H$ such that $H_{\mathfrak p}$ is finitely primary of rank $s_{\mathfrak p}$ for each $\mathfrak{p}\in\mathfrak{X}(H)$. Let $\pi \colon \mathfrak X(\wh H) \to \mathfrak X(H)$ be the natural map defined by \,$\pi(\mathfrak P) = \mathfrak P \cap H$ for all $\mathfrak P \in \mathfrak X(\wh H)$, and let \,$\overline \vartheta \colon \mathcal C_v(H) \to \mathcal C_v(\wh H)$ be the epimorphism given by {\rm Proposition \ref{exactsequence}}.
\begin{enumerate}
\item
      \begin{enumerate}
      \smallskip
      \item If  $\pi$ is bijective, then $\mathsf c \big(\mathcal I_v^* (H) \big) = 2$.

      \smallskip
      \item Suppose  $\pi$ is not bijective.
            Then
            $\mathcal U_k \big( \mathcal I_v^* (H) \big) = \N_{\ge 2}$ for all $k \ge 2$,   $\Delta \big(
            \mathcal I_v^* (H) \big) = \{1\}$, and $\mathsf c \big( \mathcal I_v^* (H) \big) = \mathsf c_{\adj} \big( \mathcal I_v^* (H) \big) = 3$.
            If there is precisely one  $\mathfrak p \in \mathfrak X (H)$ with $s_{\mathfrak p} > 1$, then
            $\mathsf c_{\monn} \big( \mathcal I_v^* (H) \big) =3$, and otherwise $\mathsf c_{\monn} \big( \mathcal I_v^* (H) \big)
            =5$.
      \end{enumerate}

\smallskip
\item Suppose that  $G = \mathcal C_v (H)$ is finite, and that every
      class contains a  $\mathfrak p \in \mathfrak X (H)$ with $\mathfrak p \not\supset \mathfrak f$.
      \begin{enumerate}
      \smallskip
      \item Suppose that $\pi$ is bijective.
            \begin{enumerate}
            \smallskip
            \item $\mathcal U_k (H)$ is a finite interval for all $k \ge 2$.

            \smallskip
            \item Suppose that $\overline{\vartheta}\colon \mathcal{C}_v(H)\rightarrow \mathcal{C}_v(\wh{H})$ is an isomorphism.
                  Then  there is a transfer homomorphism $\theta \colon H \to \mathcal B ( G )$ such that $\mathsf{c}(H,\theta)\leq 2$. Thus
                  $\mathcal L (H) = \mathcal L \big( \mathcal B(G) \big)$, and if $|G| \ge 3$, then $\mathsf c (H) = \mathsf c \big( \mathcal B(G) \big)$ and $\mathsf c_{\monn} (H) = \mathsf c_{\monn} \big( \mathcal B(G) \big)$.
            \end{enumerate}

      \smallskip
      \item Suppose  that $\pi$ is not bijective. Then,
            for all $\ell \ge 2$ and $k\in [2,3]$, we have $\ell \in \mathcal U_k (H)$ or $\ell+1 \in \mathcal U_k (H)$ and, for all $k\geq 4$, we have $\N_{\ge 4} \subset \mathcal U_k (H)\subset\N_{\ge 2}$.
      \end{enumerate}
\end{enumerate}
\end{theorem}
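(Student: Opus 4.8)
The plan is to reduce everything to objects already understood in Sections 3 and 4 via the structure theorem and transfer homomorphisms. By Theorem \ref{5.5}.2 and Proposition \ref{5.3}.4, $H_\red$ is a saturated cofinal submonoid of $D := \mathcal I_v^*(H) \cong \mathcal F(\mathcal P) \time T$, where $T = \coprod_{\mathfrak p \in \mathcal P^*}(H_\mathfrak p)_\red$ and $\mathcal C(H_\red, D) = \mathcal C_v(H) = G$. Since every class contains some $\mathfrak p \in \mathcal P$, we have $G_P = G$, so Lemma \ref{4.2} supplies a transfer homomorphism $\boldsymbol\beta \colon H \to \mathcal B(G,T,\iota)$ with $\mathsf c(H,\boldsymbol\beta) \le 2$; by Lemma \ref{4.1} this preserves $\mathcal L$ (hence every $\mathcal U_k$) and governs $\mathsf c$, $\mathsf c_{\monn}$ up to the fiber term $2$. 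Throughout, Lemma \ref{3.5} identifies each factor $(H_\mathfrak p)_\red$ as bifurcus when $s_\mathfrak p \ge 2$ and as half-factorial of catenary degree $2$ when $s_\mathfrak p = 1$ (a rank-one factor with $\mathfrak p \in \mathcal P^*$ has proper localized conductor, so it is not completely integrally closed, hence not factorial), and $\pi$ is bijective exactly when all $s_\mathfrak p = 1$ by Lemma \ref{5.1}.3.

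For Part 1, I would apply Theorem \ref{3.7} to the coproduct $\mathcal I_v^*(H)$ with $J = \{\mathfrak p \in \mathfrak X(H) : s_\mathfrak p > 1\}$. If $\pi$ is bijective then $J = \emptyset$, and Theorem \ref{3.7}.1 gives $\mathsf c \le 2$; equality follows since $\mathcal P^* \ne \emptyset$ contributes a non-factorial half-factorial factor of catenary degree $2$, proving (a). If $\pi$ is not bijective then $J \ne \emptyset$, and Theorem \ref{3.7}.2(a) yields the assertions on $\mathcal U_k$, $\Delta$, $\mathsf c$ and $\mathsf c_{\adj}$; for $\mathsf c_{\monn}$, Theorem \ref{3.7}.2(b) handles the case $|J| = 1$ (the unique bifurcus factor being seminormal finitely primary), while for $|J| \ge 2$ Theorem \ref{3.7}.2(c) combined with $\mathsf c_{\eq}((H_\mathfrak p)_\red) \le 2$ (Lemma \ref{3.5}.3) forces $\mathsf c_{\monn} = 5$.

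For Part 2(a)(ii), assume $\pi$ bijective and $\overline\vartheta$ an isomorphism. By Corollary \ref{L:CharClsIso} the latter is equivalent to $\wh D^\times = \psi(\wh H^\times / H^\times)$, which is precisely the hypothesis $\overline D^\times = \overline H^\times D^\times$ of Proposition \ref{L:GlobalTransfer} taken with $\overline H = \wh H_\red$ inside $\overline D = \mathcal F(\mathcal P) \time \coprod_{\mathfrak p \in \mathcal P^*}\wh{(H_\mathfrak p)_\red}$, and it forces $H_\red = D \cap \wh H_\red$ because $\Ker \overline\vartheta$ is then trivial. Since each $(H_\mathfrak p)_\red$ with $\mathfrak p \in \mathcal P^*$ is seminormal finitely primary of rank one, Proposition \ref{L:GlobalTransfer} gives a transfer homomorphism $H \to \wh H$ with fiber catenary degree $\le 2$. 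As $\wh H$ is Krull with $\mathcal C_v(\wh H) \cong G$ and (via $\overline\vartheta$) every class containing a prime, Lemma \ref{4.2} provides the standard transfer $\wh H \to \mathcal B(G)$ with fiber degree $\le 2$; composing through Lemma \ref{4.1}.5 produces $\theta \colon H \to \mathcal B(G)$ with $\mathsf c(H,\theta) \le 2$. Then $\mathcal L(H) = \mathcal L(\mathcal B(G))$ by Lemma \ref{4.1}.1, and since $\mathsf c(\mathcal B(G)) \ge 3$ whenever $|G| \ge 3$, the catenary and monotone catenary equalities follow from Lemma \ref{4.1}.2.

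The remaining statements 2(a)(i) and 2(b) I would prove by working directly inside $\mathcal B(G,T,\iota)$, using $\mathcal U_k(H) = \mathcal U_k(\mathcal B(G,T,\iota))$. When $\pi$ is bijective, $T$ is half-factorial, and I would adapt the interval argument of \cite{Fr-Ge08} for $\mathcal B(G)$: one uses $G_P = G$ to increment lengths on the $\mathcal F(G)$-part, while the half-factorial $T$-part always contributes the same number of atoms, so each $\mathcal U_k(H)$ comes out a finite interval. When $\pi$ is not bijective, $T$ contains a bifurcus factor $(H_{\mathfrak p_0})_\red$ with $\mathcal U_m = \N_{\ge 2}$; balancing bifurcus elements that realize lengths $\{2,\cdot\}$ against primes from $\mathcal P$ (available as $G_P = G$), I would realize $\N_{\ge 4} \subset \mathcal U_4(H)$ and then propagate to all $k \ge 4$ via Lemma \ref{2.1}, the rigidity of length-$2$ and length-$3$ factorizations leaving only the dichotomy $\ell \in \mathcal U_k(H)$ or $\ell+1 \in \mathcal U_k(H)$. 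The hard part will be exactly this last combinatorial analysis, controlling the interaction of the local (bifurcus or half-factorial) $T$-part with the global class-group part of $\mathcal B(G,T,\iota)$, since here—unlike in 2(a)(ii)—no length-preserving transfer onto $\mathcal B(G)$ is available and the interval and the dichotomy must be established by an explicit construction.
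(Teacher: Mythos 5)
Your overall architecture coincides with the paper's: the reduction via Theorem \ref{5.5} and Lemma \ref{4.2} to the $T$-block monoid $B=\mathcal B(G,T,\iota)\subset\mathcal F(G)\time T$, the use of Lemma \ref{3.5} and Theorem \ref{3.7} for part 1 (including the observation that a rank-one factor indexed by $\mathcal P^*$ is non-factorial, which gives the equality $\mathsf c(\mathcal I_v^*(H))=2$), and the chain $H\hookrightarrow\wh H\to\mathcal B(G)$ of transfer homomorphisms obtained from Corollary \ref{L:CharClsIso}, Proposition \ref{L:GlobalTransfer}, Lemma \ref{4.2} and Lemma \ref{4.1}.5 for part 2.(a)(ii) are exactly what the paper does; those parts of your proposal are essentially complete (the identity $H=D\cap\wh H$ follows already from saturation of $H\subset D$ and $\wh H\subset\mathsf q(H)$, not from triviality of $\Ker\overline\vartheta$, and you still owe the short verification that every class of $\mathcal C_v(\wh H)$ contains a prime of $\mathfrak X(\wh H)$, which the paper deduces from $\overline\vartheta([\mathfrak p])\ni\mathfrak p_{v(\wh H)}$ for $\mathfrak p\in\mathcal P$).

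The genuine gap is in 2.(a)(i) and 2.(b), where you defer precisely the content that makes the theorem nontrivial. For 2.(a)(i), the sketch ``increment lengths on the $\mathcal F(G)$-part while the half-factorial $T$-part always contributes the same number of atoms'' does not go through as stated: atoms of $B$ are in general mixed products $g_1\cdot\ldots\cdot g_k a_1\cdot\ldots\cdot a_n$ with $a_i\in D_i$, so $B$-lengths are not governed by the two parts separately and the argument for $\mathcal B(G)$ from \cite{Fr-Ge08} does not simply transplant. The paper's proof requires an additional idea: taking $\ell>k$ minimal with $[\ell,\rho_k(B)]\subset\mathcal U_k(B)$ and letting $\Omega$ be the set of elements realizing $\{k,j\}$ with $j\ge\ell$, one first proves (assertion {\bf A2}) that $\Omega\cap\mathcal B(G)\ne\emptyset$ by an induction on the $T$-support, repeatedly replacing a common $T$-atom divisor of some $U_\lambda$ and some $V_\mu$ by its class in $G$ (assertion {\bf A1}), with a separate case analysis when no such common divisor exists; only then does a minimal-counterexample argument inside $\mathcal B(G)$ close the proof. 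For 2.(b), the dichotomy for $k\in[2,3]$ and the inclusion $\N_{\ge4}\subset\mathcal U_4(B)$ rest on two explicit constructions: {\bf A3} (for $1\ne a\in D_1\cap B$, either $2\in\mathsf L_B(a)$ or $2\in\mathsf L_B(au)$ for some atom $u$, obtained by splitting $\epsilon q_1^{m_1}\cdots q_s^{m_s}$ and correcting classes with elements of $G$) and {\bf A4} (an element $c$ with $\{2,3\}\subset\mathsf L_B(c)$). Your sketch captures the spirit of {\bf A3}, but {\bf A4} is immediate only for $|G|\ge3$; for $|G|=2$ it needs a nontrivial case distinction on whether $q_1\cdots q_s\in B$ and on the parity of $|\{i\mid [q_i]=g\}|$, and the case $|G|=1$ has to be routed back through Theorem \ref{3.7}. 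These small-class-group cases are exactly where the statement is delicate, and they are absent from the proposal.
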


\begin{proof}
We set $\mathcal P^* = \{ \mathfrak p \in \mathfrak X(H) \mid \mathfrak p \supset \mathfrak f \}$ and $\mathcal P = \mathfrak X (H) \setminus \mathcal P^*$.
By Theorem \ref{5.5}, there exists an isomorphism \,$\chi \colon \mathcal I_v^*(H) \to D = \mathcal F(\mathcal P) \time D_1 \time \ldots \time D_n$, where $n = |\mathcal P^*| \in \N$, \ $D_1, \ldots, D_n$ are reduced seminormal finitely primary monoids, and $\chi \t \mathcal P = \id_{\mathcal P}$. Hence \,$\chi \circ \partial_H \colon H_\red \to \mathcal H = \{aH
\mid a \in H \} \hookrightarrow \mathcal I_v^*(H) \to D$\, induces an isomorphism \,$H_\red \to H^*$ onto a cofinal and saturated submonoid $H^*$ of $D$, and there is a natural isomorphism \,$\overline \chi \colon \mathcal C_v(H)= \mathcal C(\mathcal H, \mathcal I_v^*(H)) \to G = \mathcal C(H^*,D)$ mapping classes of primes onto classes of primes (use Lemma \ref{4.0}). Thus we may assume from now on that $H = H^* \subset D$ is a cofinal saturated submonoid with class group $G = \mathcal C(H,D)$. By Lemma \ref{5.1}.3, $\pi$ is bijective if and only if $D_1, \ldots, D_n$ are all of rank $1$.

\smallskip
1.  All  assertions follow from the structural description of $\mathcal I_v^* (H)$, from Lemma \ref{3.5}, and from Theorem \ref{3.7}.

\smallskip
2. By Lemma \ref{4.2}, it suffices to prove the assertions 2.(a)(i) and 2.(b) for the monoid $B = \mathcal B (G, T, \iota) \subset F = \mathcal F (G) \time T$, where $T= D_1 \time \ldots \time D_n$. Since $G$ is finite, $H \subset D$ is cofinal, and hence $B \subset F$ is saturated, cofinal, and $G=\mathcal C (B,F)$. In particular, for each $a \in F$ we have $a \in B$ if and only if $[a] = 0 \in G$ and hence $a^{\exp (G)} \in B$. We will often use Lemma \ref{3.3}.1.

\smallskip
2.(a)(i) For $i \in [1,n]$, we  set $D_i \subset \wh{D_i} = \wh{D_i}^{\times} \time [q_i]$, and by Lemma \ref{3.5} we have $\mathcal A (D_i) = \{\epsilon q_i \mid \epsilon \in \wh{D_i}^{\times} \}$. Every $S \in F$ has a unique product decomposition of the form
\[
S = g_1 \cdot \ldots \cdot g_k a_1 \cdot \ldots \cdot a_n \,,
\]
where $k \in \N_0$, $g_1, \ldots, g_k \in G$, and $a_i \in D_i$ for all $i \in [1,n]$, and
\[
\mathsf Z_F (S) = g_1 \cdot \ldots \cdot g_k \prod_{i=1}^n \mathsf Z_F (a_i) \,.
\]
Furthermore,
\[
\mathcal A (F) = G \cup \bigcup_{i=1}^n \mathcal A (D_i) \,,
\]
and since $D_1, \ldots, D_n$ have catenary degree at most two by Lemma \ref{3.5}.2,  $F$ is half-factorial by Theorem \ref{3.7}.1. All this information will be used in our divisibility arguments below without further mention.

Let  $S = u_1 \cdot \ldots \cdot u_{\ell} \in F$, where $\ell \in \N_0$ and $u_1, \ldots, u_{\ell} \in \mathcal A (F)$. Then   $\mathsf L_F (S) = \{\ell\}$, and (for the rest of this proof) we set $|S| = \ell$.  We continue with the following assertion.

\begin{enumerate}
\item[{\bf A1.}\,] Let $S = u_1 \cdot \ldots \cdot u_{\ell} \in \mathcal A (B)$, where $\ell \in \N_0$ and $u_1, \ldots, u_{\ell} \in \mathcal A (F)$,  let $k \in [1, \ell]$,  and $g \in G$ with $g = [u_1 \cdot \ldots \cdot u_k] \in G$. Then $S' = g u_{k+1} \cdot \ldots \cdot u_{\ell} \in \mathcal A (B)$ and $|S'| = |S|-k+1$.

\end{enumerate}

\smallskip

{\it Proof of \,{\bf A1}}.\, Since $[S'] = [S] = 0 \in G$, it follows that $S' \in B$, and by definition we have  $|S'| = |S|-k+1$.
Assume to the contrary that $S'$ is not an atom of $B$, whence $S' = g u_{k+1} \cdot \ldots \cdot u_{\ell} = S_1 S_2$ where $S_1, S_2 \in B \setminus \{1\}$. Without restriction we may suppose that $g \mid_F S_1$. This implies that $S_2 \mid_F u_{k+1} \cdot \ldots \cdot u_{\ell} \mid_F S$, and hence $S_2 \mid_B S$, a contradiction. \qed ({\bf A1)}

\smallskip
Since $G$ is finite, it follows that the Davenport constant $\mathsf D (G)$ of $G$ is finite (see \cite[Section 5.1]{Ge-HK06a}). Then  \cite[Proposition 3.4.5]{Ge-HK06a} implies that  $\sup \mathsf L_F (u) \le \mathsf D (G)$ for all $u \in \mathcal A (B)$ and that $\mathsf \rho_k (B) \le \rho_{k \mathsf D (G)} (F) = k \mathsf D (G) < \infty$ for all $k \in \N$. Thus all sets $\mathcal U_k (B)$ are finite.

First we assert that it is sufficient to prove that $[k, \rho_k (B)] \subset \mathcal U_k (B)$ for each $k \in \N$. Suppose  this holds and let $k \in \N$.  If $\ell \in [\lambda_k (B), k]$, then $k \in [\ell, \rho_{\ell} (B)]$, hence $k \in \mathcal U_{\ell} (B)$, and thus $\ell \in \mathcal U_k (B)$.

Now let $k \in \N$, and let $\ell \in [k, \rho_k (B)]$ be minimal such that $[\ell, \rho_k (B)] \subset \mathcal U_k (B)$. We have to show that $\ell = k$. Assume to the contrary that $\ell > k$.
We denote by $\Omega$ the set of all $A \in B$ such that $\{k, j\} \subset \mathsf L_B (A)$ for some $j \ge \ell$, and continue with the following assertion.

\begin{enumerate}
\item[{\bf A2.}\,] $\Omega \cap \mathcal B (G) \ne \emptyset$.
\end{enumerate}

\smallskip
{\it Proof of} \,{\bf A2}.\, Let $A \in  \Omega \setminus \mathcal B (G)$ and $U_1, \ldots, U_k, V_1, \ldots, V_j \in \mathcal A (B)$ such that $A = U_1 \cdot \ldots \cdot U_k = V_1 \cdot \ldots \cdot V_j$. We set $A = A_1A_2$, where $A_1 \in \mathcal F (G)$ and $A_2 \in T$. We show that there is an $A' = A_1'A_2' \in \Omega$, where $A_1' \in \mathcal F (G)$, $A_2' \in T$, such that $|A_2'| < |A_2|$. Then the assertion follows by an inductive argument. We start with a first reduction step.

\noindent
{\it Reduction Step.} Suppose there are $i \in [1,n]$, $u \in \mathcal A (D_i)$, $\lambda \in [1,k]$, and $\mu \in [1,j]$ such that $u \mid_F U_{\lambda}$ and $u \mid_F V_{\mu}$.

After renumbering if necessary, we may suppose that $\lambda = \mu = 1$. We set $g = [u] \in G$, $U_1' = g u^{-1} U_1$, $V_1' = g u^{-1} V_1$, and $A' = g u^{-1} A$. Then $U_1', V_1' \in \mathcal A (B)$ by ({\bf A1}), and $A' = U_1' U_2 \cdot \ldots \cdot U_k = V_1'V_2 \cdot \ldots \cdot V_j \in \Omega$ has the required property.

 Since $A \notin \mathcal B (G)$,  there is an $i \in [1,n]$, such that $q_i \mid_{\wh F} A$, say $i =1$. If $q_1^2 \nmid_{\wh F} A$, then there are $\epsilon \in \wh{D_1}^{\times}$, $\lambda \in [1,k]$, and $\mu \in [1,j]$ such that $(\epsilon q_1) \mid_F U_{\lambda}$ and $(\epsilon q_1) \mid_F V_{\mu}$. Since $\epsilon q_1 \in \mathcal A (D_1)$, the assumption of the {\it Reduction Step} is satisfied, and we are done. From now on we suppose that $q_1^2 \mid_{\wh F} A$, and we distinguish two cases.

\smallskip
\noindent CASE 1: \ There is a $\lambda \in [1,k]$ such that $q_1^2 \mid_{\wh F} U_{\lambda}$ or there is a $\mu \in [1,j]$ such that $q_1^2 \mid_{\wh F} V_{\mu}$.

First suppose that $q_1^2 \mid_{\wh F} U_{\lambda}$. Then there is a $\mu \in [1,j]$ such that $(\epsilon q_1) \mid_F V_{\mu}$ for some $\epsilon \in \wh{D_1}^{\times}$. Since $u = \epsilon q_1 \in \mathcal A (D_1)$ and $u \mid_F U_{\lambda}$, the assumption of the {\it Reduction Step} is satisfied, and we are done. If $q_1^2 \mid_{\wh F} V_{\mu}$, then the argument is similar.

\smallskip
\noindent CASE 2: For all $W \in \{U_1, \ldots, U_k, V_1, \ldots, V_j\}$, we have $q_1^2 \nmid_{\wh F} W$.

Since $q_1^2 \mid_{\wh F} A$, we may suppose, after renumbering if necessary, that $(\epsilon_1 q_1) \mid_F V_1$, $(\epsilon_2 q_1) \mid_F V_2$, and $(\eta q_1) \mid_F U_1$ where $\epsilon_1, \epsilon_2, \eta \in \wh{D_1}^{\times}$.
We set $g = [\eta q_1] \in G$. Then $V_1' = (\eta q_1)^{-1}g V_1 V_2 \in B$, $U_1' = (\eta q_1)^{-1} g U_1 \in \mathcal A (B)$ by ({\bf A1}), and $U_1' U_2 \cdot \ldots \cdot U_k = V_1'V_3 \cdot \ldots \cdot V_j$.
If $V_1' \in \mathcal A (B)$ and $j-1 = \ell-1$, then $\ell-1 \in \mathcal U_k (B)$, a contradiction. Otherwise $V_1'V_3 \cdot \ldots \cdot V_j$ gives rise to a factorization of length greater than or equal to $\ell$ and hence $A' = U_1'U_2 \cdot \ldots \cdot U_k \in \Omega$. If $A' = A_1'A_2'$ with $A_1' \in \mathcal F (G)$ and $A_2' \in T$, then $|A_2'| < |A_2|$, and we are done. \qed ({\bf A2)}

\smallskip
Now we choose some $A \in \Omega \cap \mathcal B (G)$ such that $|A|$ is minimal.
Then there are $U_1, \ldots, U_k, V_1, \ldots, V_j \in \mathcal A (B)$ such that $A = U_1 \cdot \ldots \cdot U_k = V_1 \cdot \ldots \cdot V_j$.

Assume to the contrary that $|U_1| = \ldots = |U_k| = 1$. Then $U_1, \ldots, U_k$ are atoms in $F$. Since $F$ is half-factorial,   we infer that
\[
j \le \max \mathsf L_B (A) \le \max \mathsf L_F (A) = k < \ell \,,
\]
a contradiction.

Therefore, there exists a $\lambda \in [1,k]$ such that $|U_{\lambda}| \ge 2$, say $U_k = g h U'$ where $g, h \in G$ and $U' \in F$.
After renumbering if necessary we obtain that $V_{j-1}V_j = g h V'$ for some $V' \in F$. We set  $U_k' = (g+h) U'$ and $V_{j-1}' = (g+h) V'$. Then $U_k' \in \mathcal A (B)$ and $V_{j-1}' \in \mathcal B (G)$, say $V_{j-1}' = W_1 \cdot \ldots \cdot W_t$ where $t \in \N$ and $W_1, \ldots, W_t \in \mathcal A (G)$. If $A' = U_1 \cdot \ldots \cdot U_{k-1} U_k'$, then $|A'| < |A|$ and $A' = V_1 \cdot \ldots \cdot V_{j-2}W_1 \cdot \ldots \cdot W_t$. By the minimality of $|A|$ it follows that $j-2+t < \ell$, hence $t=1$, $j = \ell$, and $\ell-1 \in \mathcal U_k (B)$, a contradiction.

\smallskip
2.(a)(ii) Since $H\subset D$ is saturated, $\wh{H}\subset \wh{D}=\mathcal{F}(\mathcal P)\times
\wh{D}_1\times\ldots \times \wh{D}_n$ is saturated too (\cite[Lemma 3.3.1]{Ge-Ha08b}). From $H\subset
D\cap\wh{H}\subset D\cap \mathsf{q}(H)=H$ we obtain $H=D\cap \wh{H}$. From Corollary \ref{L:CharClsIso}
we obtain $\wh{D}^\times=\wh{H}^\times D^\times$. Applying Proposition \ref{L:GlobalTransfer} we
conclude that the inclusion $\iota\colon H\rightarrow \wh{H}$ is a transfer homomorphism with
$\mathsf{c}(H,\iota)\leq 2$.

We assert that every class of $\mathcal C_v(\wh H)$ contains some $\mathfrak P \in \mathfrak X(\wh H)$, and in order to verify this we use the isomorphism $\overline{\vartheta} \colon \mathcal{C}_v(H)\rightarrow \mathcal{C}_v(\wh{H})$.  If $g \in \mathcal C_v(\wh H)$, then there exists some $\mathfrak p \in \mathcal P$ such that $g = \overline \vartheta([\mathfrak p])$, and then $H_{\mathfrak p}$ is a discrete valuation monoid by Theorem \ref{5.5}.1. Hence $H_{\mathfrak p} = \wh{H_{\mathfrak p}} = \wh H_{\mathfrak p}$, and thus $\mathfrak P = \mathfrak p H_\mathfrak p \cap \wh H \in \mathfrak X(\wh H)$. Since $\mathfrak p \subset\mathfrak P$, we obtain $\mathfrak P = \mathfrak p_{v(\wh H)}$, and thus $\mathfrak P \in \overline \vartheta ([\mathfrak p]) = g$.

Therefore, by Lemma \ref{4.2} (applied with $T = \{1\}$ and $G_P = \mathcal C_v (\widehat H)$), there is a transfer homomorphism
 $\boldsymbol \beta \colon \wh{H}\rightarrow
\mathcal{B}(\mathcal{C}_v(\wh{H})) \cong \mathcal{B}(G)$ with $\mathsf{c}(\wh{H}, \boldsymbol \beta)\leq 2$. Hence, by Lemma \ref{4.1},
$\theta = \boldsymbol \beta\circ \iota \colon H \to \mathcal B (\mathcal C_v (H) )$ is a transfer homomorphism satisfying $\mathsf c (H, \theta) \le 2$. Now all remaining assertions follow (note that $|G|\ge 3$, implies that $\mathcal B (G)$ is not factorial whence $\mathsf c \big( \mathcal B (G) \big) \ge 3$).

\smallskip
2.(b)  We may suppose that $D_1$ has rank $s \ge 2$, say
\[
D_1 \subset \wh{D_1} = \wh{D_1}^{\times} \time [q_1,
\ldots, q_s] \ , \quad  D_1 \setminus D_1^\times \subset q_1 \cdot
\ldots \cdot q_s \wh{D_1} \quad \text{and} \quad (q_1 \cdot
\ldots \cdot q_s) \wh{D_1} \subset D_1 \,,
\]
where $q_1, \ldots, q_s$ are pairwise non-associated prime elements
of $\wh{D_1}$. If $|G| = 1$, then $B = F$, and the assertion follows from Theorem \ref{3.7}. From now on we suppose that $|G| \ge 2$, and we start with the following assertions.

\begin{enumerate}
\item[{\bf A3.}\,] Let $1 \ne a \in D_1 \cap B$. Then $2 \in \mathsf L_B (a)$ or $2 \in \mathsf L_B (au)$ for some $u \in \mathcal A (B)$.

\smallskip
\item[{\bf A4.}\,]    There is an element $c \in B$ such that $\{2,3\} \subset \mathsf L_B (c)$.
\end{enumerate}

\smallskip
{\it Proof of} \,{\bf A3}.\, Suppose that  $a = \epsilon q_1^{m_1} \cdot \ldots \cdot q_s^{m_s}$ with $\epsilon \in \wh{D_1}^{\times}$ and $m_1, \ldots, m_s \in \N$. If $a \in \mathcal A (B)$, then  $2 \in \mathsf L_B (au)$ for each $u \in \mathcal A (B)$. Suppose that  $a \notin \mathcal A (B)$. Then $a \notin \mathcal A (D_1)$ and hence $\min \{m_1, \ldots, m_s\} \ge 2$. Consider the following product decomposition
\[
a = \big( \epsilon q_1 q_2^{m_2-1} \cdot \ldots \cdot q_s^{m_s-1} \big) \big( q_1^{m_1-1} q_2 \cdot \ldots \cdot q_s \big) \,.
\]
If one of the two factors lies in $B$, then so does the other one, and then both elements are atoms of $B$. Thus $2 \in \mathsf L_B (a)$. Otherwise, suppose that $[ \epsilon q_1 q_2^{m_2-1} \cdot \ldots \cdot q_s^{m_s-1}] = g \in G \setminus \{0\}$. Then $(-g) \epsilon q_1 q_2^{m_2-1} \cdot \ldots \cdot q_s^{m_s-1} \in \mathcal A (B)$, $g q_1^{m_1-1} q_2 \cdot \ldots \cdot q_s \in \mathcal A (B)$, and their product is equal to $\big( (-g)g \big) a$. Thus $2 \in \mathsf L_B \big( (-g)ga \big)$. \qed ({\bf A3)}

\smallskip
{\it Proof of} \,{\bf A4}. \, If $|G| \ge 3$, then obviously there is a $c \in \mathcal B (G) \subset B$ such that $\mathsf L_B (c) = \{2,3\}$ (see, for example, \cite[Theorem 6.6.2]{Ge-HK06a}). Suppose that $|G| = 2$, set $G = \{0, g\}$, $u = q_1 \cdot \ldots \cdot q_s$, and distinguish the following two cases.

\noindent
CASE 1: \ $u \in B$.

We consider the equation
\[
u^3 = (q_1q_2^2 \cdot \ldots \cdot q_s^2)(q_1^2 q_2 \cdot \ldots
\cdot q_s) \,.
\]
If $ q_1 q_2^2 \cdot \ldots \cdot q_s^2 \in B$, then $q_1^2 q_2
\cdot \ldots \cdot q_s \in B$, both elements are atoms of $B$, and
$u^3$ can be written as a product of two atoms. Suppose that $ q_1
q_2^2 \cdot \ldots \cdot q_s^2 \notin B$. Then $q_1^2 q_2 \cdot
\ldots \cdot q_s \notin B$, and hence $ q_1 q_2^2 \cdot \ldots \cdot
q_s^2 g$ and  $q_1^2 q_2 \cdot \ldots \cdot q_sg$ are atoms in $B$.
The equation
\[
(q_1 \cdot \ldots \cdot q_s) ( q_1^3q_2 \cdot \ldots \cdot q_s) =
(q_1^2q_2 \cdot \ldots \cdot q_s)^2 \in B
\]
shows that $q_1^3q_2 \cdot \ldots \cdot q_s \in B$ and hence it is
an atom of $B$. Finally the equation
\[
(q_1q_2^2 \cdot \ldots \cdot q_s^2g)(q_1^3q_2 \cdot \ldots \cdot
q_s) = (q_1 \cdot \ldots \cdot q_s)^2 (q_1^2q_2 \cdot \ldots \cdot
q_sg)
\]
shows that a product of two atoms of $B$ can be written as a product
of three atoms of $B$.

\noindent
CASE 2: \ $u \notin B$.

Then $v = u^2 \in \mathcal A (B)$, and $|\{ i \in [1,s] \mid [q_i] = g \}|$ is odd, say $[q_1] = \ldots = [q_{2k+1}] = g$ and $[q_{2k+2}] = \ldots = [q_s] = 0$ for some $k \in [0, (s-1)/2]$. If $k = 0$, then
\[
c = v (q_1^4q_2 \cdot \ldots \cdot q_s) = (q_1^2 q_2 \cdot \ldots \cdot q_s)^3
\]
has the required property, since $q_1^4q_2 \cdot \ldots \cdot q_s, q_1^2 q_2 \cdot \ldots \cdot q_s \in \mathcal A (B)$. If $k \ge 1$, then $v^2 = u_1u_2u_3$, where
\[
u_1 = q_1^2 q_2 \cdot \ldots \cdot q_s , \ u_2 = q_1 q_2^2 q_3 \cdot \ldots \cdot q_s, \ \text{and} \ u_3 = q_1q_2q_3^2 q_4^2 \cdot \ldots \cdot q_s^2 \,.
\]
Since $u_1, u_2, u_3 \in \mathcal A (B)$, $v^2$ has the required property. \qed ({\bf A4)}

\smallskip
Thus both {\bf A3} and {\bf A4} hold true. Let ${\ell} \in \N$ and $v \in D_1 \cap \mathcal A (B)$. Then {\bf A3} implies that $2 \in \mathsf L_B (v^{\ell})$ or $2 \in \mathsf L_B (v^{\ell}u)$ for some $u \in \mathcal A (B)$. Thus ${\ell} \in \mathcal U_2 (B)$ or ${\ell}+1 \in \mathcal U_2 (B)$, and this implies that $l+1\in\mathcal U_3 (B)$ or $l+2\in\mathcal U_3 (B)$.

\smallskip
It remains to show that $\mathcal U_k (B)\supset\N_{\geq 4}$ for all $k\geq 4$. By Lemma \ref{2.1}, it suffices to verify that $\mathcal U_4 (B)\supset\N_{\geq 4}$. Let $k\geq 4$. By {\bf A4} there is an element $c\in B$ such that $\{2,3\}\subset\mathsf L_B(c)$. We choose an atom $v\in D_1\cap\mathcal A (B)$. If $2\in\mathsf L_B(v^{k-3})$, then $\{4,k\}\subset\mathsf L_B(cv^{k-3})$. Otherwise there is an $u\in\mathcal A (B)$ such that $2\in\mathsf L_B(uv^{k-3})$, and hence $\{4,k\}\subset\mathsf L_B(u^3v^{k-3})$.
\end{proof}

\begin{remark}\label{R:GenHF}
Let $H$ be a seminormal $v$-noetherian weakly Krull monoid with $\emptyset \ne \mathfrak f = (H \DP
\wh H) \subsetneq H$ such that $H_{\mathfrak p}$ is finitely primary of rank one for each $\mathfrak{p}\in\mathfrak{X}(H)$. Then the proof of 2.(a).(ii) and Corollary \ref{L:CharClsIso} show that, if  $\overline{\vartheta}\colon \mathcal{C}_v(H)\rightarrow \mathcal{C}_v(\wh{H})$ is an isomorphism, then the
inclusion $H \hookrightarrow \wh{H}$ is a transfer homomorphism (without any further assumption on the distribution of prime ideals). We will apply this remark in the next
section.
\end{remark}

\smallskip
We end with a list of examples indicating that the statements of  Theorem \ref{5.6}  need not hold without the assumption on seminormality.

\medskip
\begin{examples} \label{5.12}~

1. There is a one-dimensional  noetherian local domain $(R, \mathfrak m)$ with non-zero conductor $(R \DP \wh R)$ and finite residue field $R/\mathfrak m$ whose monotone catenary degree $\mathsf c_{\monn} (R)$ is infinite  (\cite[Example 6.3]{Ha09c}). Clearly, $R^{\bullet}$ is finitely primary, and thus seminormality is the only assumption of Theorem \ref{5.6} which does not hold here.

\smallskip
2. Let $H$ be a $v$-noetherian primary monoid such that $\wh H$ is Krull. If $(H \DP \wh H) \ne \emptyset$ or $|\mathfrak X (\wh H)| \ge 2$, then $H$ has finite catenary degree. However, there are examples of such monoids $H$ with $(H \DP \wh H) = \emptyset$, $|\mathfrak X (\wh H)| =1$, $\wh H$ a discrete valuation monoid, and $H$ has infinite set of distances $\Delta (H)$ and hence infinite catenary degree $\mathsf c (H)$ (\cite[Theorem 3.5 and Proposition 3.7]{Ge-Ha-Le07}).

\smallskip
3. Suppose that  $k \ge 4$. Then any of the statements in Theorem \ref{5.6}, that $\mathcal U_k (H)$ is a finite non-empty interval or that $\N_{\ge 4} \subset \mathcal U_k (H)$, implies that $\min \Delta (H) = 1$. On the other hand, for every $d \in \N$ there is a   $v$-noetherian finitely primary monoid $S$ with $\min \Delta (S) = d$ (\cite[Example 3.1.9, Corollary 2.9.8]{Ge-HK06a}).

\smallskip
4. It is easy to construct  Krull monoids (or Dedekind domains) with finite class group whose unions of sets of lengths are finite but not arithmetical progressions (\cite[Example 3.1]{Fr-Ge08}). These monoids are seminormal $v$-noetherian (because they are Krull) but they have classes containing no prime divisors. There are also numerical monoids whose unions of sets of lengths are not arithmetical progressions (\cite[Remarks 6.7]{Bl-Ga-Ge11a}; note that a numerical monoid is a $v$-noetherian finitely primary monoid of rank one). On the other hand, Krull monoids with finite class group as well as
numerical monoids have finite monotone catenary degree.
\end{examples}

\bigskip
\section{On the half-factoriality of seminormal weakly Krull monoids} \label{6}
\bigskip

Half-factoriality has been a central topic in factorization theory since its very beginning; see, for example,  \cite{Ch-Co00, Co03, Co05a, Sc05c, Ma-Ok09a, Ro11a, Co-Sm11a, Ma-Ok16a}. In 1960 Carlitz showed that a ring of integers in a number field is half-factorial if and only if the class group has at most two elements. In 1983 Halter-Koch gave a  characterization of half-factorial orders in quadratic number fields (\cite{HK83a}, \cite[Theorem 3.7.15]{Ge-HK06a}). In spite of recent partial results \cite{Pi00, Pi02a, Ka05b, Ph12b}, there is still a long way to go to a characterization of half-factorial orders in a general algebraic number field. Much emphasis has been put on the question of which conditions guarantee that localizations (or more generally, overrings) of half-factorial domains are again half-factorial. A standing conjecture states that overrings of half-factorial orders in algebraic number fields, which are contained in the principal order, are all still half-factorial (\cite[page 323]{Ka05b}).

As in Section \ref{5}, we work in the setting of $v$-noetherian weakly Krull monoids with non-trivial conductor, and we always assume seminormality. Theorem \ref{6.2} offers a  characterization of half-factoriality in natural algebraic terms. Furthermore, we will show that half-factoriality is equivalent to the fact that the catenary degree is at most two. This equivalence holds true for arbitrary Krull monoids provided that every class contains a prime divisor, and it fails without this additional condition on primes. Furthermore, it fails for $v$-noetherian finitely primary monoids which are not seminormal (\cite[Example 3.9]{Ph12b}). As it was in Theorem \ref{5.6}, the assumption that every class contains a prime divisor is crucial. Without this assumption, half-factoriality does not imply that the class group has at most two elements, not even in the Krull monoid setting (see \cite{Sc05c, Ge-Go03}).  Proposition \ref{6.4}  reveals that -- under very mild assumptions  -- overrings of half-factorial domains are half-factorial again. In particular, this confirms the  above mentioned conjecture  in the seminormal case.

\medskip
\begin{lemma} \label{6.1}
Let $H$ be a $v$-noetherian weakly Krull monoid such that $H_{\mathfrak p}$ is finitely primary for each $\mathfrak p \in \mathfrak X (H)$.
Let $\pi \colon \mathfrak X(\wh H) \to \mathfrak X(H)$ be the natural map defined by \,$\pi(\mathfrak P) = \mathfrak P \cap H$ for all $\mathfrak P \in \mathfrak X(\wh H)$.
\begin{enumerate}
\item  Suppose that  $\mathcal C (H)$ is finite.
       \begin{enumerate}
       \item If $H$ is half-factorial, then  $\pi$ is bijective.

       \smallskip
       \item If $H$ is seminormal and half-factorial, then $\mathsf c \big( \mathcal I_v^* (H) \big) \le  2$.
       \end{enumerate}

\smallskip
\item Suppose that $H$ is seminormal and has trivial class group $($i.e., $H$ is seminormal weakly factorial$)$. Then the following statements are equivalent{\rm \,:}
       \begin{enumerate}
       \item $\mathsf c (H) \le 2$.

       \item $H$ is half-factorial.

       \item  $\pi$ is bijective.
       \end{enumerate}
\end{enumerate}
\end{lemma}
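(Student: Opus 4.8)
The plan is to use the triviality of the class group to identify $H_{\red}$ with the coproduct of its localizations, and then to read off all three conditions directly from Theorem~\ref{3.7}. First I would note that $\partial_H \colon H_{\red} \to \mathcal I_v^*(H)$ is an isomorphism: its image $\mathcal H$ is a saturated submonoid of the reduced monoid $\mathcal I_v^*(H)$, and by Proposition~\ref{5.3}.4 the associated class group $\mathcal C(\mathcal H, \mathcal I_v^*(H)) = \mathcal C_v(H)$ is trivial; since a saturated submonoid $S$ of a reduced monoid $D$ satisfies $S = \{a \in D \mid [a] = 0\}$, triviality of the class group forces $\mathcal H = \mathcal I_v^*(H)$. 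Composing with the isomorphism $\mathcal I_v^*(H) \cong \coprod_{\mathfrak p \in \mathfrak X(H)} (H_{\mathfrak p})_{\red}$ of Proposition~\ref{5.3}.4, I obtain an isomorphism $H_{\red} \cong D$, where $D = \coprod_{\mathfrak p \in \mathfrak X(H)} (H_{\mathfrak p})_{\red}$. By Lemma~\ref{5.1}.2 every factor $(H_{\mathfrak p})_{\red}$ is seminormal, and by hypothesis it is finitely primary of rank $s_{\mathfrak p} = |\pi^{-1}(\mathfrak p)|$ (Lemma~\ref{5.1}.3). Hence Lemma~\ref{3.5} shows that each factor of rank one is half-factorial with catenary degree at most two, while each factor of rank at least two is bifurcus; thus $D$ is precisely of the shape to which Theorem~\ref{3.7} applies, with $J = \{\mathfrak p \in \mathfrak X(H) \mid s_{\mathfrak p} \ge 2\}$.

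Since $\Delta$ and $\mathsf c$ depend only on the reduced monoid, the isomorphism $H_{\red} \cong D$ gives $\Delta(H) = \Delta(D)$ and $\mathsf c(H) = \mathsf c(D)$. By Lemma~\ref{5.1}.3, $\pi$ is bijective if and only if $s_{\mathfrak p} = 1$ for all $\mathfrak p \in \mathfrak X(H)$, that is, if and only if $J = \emptyset$; this is the content of (c). It therefore suffices to prove the dichotomy that $J = \emptyset$ forces (a) and (b), whereas $J \ne \emptyset$ makes both fail. If $J = \emptyset$, then Theorem~\ref{3.7}.1 yields $\mathsf c(H) = \mathsf c(D) \le 2$, which is (a), and the basic inequality~(\ref{basic3}) then gives $\Delta(H) = \emptyset$, so $H$ is half-factorial, which is (b). If $J \ne \emptyset$, then Theorem~\ref{3.7}.2(a) gives $\Delta(D) = \{1\}$ and $\mathsf c(D) = 3$, whence $\Delta(H) = \{1\} \ne \emptyset$ and $\mathsf c(H) = 3 > 2$; thus $H$ is not half-factorial and $\mathsf c(H) \not\le 2$, so both (a) and (b) fail. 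This establishes (a) $\Leftrightarrow$ (b) $\Leftrightarrow$ (c).

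The whole argument is bookkeeping once the structural reduction is carried out, so the only delicate step is the first one: the passage from the trivial class group to the coproduct decomposition $H_{\red} \cong \coprod_{\mathfrak p} (H_{\mathfrak p})_{\red}$. After that, the dichotomy of Theorem~\ref{3.7} between the bifurcus and catenary-degree-two cases supplies everything, and I anticipate no further obstacle beyond checking that the ranks of the localizations correctly determine membership in $J$.
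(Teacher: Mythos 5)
Your argument for part~2 is correct and is essentially the paper's route: the triviality of $\mathcal C_v(H)\cong\mathcal C(H)$ forces $\mathcal H=\mathcal I_v^*(H)$, so $H_{\red}\cong\coprod_{\mathfrak p}(H_{\mathfrak p})_{\red}$ via Proposition~\ref{5.3}.4, and the dichotomy of Theorem~\ref{3.7} (together with Lemma~\ref{3.5} and Lemma~\ref{5.1}.3) then yields the equivalence of (a), (b), (c). Your handling of (b)$\Rightarrow$(c) directly through $\Delta(D)=\{1\}$ is a mild simplification of the paper, which instead routes this implication through part~1.(a).

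The genuine gap is that the proposal proves only part~2 and never addresses part~1, where $\mathcal C(H)$ is assumed finite but not trivial. There your structural reduction breaks down at the first step: with a nontrivial class group, $\mathcal H=\partial_H(H_{\red})$ is a \emph{proper} saturated submonoid of $\mathcal I_v^*(H)\cong\coprod_{\mathfrak p}(H_{\mathfrak p})_{\red}$, so $H_{\red}$ is not the coproduct and Theorem~\ref{3.7} cannot be applied to $H$ directly. For 1.(a) the paper argues as follows: if $\pi$ is not bijective, some $H_{\mathfrak p}$ has rank $s\ge 2$ and exponent $\alpha$; by Lemma~\ref{3.0} one finds, for every $N$, relations $u_1\cdot\ldots\cdot u_k=v_1\cdot\ldots\cdot v_l$ among atoms of $(H_{\mathfrak p})_{\red}$ with $k\le 2\alpha$ and $l\ge N$. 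Writing $n=\exp(\mathcal C(H))$, each $w^n$ with $w\in\mathcal A((H_{\mathfrak p})_{\red})$ lies in the image $S$ of $H_{\red}$ with $\mathsf L_S(w^n)\subset[1,n]$, so $u_1^n\cdot\ldots\cdot u_k^n=v_1^n\cdot\ldots\cdot v_l^n$ produces in $S$ two factorizations of lengths at most $2\alpha n$ and at least $l$, contradicting half-factoriality. This exponent trick is the missing idea, and it is precisely where finiteness of $\mathcal C(H)$ is used. Part~1.(b) then follows because $\pi$ bijective makes every $(H_{\mathfrak p})_{\red}$ seminormal finitely primary of rank one, hence of catenary degree at most two by Lemma~\ref{3.5}.2, and Theorem~\ref{3.7}.1 applies to $\mathcal I_v^*(H)\cong\coprod_{\mathfrak p}(H_{\mathfrak p})_{\red}$ (note the claim there is about $\mathcal I_v^*(H)$, not about $H$, so no class-group hypothesis beyond 1.(a) is needed). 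You should supply both of these arguments to complete the proof.
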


\begin{proof}
By Proposition \ref{5.3}, there is an isomorphism $\delta_H \colon \mathcal I_v^*(H) \to D= \coprod_{\mathfrak p \in \mathfrak X(H)} (H_\mathfrak p)_\red$.

1.(a) Recall that  $\partial_H \colon H_\red \to \mathcal I_v^*(H)$, defined by $\partial(aH^\times) = aH$ for all $a \in H$, is a monomorphism, and set $S = \delta_H \big( \partial_H (H_\red) \big) \subset D$. Suppose that $H$ is half-factorial. Then $S$ is half-factorial, and we assume to the contrary that $\pi$ is not bijective. Then, by Lemma \ref{5.1}.3, there is some $\mathfrak p \in \mathfrak X (H)$ such that $H_{\mathfrak p}$ has rank $s \ge 2$, and hence $(H_{\mathfrak p})_{\red} \subset D$ has rank $s$. We suppose it has   exponent $\alpha$ and that $\mathcal C (H)$ has exponent $n$. By Lemma \ref{3.0}, there are,  for every $N \in \N$,  atoms $u_1, \ldots, u_k, v_1, \ldots, v_l \in (H_{\mathfrak p})_{\red}$ such that $u_1 \cdot \ldots \cdot u_k = v_1 \cdot \ldots \cdot v_l$ with $k \le 2 \alpha$ and $l \ge N$.  Since for every atom $w \in (H_{\mathfrak p})_{\red}$, $w^n \in S$ with $\mathsf L_S (w^n) \subset [1,n]$, the equation $u_1^n \cdot \ldots \cdot u_k^n = v_1^n \cdot \ldots \cdot v_l^n$ shows that $S$ is not half-factorial, a contradiction.

\smallskip
1.(b) The assertion follows follows from 1.(a), from Lemma \ref{3.5}.2, and from Theorem \ref{3.7}.

\smallskip
2. (a)\, $\Rightarrow$\, (b) \ follows from the basic inequality (\ref{basic3}), and \
(b)\, $\Rightarrow$\, (c) \ follows from 1.

(c)\, $\Rightarrow$\, (a) \ Suppose that $\pi$ is bijective. Then each  $H_{\mathfrak p}$ has rank one by Lemma \ref{5.1}.3. Thus  Lemma \ref{3.5} and  Theorem \ref{3.7} imply $\mathsf c (H) \le 2$.
\end{proof}

\medskip
We need a further arithmetical invariant.
Let $H$ be an atomic monoid. For $u \in \mathcal A (H)$, we denote by $\omega (H, u)$ the smallest $N \in \N_0 \cup \{\infty\}$ with the following property: if $n \in \N$ and $a_1, \ldots, a_n \in H$ are such that $u$ divides $a_1 \cdot \ldots \cdot a_n$, then $u$ already divides a subproduct consisting of at most $N$ factors. We set
\[
\omega (H) = \sup \{ \omega (H, u) \mid u \in \mathcal A (H) \} \in \N_0 \cup \{\infty\} \,.
\]
By definition, an element $u \in H$ is a prime element of $H$ if and only if $\omega (H, u ) = 1$, and thus $H$ is factorial if and only if $\omega (H) = 1$. If $H$ is not factorial, then $\mathsf c (H) \le \omega (H)$ (\cite[Proposition 3.6]{Ge-Ka10a}).  If $H$ is $v$-noetherian, then $\omega (H, u) < \infty$ for all $u \in \mathcal A (H)$ (\cite[Theorem 4.2]{Ge-Ha08a}).

\medskip
\begin{theorem}[\bf Characterization of half-factoriality] \label{6.2}
Let $H$ be a seminormal $v$-noetherian weakly Krull monoid with $\emptyset \ne \mathfrak f = (H \DP \wh H) \subsetneq H$ such that
$H_{\mathfrak p}$ is finitely primary of rank $s_{\mathfrak p}$ for each $\mathfrak p \in \mathfrak X (H)$.
Let $\pi \colon \mathfrak X(\wh H) \to \mathfrak X(H)$ be the natural map defined by \,$\pi(\mathfrak P) = \mathfrak P \cap H$ for all $\mathfrak P \in \mathfrak X(\wh H)$, and let \,$\overline \vartheta \colon \mathcal C_v(H) \to \mathcal C_v(\wh H)$ be the epimorphism given by {\rm Proposition \ref{exactsequence}}.
Suppose that the class group $G = \mathcal C (H)$ is finite, and that every class contains a $\mathfrak p \in \mathfrak X (H)$ with $\mathfrak p \not\supset \mathfrak f$.
\begin{enumerate}
\item The following statements are equivalent{\rm \,:}
      \begin{enumerate}
      \smallskip
      \item $\mathsf c (H) \le 2$.

      \smallskip
      \item $H$ is half-factorial.

      \smallskip
      \item $|G| \le 2$, and the maps $\pi$ and $\overline{\vartheta}$ are both bijective.
      \end{enumerate}

\smallskip
\item Suppose that $|G|=2$ and that $H$ is not half-factorial.
      \begin{enumerate}
      \item Suppose that  $\pi$ is bijective. If there is precisely one $\mathfrak p \in \mathfrak X (H)$ such that $|\{ [u] \in G \mid u \in \mathcal A (H_{\mathfrak p})\}| > 1$,  then $\mathsf c (H) = \omega (H) = 3$ and $\Delta (H) = \{1\}$. Otherwise, we have $\mathsf c (H) = \omega (H) = 4$ and $\Delta (H) = [1,2]$.

      \smallskip
      \item If  $\pi$ is not bijective, then $\omega (H) = \infty$.
      \end{enumerate}
\end{enumerate}
\end{theorem}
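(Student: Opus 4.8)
The plan is to reduce everything to the block-monoid model and then treat three regimes separately according to the behaviour of $\pi$ and $\overline\vartheta$. First I would invoke Theorem \ref{5.5} to fix the isomorphism $\chi\colon \mathcal I_v^*(H)\to \mathcal F(\mathcal P)\times D_1\times\cdots\times D_n$ with each $D_i$ reduced seminormal finitely primary, and then pass, exactly as in the proof of Theorem \ref{5.6}, to the situation where $H=H^*\subset D=\mathcal F(\mathcal P)\times\prod_{i} D_i$ is a cofinal saturated submonoid with $G=\mathcal C(H,D)$. By Lemma \ref{5.1}.3, $\pi$ is bijective if and only if every $D_i$ has rank one, and by Corollary \ref{L:CharClsIso}, $\overline\vartheta$ is an isomorphism if and only if, for each $\mathfrak p\in\mathfrak X(H)$, all atoms of $H_{\mathfrak p}$ lie in a single class of $G$. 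Throughout I would use the transfer homomorphism $\boldsymbol\beta\colon H\to B=\mathcal B(G,T,\iota)$ of Lemma \ref{4.2} (with $T=\prod_i D_i$ and $\mathsf c(H,\boldsymbol\beta)\le 2$), so that by Lemma \ref{4.1} the invariants $\mathsf L$, $\Delta$, $\mathsf c_{\adj}$ and, up to the additive constant $2$, also $\mathsf c$, may be computed in $B$.

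For Part (1), the implication (a)$\Rightarrow$(b) is immediate from $(\ref{basic3})$. For (c)$\Rightarrow$(a): when $\pi$ is bijective and $\overline\vartheta$ is an isomorphism, Remark \ref{R:GenHF} gives that $H\hookrightarrow \wh H$ is a transfer homomorphism; composing with the standard transfer $\wh H\to\mathcal B(G)$ (Lemma \ref{4.2}, using that every class of $\mathcal C_v(\wh H)\cong G$ contains a prime, as established in the proof of Theorem \ref{5.6}.2(a)(ii)) and observing that $\mathcal B(G)$ is factorial for $|G|\le 2$, I would conclude $\mathsf c(\wh H)\le 2$ and hence $\mathsf c(H)\le 2$ by Lemma \ref{4.1}.2. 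The substantive direction is (b)$\Rightarrow$(c): half-factoriality forces $\pi$ bijective by Lemma \ref{6.1}.1(a); I would then prove that $\overline\vartheta$ is an isomorphism by contraposition, since if some $H_{\mathfrak p}$ carries two atoms in different classes, the availability of a prime divisor in every class lets me write down an explicit element of $H$ with two factorization lengths (for instance, when $|G|=2$, an element transferring to $g^2\eta^2q^2$, which has lengths $\{2,3\}$), contradicting half-factoriality. Once $\overline\vartheta$ is an isomorphism, the transfer $H\to\mathcal B(G)$ reduces half-factoriality to Carlitz's criterion $|G|\le 2$.

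For Part (2)(a) I first note that, since $H$ is not half-factorial while $\pi$ is bijective, Part (1) forces $\overline\vartheta$ to fail to be an isomorphism, so at least one $D_i$ is \emph{mixed}, meaning its atoms occupy both classes of $G=\{0,g\}$. I would compute $\Delta(H)=\Delta(B)$ and $\mathsf c(H)$ directly in $B$, splitting according to the number of mixed components: a single mixed component produces the witness of lengths $\{2,3\}$ and, after checking that no larger distance occurs, yields $\Delta(H)=\{1\}$ and, via $(\ref{basic3})$ together with a matching upper bound for $\mathsf c(B)$, the value $\mathsf c(H)=3$; two or more mixed components additionally produce a witness of lengths $\{2,4\}$, giving $2\in\Delta(H)$, hence $\Delta(H)=[1,2]$ and $\mathsf c(H)=4$. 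The equalities $\omega(H)=\mathsf c(H)$ would then follow by combining the general lower bound $\mathsf c(H)\le\omega(H)$ with a direct upper estimate $\omega(H,u)\le 3$ (respectively $\le 4$) obtained from the explicit list of atoms of $B$ and a finite case analysis of how an atom can divide a product in the rank-one, $|G|=2$ setting.

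For Part (2)(b), $\pi$ not bijective means some $D_i$ has rank $s\ge 2$, and there I would exhibit atoms of unbounded $\omega$. Writing $w=q_1\cdots q_s$ and $u_N=q_1^N q_2\cdots q_s$, which is an atom of $D_i$ by the divisibility computations behind Lemma \ref{3.0}, I would verify that $u_N\mid w^{N+1}$ while $u_N\nmid w^m$ for $m\le N$, and then lift this to $H$: using a prime divisor in every class I form genuine atoms $\tilde u_N=u_Np_N$ and $A=wp'$ of $H$ of class $0$, together with a product $A_0A_1\cdots A_{N+1}$ that $\tilde u_N$ divides while no proper subproduct does, forcing $\omega(H,\tilde u_N)\ge N+2$ and hence $\omega(H)=\infty$. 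I expect the main obstacle to be the exact quantitative assertions of Part (2)(a): the dichotomy between one and several mixed components for both the catenary degree and $\Delta$, and especially for $\omega$, which is not preserved by the transfer to $B$ and therefore must be controlled by a somewhat delicate direct divisibility analysis. Securing the matching upper bounds $\mathsf c(B)\le 3,4$ and $\omega(H)\le 3,4$ is where the real work lies.
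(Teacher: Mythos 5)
Your overall architecture coincides with the paper's: the same reduction to a cofinal saturated $H\subset\mathcal F(\mathcal P)\times\prod_i D_i$ and to the $T$-block monoid $B$, Lemma \ref{6.1} for the bijectivity of $\pi$, Corollary \ref{L:CharClsIso} to translate bijectivity of $\overline\vartheta$ into the condition that all atoms of each $H_{\mathfrak p}$ lie in a single class, and, for Part (2)(a), the squeeze $2+\max\Delta(H)\le\mathsf c(H)\le\omega(H)$ combined with a finite classification of the non-prime atoms. Your Part (2)(b) is a legitimate alternative route: the paper simply quotes that $\mathcal U_2(H)$ is infinite (Theorem \ref{5.6}.2(b)) and invokes \cite[Proposition 3.6]{Ge-Ka10a}, whereas your atoms $q_1^Nq_2\cdots q_s$ give a self-contained divisibility argument; both work, yours being more explicit at the cost of some class-bookkeeping.

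There is, however, a genuine (though repairable) gap in your ordering of (b)$\Rightarrow$(c) in Part (1). You establish that $\overline\vartheta$ is an isomorphism \emph{before} knowing $|G|\le 2$, by asserting that two atoms of some $H_{\mathfrak p}$ in distinct classes always yield an explicit element with two lengths. Your witness (an element transferring to $g^2\eta^2q^2$, with lengths $\{2,3\}$) uses that some atom of the mixed localization lies in the trivial class, so that $q$ itself is an atom of $B$. For general finite $G$ this can fail: the set $\{[u]\mid u\in\mathcal A(H_{\mathfrak p})\}$ is a coset of the subgroup $\{[\epsilon]\mid \epsilon\in\wh{H_{\mathfrak p}}^\times\}$ of $G$ and need not contain $0$ (for instance $G\cong\Z/4\Z$ with atom classes $\{g,3g\}$); in that situation the natural candidates, such as $(p_1u)(p_2v)$ with $[p_1]=-[u]$ and $[p_2]=-[v]$, have $\mathsf L=\{2\}$, and the mixed component alone produces no length defect. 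The paper avoids this by first deducing $|G|\le 2$ from the fact that $\mathcal B(G)\subset B$ is divisor-closed (so half-factoriality of $B$ forces half-factoriality of $\mathcal B(G)$, hence $|G|\le 2$ by Carlitz), and only then running the witness construction, where $|G|=2$ forces both classes, in particular $0$, to occur among the atom classes of the mixed localization. Reordering your argument in this way (and only illustrating the witness in the $|G|=2$ case, which is then the only case left) closes the gap; the remainder of your proof goes through.
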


\begin{proof}
We set $\mathcal P^* = \{\mathfrak p \in \mathfrak X (H) \mid \mathfrak p \supset \mathfrak f \}$ and $\mathcal P = \mathfrak X (H) \setminus \mathcal P^*$.
Arguing as at the beginning of the proof of Theorem \ref{5.6}, we may suppose that
\[
H \subset D = \mathcal F (\mathcal P) \times \prod_{i=1}^n D_i
\]
is saturated with finite class group $G$, and that every class contains a prime $p \in \mathcal P$. Moreover, for each $i \in [1,n]$, $D_i$ is a reduced seminormal finitely primary monoid of rank $s_i$, and whenever $s_i=1$, we suppose that $D_i \subset \wh{D_i} = \wh{D_i}^{\times} \time [q_i]$.
We set $T = \prod_{i=1}^n D_i$ and $B = \mathcal B (G, T, \iota) \subset F = \mathcal F (G) \time T$.
By Lemma \ref{4.2}, $H$ is half-factorial if and only if $B$ is half-factorial.

\smallskip
1. (a)\, $\Rightarrow$\, (b) \ This follows from the basic inequality (\ref{basic3}).

(b)\, $\Rightarrow$\, (c) \ If $|G| \ge 3$, then $\mathcal B (G)$ is not half-factorial, and since $\mathcal B (G) \subset B$ is divisor-closed, $B$ is not  half-factorial, a contradiction. Thus $|G| \le 2$. Lemma \ref{6.1} implies that  $\pi$ is bijective. Assume to the contrary that $\overline \vartheta$ is not an isomorphism. Then Corollary \ref{L:CharClsIso} implies that $|\{ [u] \in G \mid u \in \mathcal A (D_i)\}| > 1$ for some $i \in
[1,n]$. In particular we obtain that $|G|=2$, say $G = \{0, g\}$. We show that $B$ is not half-factorial, a contradiction to assumption (b).

By Lemma \ref{3.5}, we get
$\mathcal A (D_i) = \{q_i \epsilon \mid \epsilon \in \wh{D_i}^{\times}\}$, and
after replacing $q_i$ by some $q_i \epsilon$ if necessary we may assume that $[q_i] = 0 \in G$, and we pick some $\epsilon \in \wh{D_i}^{\times}$
such that $[q_i \epsilon] = g$. If $\epsilon \in D_i$, then $\epsilon \in D_i  \cap \wh{D_i}^{\times} = D_i^{\times} = \{1\}$, a contradiction. Thus $\epsilon \notin T$. Then $a = g^2 q_i^2 \in B$, and we assert that $\mathsf L_B (a) = \{2,3\}$, which obviously implies that
$\min \Delta (B) = 1$ and hence $B$ is not half-factorial. Since $q_i \in \mathcal A (B)$ and $g^2 \in \mathcal A (B)$, it follows that $3 \in \mathsf L_B (a)$. Since $[g \epsilon q_i] = 0$,
we infer that $g \epsilon q_i \in B$. Since $\epsilon g \notin \mathcal F (G) \time T \supset B$, we get that  $g \epsilon q_i$ is an atom in $B$. Since all this holds true also for $g \epsilon^{-1} q_i$,
it follows that $2 \in \mathsf L_B (a)$. Since $\max \mathsf L_B (a) < 4$, we get that $\mathsf L_B (a) = \{2,3\}$.

\smallskip
(c)\, $\Rightarrow$\, (a) \ By Theorem \ref{5.6}.2.(a), there exists a
transfer homomorphism $\theta \colon B \to \mathcal B (G)$. Since
$|G| \le 2$, $\mathcal B (G)$ is factorial, and hence $\mathsf  c \big( \mathcal B(G) \big) = 0$. Thus Lemma \ref{4.1} and Lemma \ref{4.2} imply that
\[
\mathsf c (H) \le \max \big\{\mathsf c \big( \mathcal B(G) \big), \mathsf c (H, \boldsymbol \beta) \big\} = 2 \,.
\]

\smallskip
2. We set $G = \{0,g\}$.

2.(a) Since, for each $k \in \N$,  the $\mathcal U_k (H)$ are a finite intervals by Theorem \ref{5.6}.2 and since they are not all equal to $\{k\}$ by assumption,  it follows that $1 \in \Delta (H)$. We show that $\omega (H, u) \le 4$ for all $u \in \mathcal A (H)$, and in the case when the upper bound $4$ is attained, that  $2 \in \Delta (H)$.
Then all assertions follow from
\begin{equation}
2 + \max \Delta (H) \le \mathsf c (H) \le \omega (H) \,,  \label{basic4}
\end{equation}
where the last inequality stems from \cite[Proposition 3.6]{Ge-Ka10a}.
Let $u \in \mathcal A (H)$. If $u$ is a prime, then $\omega (H, u) = 1$.  The non-prime atoms  $u \in H$ have one of the following four forms:
\begin{itemize}
\item $u = p_1p_2$, where $p_1, p_2 \in \mathcal P$ with $[p_1]=[p_2] = g$.

\item $u = p v$, where $p \in \mathcal P$ and $v \in \mathcal A (D_1) \cup \ldots \cup \mathcal A (D_n)$ with $[p]=[v]=g$.

\item $u \in H \cap \big( \mathcal A (D_1) \cup \ldots \cup \mathcal A (D_n) \big)$.

\item $u = v_1 v_2$, where $v_1, v_2 \in \mathcal A (D_1) \cup \ldots \cup \mathcal A (D_n)$ with $[v_1]=[v_2]=g$.
\end{itemize}
It is easy to see that $\omega (H, u) \le 3$ if $u$ has one of the first three forms. Suppose that $u = v_1v_2$ with $v_1, v_2$ as above.
After renumbering if necessary we may suppose that $v_1 \in D_1$ and $v_2 \in D_2$. If there is an $i \in [1,2]$ with $|\{ [w] \in G \mid w \in \mathcal A (D_1)\}| = 1$, then $\omega (H, u) \le  3$. Now suppose that $|\{ [w] \in G \mid u \in \mathcal A (D_i)\}| > 1$ for $i \in [1,2]$. Then there are atoms $w_i = \epsilon_i q_i \in H \cap \mathcal A (D_i)$ for $i \in [1,2]$. Clearly, $u$ divides $w_1^2 w_2^2$ in $D$ and hence in $H$, but it does not divide a subproduct in $H$. Thus $\omega (H, u) \ge  4$, and it is easy to see that $\omega (H, u) \le 4$. Since $\mathsf L_H (w_1^2 w_2^2) = \{2,4\}$, it follows that $2 \in \Delta (H)$.

\smallskip
2.(b) Suppose that  $\pi$ is not bijective. Since $\mathcal U_2 (H)$ is infinite by Theorem \ref{5.6}.2, it follows that $\omega (H) = \infty$ by \cite[Proposition 3.6]{Ge-Ka10a}.
\end{proof}

\smallskip
There are monoids $H$ and $D$ such that $D$ is an \FF-monoid with $\mathsf c (D) = 3$, $H \subset D$ is  saturated and cofinal, the class group $\mathcal C (H, D)$ has two elements but $\mathsf c (H) = \infty$ (see \cite[Example 3.6.2]{Ge-HK06a}). This shows that the results on the catenary degree in Theorem \ref{6.2} heavily depend on the very special setting of Theorem \ref{6.2}.

In the sequel a  domain $R$ is called primary if the monoid $R^{\bullet}$ is primary (equivalently, if $R$ is local and one-dimensional; see Lemma \ref{3.3}.2).

\medskip
\begin{proposition} \label{6.3}
Let $R$ be a seminormal one-dimensional Mori domain such that $\wh{R_{\mathfrak p}}$ is a discrete valuation domain for each ${\mathfrak p}\in\max(R)$. Let $K$ be a quotient field of $R$ and let $R\subset S \subsetneq K$ be an intermediate ring.
\begin{enumerate}
\item $\wh{R}$ and $\wh{S}$ are Dedekind domains and $\wh{S_{\mathfrak q}}=\wh{R_{{\mathfrak q}\cap R}}$ is a discrete valuation domain for all ${\mathfrak q}\in {\rm{spec}}(S)^{\bullet}$.

\smallskip
\item For all ${\mathfrak q}\in {\rm{spec}}(S)^{\bullet}$ such that $S_{\mathfrak q}$ is archimedean,  $S_{\mathfrak q}$ is a seminormal primary Mori domain.

\smallskip
\item $\{{\mathfrak m}\cap S\mid {\mathfrak m}\in\max(\wh{S})\}=\mathfrak{X}(S)$ and $\{{\mathfrak q}\in\mathfrak{X}(S)\mid x\in {\mathfrak q}\}$ is finite for all $x\in S^{\bullet}$.

\smallskip
\item If $(R \DP \wh{R})\not=\{0\}$, then $(S \DP\wh{S})\not=\{0\}$.

\smallskip
\item The following statements are equivalent{\rm \,:}
      \begin{enumerate}
      \item $\bigcap_{{\mathfrak q}\in\mathfrak{X}(S)} S_{\mathfrak q}=S$.
      \item $S_{\mathfrak q}$ is archimedean for all ${\mathfrak q}\in\max(S)$.
      \item $S$ is a seminormal one-dimensional Mori domain.
      \end{enumerate}

\smallskip
\item Suppose that the equivalent statements in $(5.)$ are satisfied.
      Further, let ${\mathfrak q}\in\max(S)$ and ${\mathfrak p}\in\max(R)$ be such that ${\mathfrak q}\cap R={\mathfrak p}$. Suppose for all $u,v\in\mathcal{A}(R_{\mathfrak p})$, there exists some $c\in K^{\bullet}$ such that $uR_{\mathfrak p}\cap R=c(vR_{\mathfrak p}\cap R)$. Then for all $u,v\in\mathcal{A}(S_{\mathfrak q})$ there is some $c\in K^{\bullet}$ such that $uS_{\mathfrak q}\cap S=c(vS_{\mathfrak q}\cap S)$.
\end{enumerate}
\end{proposition}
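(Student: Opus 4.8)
The plan is to reduce the assertion to an equality of $v$-invertible $v$-ideals of $S$ that can be checked localization by localization, and to show that the element $c \in K^{\bullet}$ furnished by the hypothesis for $R$ already does the job for $S$. Since the equivalent conditions of~$(5)$ hold, $S$ is a seminormal one-dimensional Mori domain, so $R^{\bullet}$ and $S^{\bullet}$ are seminormal $v$-noetherian weakly Krull monoids and Proposition~\ref{5.3} applies to both. In particular, for $u \in S_{\mathfrak q}$ the ideal $uS_{\mathfrak q} \cap S$ lies in $\mathcal I_v^*(S)$ with $(uS_{\mathfrak q} \cap S)_{\mathfrak q} = uS_{\mathfrak q}$ and $(uS_{\mathfrak q} \cap S)_{\mathfrak q'} = S_{\mathfrak q'}$ for all $\mathfrak q' \in \mathfrak X(S) \setminus \{\mathfrak q\}$, and likewise over $R$. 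Because a $v$-invertible $v$-ideal of a weakly Krull monoid is the finite-character intersection of its localizations, it suffices to produce $c \in K^{\bullet}$ with $cv/u \in S_{\mathfrak q}^{\times}$ and $c \in S_{\mathfrak q'}^{\times}$ for every $\mathfrak q' \in \mathfrak X(S) \setminus \{\mathfrak q\}$; then the localizations of $uS_{\mathfrak q} \cap S$ and $c(vS_{\mathfrak q} \cap S)$ agree everywhere.

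First I would check that the hypothesis on $R$ is applicable to the given $u,v$. By~$(1)$ we have $\wh{S_{\mathfrak q}} = \wh{R_{\mathfrak p}}$, a discrete valuation domain, whence $R_{\mathfrak p}^{\bullet} \subset S_{\mathfrak q}^{\bullet} \subset \wh{R_{\mathfrak p}}^{\bullet}$. By Lemma~\ref{3.3}, $R_{\mathfrak p}^{\bullet}$ is seminormal finitely primary of rank one, and by~$(2)$ (here $S_{\mathfrak q}$ is archimedean since the conditions of~$(5)$ hold) $S_{\mathfrak q}^{\bullet}$ is primary. Lemma~\ref{3.5}.4 then gives $\mathcal A(S_{\mathfrak q}) = \mathcal A(R_{\mathfrak p})$, so $u,v \in \mathcal A(R_{\mathfrak p})$ and the hypothesis yields $c \in K^{\bullet}$ with $uR_{\mathfrak p} \cap R = c(vR_{\mathfrak p} \cap R)$. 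Localizing this equality and using Proposition~\ref{5.3}.3 over $R$ gives $cv/u \in R_{\mathfrak p}^{\times}$ and $c \in R_{\mathfrak p'}^{\times}$ for all $\mathfrak p' \in \mathfrak X(R) \setminus \{\mathfrak p\}$.

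The crux --- and the step I expect to be the main obstacle --- is to show that $\mathfrak q$ is the \emph{only} prime of $S$ lying over $\mathfrak p$; without this, the localizations at $\mathfrak q'$ with $\mathfrak q' \cap R = \mathfrak p$ and $\mathfrak q' \ne \mathfrak q$ could not be controlled by $c$, and indeed the statement could fail. To prove uniqueness I would pass to $S_{[\mathfrak p]} = (R \setminus \mathfrak p)^{-1}S$, whose nonzero primes correspond exactly to the primes of $S$ over $\mathfrak p$. Since $R \setminus \mathfrak p \subset S \setminus \mathfrak q'$ for every such $\mathfrak q'$, one has $S_{[\mathfrak p]} \subset S_{\mathfrak q'} \subset \wh{S_{\mathfrak q'}} = \wh{R_{\mathfrak p}} =: V$ by~$(1)$, so $S_{[\mathfrak p]} \subset V$ and hence $\wh{S_{[\mathfrak p]}} = (R \setminus \mathfrak p)^{-1}\wh S \subset \wh V = V \subset \wh{S_{[\mathfrak p]}}$, i.e.\ $\wh{S_{[\mathfrak p]}} = V$ is a discrete valuation domain. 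As $S_{[\mathfrak p]}$ is again a seminormal $v$-noetherian weakly Krull monoid (a localization of $S$) which is not a field, Lemma~\ref{5.1}.3 makes the map $\mathfrak X(V) \to \mathfrak X(S_{[\mathfrak p]})$ surjective; since $|\mathfrak X(V)| = 1$, we get $|\mathfrak X(S_{[\mathfrak p]})| = 1$, which is the desired uniqueness.

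Finally I would verify that the same $c$ works over $S$. For $\mathfrak q' \in \mathfrak X(S) \setminus \{\mathfrak q\}$, uniqueness forces $\mathfrak p' := \mathfrak q' \cap R \ne \mathfrak p$, and $\mathfrak p' \in \mathfrak X(R)$ (it is a nonzero prime, else $S_{\mathfrak q'} = K$); from $R_{\mathfrak p'} \subset S_{\mathfrak q'}$ and $c \in R_{\mathfrak p'}^{\times}$ we obtain $c \in S_{\mathfrak q'}^{\times}$. Moreover $R_{\mathfrak p} \subset S_{\mathfrak q}$ together with $cv/u \in R_{\mathfrak p}^{\times}$ gives $cv/u \in S_{\mathfrak q}^{\times}$. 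Hence the localizations of $uS_{\mathfrak q} \cap S$ and $c(vS_{\mathfrak q} \cap S)$ coincide at every $\mathfrak q' \in \mathfrak X(S)$, and the equality $uS_{\mathfrak q} \cap S = c(vS_{\mathfrak q} \cap S)$ of the two $v$-ideals follows, completing the argument.
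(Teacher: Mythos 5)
Your proposal proves only item~6 of the proposition; items~1--5 are nowhere established, and your argument for item~6 explicitly consumes item~1 (for $\wh{S_{\mathfrak q}}=\wh{R_{\mathfrak p}}$ being a discrete valuation domain), item~2 (for $S_{\mathfrak q}$ being seminormal primary), and the equivalences of item~5. Since the statement to be proved is the entire proposition, this is a substantial gap: the bulk of the paper's proof is devoted to items~1--5 (item~1 rests on Barucci's theorem that the complete integral closure of a seminormal Mori domain is Krull, item~3 on identifying $\mathfrak X(S)$ with the traces of $\max(\wh S)$, item~4 on a conductor computation, item~5 on the archimedean/Mori equivalences), and none of that is reproduced or replaced.

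For item~6 itself your argument is correct and runs parallel to the paper's: both reduce $uS_{\mathfrak q}\cap S=c(vS_{\mathfrak q}\cap S)$ to a localization-by-localization comparison of $v$-invertible $v$-ideals via Proposition~\ref{5.3}, both obtain $\mathcal A(S_{\mathfrak q})=\mathcal A(R_{\mathfrak p})$ from Lemma~\ref{3.5}.4, and both hinge on the same key point, correctly identified by you as the crux: $\mathfrak q$ is the \emph{only} maximal ideal of $S$ lying over $\mathfrak p$. You prove this by passing to $(R\setminus\mathfrak p)^{-1}S$, computing its complete integral closure to be the discrete valuation domain $\wh{R_{\mathfrak p}}$, and invoking the surjectivity of $\mathfrak X(\wh H)\to\mathfrak X(H)$ from Lemma~\ref{5.1}.3; this works, but it silently uses that the localization $(R\setminus\mathfrak p)^{-1}S$ is again a $v$-noetherian weakly Krull monoid, which deserves a word of justification. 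The paper's route is more economical: by seminormality (Lemma~\ref{3.2}) the maximal ideals of $S_{\mathfrak q}$ and of any second localization $S_{\mathfrak n}$ over $\mathfrak p$ both coincide with $(\wh{R_{\mathfrak p}}\setminus\wh{R_{\mathfrak p}}^{\times})$ intersected down, whence $\mathfrak q=\mathfrak n$ directly. A further cosmetic difference: the paper first proves the transport identity $(wR_{\mathfrak p}\cap R)S=wS_{\mathfrak q}\cap S$ for all atoms $w$ and then pushes the hypothesis forward by multiplicativity, whereas you transport the element $c$ and check $cv/u\in S_{\mathfrak q}^{\times}$ and $c\in S_{\mathfrak q'}^{\times}$ at every other prime; both are sound.
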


\begin{proof}
1. Since $R$ is a seminormal Mori domain we obtain that $\wh{R}$ is a Krull domain by \cite[Theorem 2.9]{Ba94}. Since $R$ is one-dimensional it follows by \cite[Proposition 2.10.5.1(c)]{Ge-HK06a} that $\wh{R}$ is one-dimensional, hence $\wh{R}$ is a Dedekind domain. Moreover, $\wh{S}$ is an overring of $\wh{R}$, hence $\wh{S}$ is a Dedekind domain. Let ${\mathfrak q}\in {\rm{spec}}(S)^{\bullet}$ and ${\mathfrak p}={\mathfrak q}\cap R$. Then ${\mathfrak p}\in\max(R)$. Furthermore, $R_{\mathfrak p}\subset S_{\mathfrak q}$, and thus $\wh{R_{\mathfrak p}}\subset\wh{S_{\mathfrak q}}$. Since $R_{\mathfrak p}$ is primary, there is some $x\in R_{\mathfrak p}^{\bullet}$ such that $K=R_{\mathfrak p}[x^{-1}]$, hence $K=S_{\mathfrak q}[x^{-1}]$. Assume that $\wh{S_{\mathfrak q}}=K$. Then there is some $c\in S_{\mathfrak q}^{\bullet}$ such that $cK=cS_{\mathfrak q}[x^{-1}]\subset S_{\mathfrak q}$. Consequently, $y=c\frac{y}{c}\in cK\subset S_{\mathfrak q}$ for all $y\in K$, and thus $S_{\mathfrak q}=K$, a contradiction. This implies that $\wh{R_{\mathfrak p}}\subset\wh{S_{\mathfrak q}}\subsetneq K$ and since $\wh{R_{\mathfrak p}}$ is a discrete valuation domain it follows that $\wh{S_{\mathfrak q}}=\wh{R_{\mathfrak p}}$ is a discrete valuation domain.

\smallskip
2. Let ${\mathfrak q}\in {\rm{spec}}(S)^{\bullet}$ be such that $S_{\mathfrak q}$ is archimedean and ${\mathfrak p}={\mathfrak q}\cap R$. It follows by 1. that $R_{\mathfrak p}\subset S_{\mathfrak q}\subset\wh{S_{\mathfrak q}} = \wh{R_{\mathfrak p}}$. Therefore, Lemma \ref{3.2}.4 implies that $S_{\mathfrak q}$ is a seminormal primary Mori domain.

\smallskip
3. $\subset$: Let ${\mathfrak m}\in\max(\wh{S})$ and ${\mathfrak q}={\mathfrak m}\cap S$. Observe that $S_{\mathfrak q}\subset\wh{S}_{\mathfrak m}$, and ${\mathfrak m}_{\mathfrak m}\cap S_{\mathfrak q}\subset {\mathfrak q}_{\mathfrak q}$ and ${\mathfrak q}_{\mathfrak q}=(S\setminus {\mathfrak q})^{-1}{\mathfrak q}\subset (\wh{S}\setminus {\mathfrak m})^{-1}{\mathfrak m}={\mathfrak m}_{\mathfrak m}$. Therefore, ${\mathfrak q}_{\mathfrak q}={\mathfrak m}_{\mathfrak m}\cap S_{\mathfrak q}$, hence $S_{\mathfrak q}\cap\wh{S}_{\mathfrak m}^{\times}=S_{\mathfrak q}^{\times}$. Since $\wh{S_{\mathfrak q}}\subset\wh{S}_{\mathfrak m}\subsetneq K$ and $\wh{S_{\mathfrak q}}$ is a discrete valuation domain, we have $\wh{S_{\mathfrak q}}=\wh{S}_{\mathfrak m}$, and thus $S_{\mathfrak q}\cap\wh{S_{\mathfrak q}}^{\times}=S_{\mathfrak q}\cap\wh{S}_{\mathfrak m}^{\times}=S_{\mathfrak q}^{\times}$, which implies that $S_{\mathfrak q}$ is archimedean. Consequently, $S_{\mathfrak q}$ is primary by 2., hence ${\mathfrak q}\in\mathfrak{X}(S)$.

$\supset$: Let ${\mathfrak q}\in\mathfrak{X}(S)$ and ${\mathfrak m}=\wh{S_{\mathfrak q}}\setminus\wh{S_{\mathfrak q}}^{\times}\cap\wh{S}$. Then ${\mathfrak m}\in\max(\wh{S})$. Since $S_{\mathfrak q}$ is primary, it is archimedean and hence it follows by 2. that $S_{\mathfrak q}$ is seminormal. Therefore, $(S_{\mathfrak q} \DP \wh{S_{\mathfrak q}})$ is a radical ideal of $S_{\mathfrak q}$ and of $\wh{S_{\mathfrak q}}$ and $(S_{\mathfrak q} \DP \wh{S_{\mathfrak q}})\not=\{0\}$ by Lemma \ref{3.2}.1. Since $S_{\mathfrak q}$ and $\wh{S_{\mathfrak q}}$ are primary this implies that ${\mathfrak q}_{\mathfrak q}=S_{\mathfrak q}\setminus S_{\mathfrak q}^{\times}=(S_{\mathfrak q} \DP \wh{S_{\mathfrak q}})=\wh{S_{\mathfrak q}}\setminus\wh{S_{\mathfrak q}}^{\times}$. Consequently, ${\mathfrak m}\cap S={\mathfrak q}_{\mathfrak q}\cap S={\mathfrak q}$.\\
Let $x\in S^{\bullet}$. It follows that $\varphi \colon \{{\mathfrak m}\in\max(\wh{S})\mid x\in {\mathfrak m}\}\rightarrow\{{\mathfrak q}\in\mathfrak{X}(S)\mid x\in {\mathfrak q}\}$ defined by $\varphi({\mathfrak m})={\mathfrak m}\cap S$ is a surjective map. Since $\wh{S}$ is a Dedekind domain we have $\{{\mathfrak m}\in\max(\wh{S})\mid x\in {\mathfrak m}\}$ is finite, and thus $\{{\mathfrak q}\in\mathfrak{X}(S)\mid x\in {\mathfrak q}\}$ is finite.

\smallskip
4. Let $(R \DP \wh{R})\not=\{0\}$. First we show that $(R \DP \wh{R})\subset (S_{\mathfrak q} \DP \wh{S}_{\mathfrak q})$ for all ${\mathfrak q}\in\max(S)$. Let ${\mathfrak q}\in\max(S)$ and ${\mathfrak p}={\mathfrak q}\cap R$. Since $R$ is a Mori domain and $\wh{S}_{\mathfrak q}\subset\wh{S_{\mathfrak q}}$ it follows by 1. that $(R \DP \wh{R})\subset (R_{\mathfrak p} \DP \wh{R_{\mathfrak p}})\subset (S_{\mathfrak q} \DP \wh{S_{\mathfrak q}})\subset (S_{\mathfrak q} \DP \wh{S}_{\mathfrak q})$. Let $x\in\bigcap_{{\mathfrak m}\in\max(S)} (S_{\mathfrak m} \DP \wh{S}_{\mathfrak m})$. Then $x\wh{S}_{\mathfrak m}\subset S_{\mathfrak m}$ for all ${\mathfrak m}\in\max(S)$. Therefore, $x\wh{S}=x\bigcap_{{\mathfrak m}\in\max(S)}\wh{S}_{\mathfrak m}=\bigcap_{{\mathfrak m}\in\max(S)} x\wh{S}_{\mathfrak m}\subset\bigcap_{{\mathfrak m}\in\max(S)} S_{\mathfrak m}=S$. This implies that $x\in (S \DP \wh{S})$. Consequently, $(R \DP \wh{R})\subset\bigcap_{{\mathfrak m}\in\max(S)} (S_{\mathfrak m} \DP \wh{S}_{\mathfrak m})\subset (S \DP \wh{S})$, and thus $(S \DP \wh{S})\not=\{0\}$.

\smallskip
5. We use freely that $R$ is primary if and only if $R$ is one-dimensional and local (Lemma \ref{3.3}.2.

(a)\, $\Rightarrow$\, (b) It follows by 2. that $S_{\mathfrak q}$ is a Mori domain for all ${\mathfrak q}\in\mathfrak{X}(S)$. By 3., we infer that $S$ is a weakly Krull domain and hence Mori by Lemma \ref{5.1}.1. Consequently, $S_{\mathfrak m}$ is a Mori domain for all ${\mathfrak m}\in\max(S)$. This implies that $S_{\mathfrak m}$ is archimedean for all ${\mathfrak m}\in\max(S)$.

(b)\, $\Rightarrow$\, (c) It follows by 2. that $S_{\mathfrak q}$ is a seminormal primary Mori domain for all ${\mathfrak q}\in\max(S)$. It follows immediately that $S$ is seminormal and one-dimensional. Together with 3. we have $S$ is a Mori domain.

(c)\, $\Rightarrow$\, (a) Trivial, since $S$ is one-dimensional.

\smallskip
6. By Lemma \ref{3.3}.3, $R_{\mathfrak p}^{\bullet}$ is seminormal and finitely primary, and by 1. we have $R_{\mathfrak p}\subset S_{\mathfrak q}\subset\wh{R_{\mathfrak p}}$. Consequently, $\mathcal{A}(R_{\mathfrak p})=\mathcal{A}(S_{\mathfrak q})$ by Lemma \ref{3.5}.4. Next we show that $(wR_{\mathfrak p}\cap R)S=wS_{\mathfrak q}\cap S$ for all $w\in\mathcal{A}(R_{\mathfrak p})$. Let $w\in\mathcal{A}(R_{\mathfrak p})$. It suffices to prove that $(wR_{\mathfrak p}\cap R)S_{\mathfrak n}=(wS_{\mathfrak q}\cap S)S_{\mathfrak n}$ for all ${\mathfrak n}\in\max(S)$. Let ${\mathfrak n}\in\max(S)$. Observe that $wR_{\mathfrak p}\cap R$ is ${\mathfrak p}$-primary and $wS_{\mathfrak q}\cap S$ is ${\mathfrak q}$-primary. Therefore, $(wR_{\mathfrak p}\cap R)R_{\mathfrak a}=R_{\mathfrak a}$ for all ${\mathfrak a}\in\max(R)\setminus\{{\mathfrak p}\}$ and $(wS_{\mathfrak q}\cap S)S_{\mathfrak b}=S_{\mathfrak b}$ for all ${\mathfrak b}\in\max(S)\setminus\{{\mathfrak q}\}$.

\smallskip
\noindent CASE 1: \ ${\mathfrak n}\not={\mathfrak q}$.

Assume that ${\mathfrak n}\cap R={\mathfrak p}$. It follows by 1. that $\wh{R_{\mathfrak p}}=\wh{S_{\mathfrak q}}=\wh{S_{\mathfrak n}}$ is a discrete valuation domain. Since $\wh{R_{\mathfrak p}}\setminus\wh{R_{\mathfrak p}}^{\times}\cap S_{\mathfrak q}={\mathfrak q}_{\mathfrak q}$ and $\wh{R_{\mathfrak p}}\setminus\wh{R_{\mathfrak p}}^{\times}\cap S_{\mathfrak n}={\mathfrak n}_{\mathfrak n}$, we obtain that ${\mathfrak q}={\mathfrak q}_{\mathfrak q}\cap S=\wh{R_{\mathfrak p}}\setminus\wh{R_{\mathfrak p}}^{\times}\cap S={\mathfrak n}_{\mathfrak n}\cap S={\mathfrak n}$, a contradiction. Therefore, ${\mathfrak n}\cap R\not={\mathfrak p}$, hence $(wR_{\mathfrak p}\cap R)S_{\mathfrak n}=(wR_{\mathfrak p}\cap R)R_{{\mathfrak n}\cap R}S_{\mathfrak n}=R_{{\mathfrak n}\cap R}S_{\mathfrak n}=S_{\mathfrak n}=(wS_{\mathfrak q}\cap S)S_{\mathfrak n}$.

\smallskip
\noindent CASE 2: \ ${\mathfrak n}={\mathfrak q}$.

We have $(wR_{\mathfrak p}\cap R)S_{\mathfrak q}=(wR_{\mathfrak p}\cap R)R_{\mathfrak p}S_{\mathfrak q}=wR_{\mathfrak p}S_{\mathfrak q}=wS_{\mathfrak q}=(wS_{\mathfrak q}\cap S)S_{\mathfrak q}$.\\
Now let
$u,v\in\mathcal{A}(S_{\mathfrak q})$. Then there is some $c\in K^{\bullet}$ such that $uR_{\mathfrak p}\cap R=c(vR_{\mathfrak p}\cap
R)$, hence $uS_{\mathfrak q}\cap S=(uR_{\mathfrak p}\cap R)S=c(vR_{\mathfrak p}\cap R)S=c(vS_{\mathfrak q}\cap S)$.
\end{proof}

\medskip
\begin{proposition} \label{6.4}
Let $R$ be a  half-factorial seminormal one-dimensional Mori domain with $\{0\} \ne \mathfrak{f}=(R \DP
\wh{R}) \subsetneq R$. Suppose that  $\Pic(R)$ is finite and that every class contains a maximal ideal ${\mathfrak p}$ with ${\mathfrak p}
\not\supset \mathfrak{f}$. Let $K$ be a quotient field of $R$ and $R\subset S\subsetneq K$ an
intermediate ring.
\begin{enumerate}
\item  If the equivalent conditions of Proposition \ref{6.3}.5 are satisfied, then $S$ is
half-factorial.

\smallskip
\item If the integral closure $\overline R$ equals the complete integral closure $\wh R$, then the equivalent conditions of Proposition \ref{6.3}.5 are satisfied and $S$ is
half-factorial.
\end{enumerate}
\end{proposition}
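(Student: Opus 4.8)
The plan is to run everything through the half-factoriality criterion of Theorem~\ref{6.2} for $R$, to transport the relevant class-group condition from $R$ to $S$ by means of Proposition~\ref{6.3} and Corollary~\ref{L:CharClsIso}, and finally to compare $S$ with its complete integral closure. First I would record what half-factoriality of $R$ yields. Since $R$ is a seminormal one-dimensional Mori domain with $\{0\}\ne\mathfrak f\subsetneq R$, finite $\Pic(R)=\mathcal C_v(R)$, and every class containing a $\mathfrak p\not\supset\mathfrak f$, it satisfies the hypotheses of Theorem~\ref{6.2}; the implication (b)$\Rightarrow$(c) there gives $|\Pic(R)|\le 2$ and that both $\pi$ and $\overline\vartheta\colon\mathcal C_v(R)\to\mathcal C_v(\wh R)$ are bijective. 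By Lemma~\ref{5.1}.3 and Lemma~\ref{3.3} bijectivity of $\pi$ means each $R_{\mathfrak p}^{\bullet}$ is finitely primary of rank one, so $\wh{R_{\mathfrak p}}$ is a discrete valuation domain and the hypotheses of Proposition~\ref{6.3} hold; in particular all parts of Proposition~\ref{6.3} are at our disposal.

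For part~1 the key is that the isomorphism property of $\overline\vartheta$ is exactly condition~(e) of Corollary~\ref{L:CharClsIso}: for every $\mathfrak p\in\max(R)$ and all $u,v\in\mathcal A(R_{\mathfrak p})$ there is some $c\in K^{\bullet}$ with $uR_{\mathfrak p}\cap R=c(vR_{\mathfrak p}\cap R)$. Assuming the equivalent conditions of Proposition~\ref{6.3}.5, Proposition~\ref{6.3}.6 then propagates precisely this condition from each $\mathfrak p=\mathfrak q\cap R$ to every $\mathfrak q\in\max(S)$, so condition~(e) of Corollary~\ref{L:CharClsIso} holds for $S$ as well. By Proposition~\ref{6.3}.1 each $S_{\mathfrak q}^{\bullet}$ is finitely primary of rank one, by Proposition~\ref{6.3}.5 $S$ is a seminormal one-dimensional Mori domain, and $(S\DP\wh S)\ne\{0\}$ by Proposition~\ref{6.3}.4. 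If $(S\DP\wh S)\subsetneq S$, then Corollary~\ref{L:CharClsIso} applied to $S$ upgrades condition~(e) back to the statement that $\overline\vartheta_S$ is an isomorphism, and Remark~\ref{R:GenHF} shows that $S\hookrightarrow\wh S$ is a transfer homomorphism, whence $\mathcal L(S)=\mathcal L(\wh S)$ by Lemma~\ref{4.1}.1; if instead $S=\wh S$, then $S$ is Krull and $\mathcal L(S)=\mathcal L(\wh S)$ trivially. It remains to see that $\wh S$ is half-factorial: it is a Dedekind overring of $\wh R$, so $\Pic(\wh R)$ surjects onto $\Pic(\wh S)$ and hence $|\Pic(\wh S)|\le|\Pic(\wh R)|=|\mathcal C_v(R)|\le 2$, using that $\overline\vartheta$ is an isomorphism. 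A Krull domain whose class group has order at most two is half-factorial, so $\wh S$, and therefore $S$, is half-factorial.

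For part~2 I would only need to verify one of the equivalent conditions of Proposition~\ref{6.3}.5, say condition~(b), and then invoke part~1. Fix $\mathfrak q\in\max(S)$ and set $\mathfrak p=\mathfrak q\cap R\in\max(R)$. Since integral closure commutes with localization and $\overline R=\wh R$, we get $\overline{R_{\mathfrak p}}=(\overline R)_{\mathfrak p}=(\wh R)_{\mathfrak p}=\wh{R_{\mathfrak p}}$, while Proposition~\ref{6.3}.1 gives $R_{\mathfrak p}\subseteq S_{\mathfrak q}\subseteq\wh{S_{\mathfrak q}}=\wh{R_{\mathfrak p}}=\overline{R_{\mathfrak p}}$. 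Thus $S_{\mathfrak q}$ lies between $R_{\mathfrak p}$ and its integral closure, hence is integral over the one-dimensional local domain $R_{\mathfrak p}$ and is therefore itself a one-dimensional local domain with quotient field $K$. By Lemma~\ref{3.3}.2 its monoid $S_{\mathfrak q}^{\bullet}$ is primary and in particular archimedean, which is condition~(b). Consequently the conditions of Proposition~\ref{6.3}.5 hold and part~1 applies.

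The main obstacle I anticipate is the central chain of equivalences in part~1 --- identifying the isomorphism of $\overline\vartheta$ with condition~(e) of Corollary~\ref{L:CharClsIso}, transporting it to $S$ via Proposition~\ref{6.3}.6, and then reading it backwards for $S$ --- since this is where seminormality and the precise bookkeeping of the ideals $uH_{\mathfrak p}\cap H$ enter. A secondary point requiring care is the half-factoriality of $\wh S$: one must split off the degenerate case $S=\wh S$, in which the transfer machinery of Remark~\ref{R:GenHF} does not directly apply because the conductor is not proper, and invoke the standard fact that the divisor class group of a Dedekind overring is a homomorphic image of the original.
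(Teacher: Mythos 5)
Your proposal is correct and follows essentially the same route as the paper's proof: transport the class-group condition of Corollary \ref{L:CharClsIso} from $R$ to $S$ via Proposition \ref{6.3}.6, obtain the transfer homomorphism $S^{\bullet}\hookrightarrow\wh S^{\bullet}$ from Remark \ref{R:GenHF}, and conclude from $|\Pic(\wh S)|\le|\Pic(\wh R)|\le 2$ (the paper makes the surjection $\Pic(\wh R)\to\Pic(\wh S)$ precise via Gilmer's theorem on overrings of Dedekind domains with torsion class group together with Nagata's theorem). Your treatment of part 2 (integrality of $S_{\mathfrak q}$ over $R_{\mathfrak p}$, hence one-dimensional local, hence primary and archimedean) is a harmless variant of the paper's direct verification that $\wh{S_{\mathfrak q}}=\overline{S_{\mathfrak q}}$ forces $\wh{S_{\mathfrak q}}^{\times}\cap S_{\mathfrak q}=S_{\mathfrak q}^{\times}$, and your explicit handling of the degenerate case $S=\wh S$ is a point of care the paper leaves implicit.
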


\begin{proof}
Since $R$ is a one-dimensional Mori domain, the $v$-class group $\mathcal C_v (R)$ coincides with the Picard group $\Pic (R)$.

\smallskip
1. By Proposition \ref{6.3}.1, $\wh{R}\subset\wh{S}$ are Dedekind domains. By Theorem \ref{6.2}, the class groups of $R$ and $\wh{R}$ are isomorphic. In particular we conclude,
that the class group of $\wh{R}$ is a torsion group. Hence (see for example \cite[Theorem
40.3]{Gi92}) $\wh{S}=T^{-1}\wh{R}$ for some submonoid $T\subset \wh{R}^\bullet$. By a Theorem of Nagata (\cite[Corollary 7.2]{Fo73}), the class group of $\wh{S}$ is a quotient of the class group of $\wh{R}$. By Theorem \ref{6.2}, we know $| \Pic (\wh{R})| \le 2$, so that
$\vert \Pic (\wh{S}) \vert \leq 2$ holds true, too. Therefore, $\wh{S}$ is half-factorial.

By Proposition \ref{6.3}.6 and Corollary \ref{L:CharClsIso},  the homomorphism
$\Pic (S)\rightarrow \Pic (\wh{S})$ is an isomorphism. By Proposition \ref{6.3} and Remark \ref{R:GenHF},
the inclusion $S^\bullet \hookrightarrow \wh{S}^\bullet$ is a transfer homomorphism, which implies that $S$ is
half-factorial, too.

\smallskip
2. We show that $S_{\mathfrak q}$ is archimedean for all ${\mathfrak q}\in\max(S)$. Let ${\mathfrak q}\in\max(S)$ and ${\mathfrak p}={\mathfrak q}\cap R$. Since $R$ is a Mori domain we obtain that $\overline{R_{\mathfrak p}}=\overline{R}_{\mathfrak p}=\wh{R}_{\mathfrak p}=\wh{R_{\mathfrak p}}$. Consequently, Proposition \ref{6.3}.1 implies that $\wh{R_{\mathfrak p}}=\overline{R_{\mathfrak p}}\subset\overline{S_{\mathfrak q}}\subset\wh{S_{\mathfrak q}}=\wh{R_{\mathfrak p}}$, hence $\wh{S_{\mathfrak q}}=\overline{S_{\mathfrak q}}$. Consequently, $\wh{S_{\mathfrak q}}^{\times}\cap S_{\mathfrak q}=\overline{S_{\mathfrak q}}^{\times}\cap S_{\mathfrak q}=S_{\mathfrak q}^{\times}$, and thus $S_{\mathfrak q}$ is archimedean.
\end{proof}

\medskip
In the following examples we provide non-noetherian examples for rings in Proposition \ref{6.4}, and we outline the necessity of the assumptions there.

\medskip
\begin{examples} \label{6.5}~

1. Let $K$ be a field, $Y$ an indeterminate over $K$ and $X$ an indeterminate over $K(Y)$. Set $R=K+X(K(Y))[\![X]\!]$, $S=K[Y^2,Y^3]+X(K(Y))[\![X]\!]$ and $T=K[Y]+X(K(Y))[\![X]\!]$. Then $R$ is an integrally closed primary Mori domain with $(R \DP \wh{R})\not=\{0\}$ and $\wh{R}$ is a discrete valuation domain. Moreover, $R\subset S\subset T\subset\wh{R}$ are intermediate rings, $S$ is neither seminormal nor archimedean and $T$ is integrally closed and yet not archimedean.

\smallskip
2. Let $K$ be a field, $Y$ an indeterminate over $K$, $L$ a field of quotients of $K[\![Y]\!]$ and $X$ an indeterminate over $L$. Set $R=K[\![Y]\!]+XL[\![X]\!]$. Then $R$ is a non-archimedean integrally closed domain with  $(R \DP \wh{R})\not=\{0\}$, $|{\rm{spec}}(R)|=3$, and $\wh{R}$ is a discrete valuation domain. In particular, $R$ is a seminormal $G$-domain whose complete integral closure is a Krull domain and yet $R$ is not a Mori domain.

\smallskip
3.
Let $K$ be a field, $L/K$ an algebraic field extension such that $[L:K]=\infty$, $X$ an indeterminate over $L$ and $R=K+XL[X]$. Then $R$ is a weakly factorial half-factorial seminormal one-dimensional Mori domain with $\mathfrak{f}=(R \DP \wh{R})\not=\{0\}$, $\overline{R}=\wh{R}$ and yet $R$ is not noetherian. Moreover, $\Pic(R)$ is trivial and every class of $\Pic(R)$ contains a $\mathfrak{p}\in\max(R)$ such that $\mathfrak{f}\not\subset\mathfrak{p}$.
\end{examples}

\medskip
\noindent
{\bf Acknowledgements.} We would like to thank Franz Halter-Koch and the anonymous referee. Both have read a previous version of the manuscript extremely carefully and have provided us with many helpful comments. A special thanks to Franz Halter-Koch for his help with Example \ref{5.7}.(7).

\providecommand{\bysame}{\leavevmode\hbox to3em{\hrulefill}\thinspace}
\providecommand{\MR}{\relax\ifhmode\unskip\space\fi MR }
\providecommand{\MRhref}[2]{%
  \href{http://www.ams.org/mathscinet-getitem?mr=#1}{#2}
}
\providecommand{\href}[2]{#2}

\end{document}